%%%%%%%%%%%%%%%%%%%%%%%%%%%%%%%%%%%%%%%%%%%%%%%%%%%%%%%%%%%%%%%%%%%%%%
%%%%%%%%%%%%%%%%%%%%%%%%%%%%%%%%%%%%%%%%%%%%%%%%%%%%%%%%%%%%%%%%%%%%%%
\documentclass{aims}
\usepackage{amsmath}
\usepackage{paralist}
\usepackage{graphics}
\usepackage{epsfig}
\usepackage{graphicx}
\usepackage{epstopdf}
\usepackage[colorlinks=true]{hyperref}
\hypersetup{urlcolor=blue, citecolor=red}

% to check never-quoted references
%\usepackage{refcheck}

\textheight=8.2 true in
\textwidth=5.0 true in
\topmargin 30pt
\setcounter{page}1

\newtheorem{Thm}{Theorem}
\newtheorem{Cor}[Thm]{Corollary}
\newtheorem{Lem}[Thm]{Lemma}
\newtheorem{Prop}[Thm]{Proposition}
\newtheorem{Rem}[Thm]{Remark}
\newtheorem*{Def}{Definition}

\newcommand{\C}{{\mathsf C}}
\newcommand{\R}{{\mathbb R}}
\renewcommand{\S}{{\mathbb S}}

\renewcommand{\L}{\mathrm L}

\newcommand{\be}[1]{\begin{equation}\label{#1}}
\newcommand{\ee}{\end{equation}}
\renewcommand{\(}{\left(}
\renewcommand{\)}{\right)}

\newcommand{\nrm}[2]{\|{#1}\|_{\L^{#2}(\R^d)}}

\newcommand{\LL}{\mathrm L}

\newcommand{\M}{\mathfrak M}
\newcommand{\iM}[1]{\int_{\M}{#1}\,d\kern1pt v_g}

\newcommand{\irdsph}[3]{\int_0^\infty\kern-5pt\int_{\S^{d-1}}#2\,{#1}^{\kern1pt #3}\,\frac{d{#1}}{{#1}}\,d\omega}
\newcommand{\D}[1]{\mathsf D_\alpha\kern1pt#1}

\newcommand{\rate}{\rho}
\newcommand{\dy}{\,dy}

\newcommand{\RR}{{\mathbb R}}
\newcommand{\dx}{\,dx}
\newcommand{\rd}{d}
\newcommand{\dt}{\,dt}

 % constant in Hardy-Poincare ineq.
 % Nonlinear Entropy
 % Entropy linearized
\newcommand{\FL}{\mathsf{F}} % Free energy linearized
\newcommand{\I}{\mathcal{I}} % Nonlinear Fisher information
\newcommand{\IL}{\mathsf{I}} % Fisher information linearized
\DeclareMathOperator*{\osc}{\rm osc}
\def\qed{\,\unskip\kern 6pt \penalty 500 \raise -2pt\hbox{\vrule \vbox to8pt{\hrule width 6pt \vfill\hrule}\vrule}\par}

\definecolor{darkblue}{rgb}{0.05, .05, .65}
\definecolor{darkgreen}{rgb}{0.1, .65, .1}
\definecolor{darkred}{rgb}{0.8,0,0}

\newcommand{\ttheta}{\vartheta}
\newcommand{\varth}{\theta}
\newcommand{\etanu}{\nu}
\newcommand{\nueta}{\eta}
\newcommand{\scaling}{\mu}

%%%%%%%%%%%%%%%%%%%%%%%%%%%%%%%%%%%%%%%%%%%%%%%%%%%%%%%%%%%%%%%%%%%%%%
%%%%%%%%%%%%%%%%%%%%%%%%%%%%%%%%%%%%%%%%%%%%%%%%%%%%%%%%%%%%%%%%%%%%%%
\begin{document}
\title[Weighted fast diffusion II]{Weighted fast diffusion equations (Part II):\\ Sharp asymptotic rates of convergence\\ in relative error by entropy methods}
\author[M.~Bonforte, J.~Dolbeault, M.~Muratori and B.~Nazaret]{}

\date{\today}

\subjclass{Primary: 35K55, 35B40, 49K30; Secondary: 26D10, 35B06, 46E35, 49K20, 35J20.}
\keywords{Fast diffusion equation, self-similar solutions, asymptotic behavior, intermediate asymptotics, rate of convergence, entropy methods, free energy, Caffarelli-Kohn-Nirenberg inequalities, Hardy-Poincar\'e inequalities, weights, optimal functions, best constants, symmetry breaking, linearization, spectral gap, Harnack inequality, parabolic regularity.}

\email{matteo.bonforte@uam.es}
\email{dolbeaul@ceremade.dauphine.fr}
\email{matteo.muratori@unipv.it}
\email{bruno.nazaret@univ-paris1.fr}

\thanks{$^*$ Corresponding author.}

\maketitle
\thispagestyle{empty}
\vspace*{-0.45cm}

\centerline{\scshape Matteo Bonforte}
\smallskip {\footnotesize
\centerline{Departamento de Matem\'{a}ticas,}
\centerline{Universidad Aut\'{o}noma de Madrid,}
\centerline{Campus de Cantoblanco, 28049 Madrid, Spain}
}\medskip

\centerline{\scshape Jean Dolbeault $^*$}
\smallskip {\footnotesize
\centerline{Ceremade, UMR CNRS n$^\circ$~7534,}
\centerline{Universit\'e Paris-Dauphine, PSL Research University,}
\centerline{Place de Lattre de Tassigny, 75775 Paris Cedex~16, France}
}\medskip

\centerline{\scshape Matteo Muratori}
\smallskip {\footnotesize
\centerline{Dipartimento di Matematica \emph{Felice Casorati},}
\centerline{Universit\`a degli Studi di Pavia,}
\centerline{Via A.~Ferrata 5, 27100 Pavia, Italy}
}\medskip

\centerline{\scshape Bruno Nazaret}
\smallskip {\footnotesize
\centerline{SAMM,}
\centerline{Universit\'e Paris 1,}
\centerline{90, rue de Tolbiac, 75634 Paris Cedex~13, France}
}

%%%%%%%%%%%%%%%%%%%%%%%%%%%%%%%%%%%%%%%%%%%%%%%%%%%%%%%%%%%%%%%%%%%%%%
%%%%%%%%%%%%%%%%%%%%%%%%%%%%%%%%%%%%%%%%%%%%%%%%%%%%%%%%%%%%%%%%%%%%%%
\begin{abstract}\vspace*{-0.25cm} This paper is the second part of the study. In Part~I, self-similar solutions of a weighted fast diffusion equation (WFD) were related to optimal functions in a family of subcritical Caffarelli-Kohn-Nirenberg inequalities (CKN) applied to radially symmetric functions. For these inequalities, the linear instability (symmetry breaking) of the optimal radial solutions relies on the spectral properties of the linearized evolution operator. Symmetry breaking in (CKN) was also related to large-time asymptotics of (WFD), at formal level. A first purpose of Part~II is to give a rigorous justification of this point, that is, to determine the asymptotic rates of convergence of the solutions to (WFD) in the symmetry range of (CKN) as well as in the symmetry breaking range, and even in regimes beyond the supercritical exponent in (CKN). Global rates of convergence with respect to a free energy (or entropy) functional are also investigated, as well as uniform convergence to self-similar solutions in the strong sense of the relative error. Differences with large-time asymptotics of fast diffusion equations without weights are emphasized.
\end{abstract}

%%%%%%%%%%%%%%%%%%%%%%%%%%%%%%%%%%%%%%%%%%%%%%%%%%%%%%%%%%%%%%%%%%%%%%
%%%%%%%%%%%%%%%%%%%%%%%%%%%%%%%%%%%%%%%%%%%%%%%%%%%%%%%%%%%%%%%%%%%%%%
\section{Introduction}\label{Sec:Intro}

Let us consider the \emph{fast diffusion equation with weights}
\be{FD}
u_t+|x|^\gamma\,\nabla\cdot\(\,|x|^{-\beta}\,u\,\nabla u^{m-1}\)=0\,,\quad(t,x)\in\R^+\times\R^d\,.
\ee
Such an equation admits the \emph{self-similar solution}
\[
u_\star(t,x)=\(\tfrac\rate t\)^{\rate\,(d-\gamma)}\,\(1+|(\tfrac\rate t)^\rate\,x|^{2+\beta-\gamma}\)^\frac1{m-1}\,,\quad\forall\,(t,x)\in\R^+\times\R^d\,,
\]
where $\tfrac1{\rate}=(d-\gamma)\,(m-m_c)$ with $m_c:=\tfrac{d-2-\beta}{d-\gamma}$. At least when $1-m>0$ is not too big, this self-similar solution attracts all solutions to~\eqref{FD} as $t\to\infty$, but we will also prove that, exactly as for the non-weighted equation corresponding to $(\beta,\gamma)=(0,0)$, there is a basin of attraction of $u_\star$ for any $m\in(0,1)$. However, there are many differences with respect to the non-weighted case, which will be summarized in Section~\ref{Sec:Conclusion}. To study the convergence of $u$ to $u_\star$, it is simpler to use \emph{self-similar variables} (see Section~\ref{sect: nonFP} for details) and consider the Fokker-Planck-type equation
\be{FD-FP}
v_t+|x|^\gamma\,\nabla\cdot\left[\,|x|^{-\beta}\,v\,\nabla\big(v^{m-1}-|x|^{2+\beta-\gamma}\big)\right]=0
\ee
with initial condition $v(t=0,\cdot)=v_0$. Self-similar solutions are transformed into Barenblatt-type stationary solutions given by
\[\label{Barenblatt-intro}
\mathfrak B(x):=\(C(M)+|x|^{2+\beta-\gamma}\)^\frac1{m-1}\,,
\]
where $C(M)$ is a positive constant uniquely determined by the \emph{weighted mass} condition
\[
\int_{\R^d}\mathfrak B\,\frac{\rd x}{|x|^\gamma}=M:=\int_{\R^d} v_0\,\frac{\rd x}{|x|^\gamma}\,,
\]
at least if $m\in(m_c,1)$. Altogether, what we aim at is establishing an \emph{exponential} convergence of $v$ to $\mathfrak B$ as $t\to\infty$, when the corresponding distance is measured in terms of the \emph{free energy} (which is sometimes called \emph{generalized relative entropy} in the literature)
\be{eq: Entropy-intro}
\mathcal F[v]:=\frac1{m-1}\int_{\R^d}\left[ v^m-\mathfrak B^m-m\,\mathfrak B^{m-1}\,(v-\mathfrak B) \right] \frac{\rd x}{|x|^\gamma}\,.
\ee
By evolving such free energy along the flow and differentiating with respect to $t$, we formally obtain that
\be{Eqn:DerivF}
\frac \rd{\rd t}\,\mathcal F[v(t)]=-\,\frac m{1-m}\,\mathcal I[v(t)]\,,
\ee
where $\mathcal I[v]$ denotes the \emph{relative Fisher information}
\[\label{eq: Fisher-intro}
\mathcal I[v]:=\int_{\R^d}v\left|\,\nabla v^{m-1}-\nabla\mathfrak B^{m-1}\right|^2\,\frac{\rd x}{|x|^\beta}\,.
\]
This will be proved rigorously in Section~\ref{sect: rig-ent}. Note that, with some abuse of notation, when we write $v(t)$ we mean the whole spatial profile of the function evaluated at time $t$.

\medskip Our goal is to relate $\mathcal F[v]$ and $\mathcal I[v]$, at least as $t\to\infty$, and for that we need a detour by a family of Caffarelli-Kohn-Nirenberg inequalities which have been introduced in~\cite{Caffarelli-Kohn-Nirenberg-84} and studied in Part I of this work,~\cite{BDMN2016a}. Let us explain this a bit more in detail. For all $q \ge 1$, consider the weighted norms
\[
\nrm w{q,\gamma}:=\(\int_{\RR^d}|w|^q\,|x|^{-\gamma}\,\rd x \)^{\frac1q}\quad\mbox{with}\quad\nrm wq:=\nrm w{q,0}\,,
\]
and define $\mathrm L^{q,\gamma}(\R^d)$ as the space of all measurable functions $w$ such that $\nrm w{q,\gamma}$ is finite. Actually, at some points below, we shall also make use of the above definition for $ q \in (0,1) $ (in which cases clearly $\nrm w{q,\gamma}$ is no more a norm). The \emph{Caffarelli-Kohn-Nirenberg interpolation inequalities}
\be{CKN}
\nrm w{2p,\gamma}\le\C_{\beta,\gamma,p}\,\nrm{\nabla w}{2,\beta}^\ttheta\,\nrm w{p+1,\gamma}^{1-\ttheta}
\ee
with $\ttheta:=\frac{(d-\gamma)\,(p-1)}{p\,[d+\beta+2-2\,\gamma-p\,(d-\beta-2)]}$ and parameters $\beta$, $\gamma$, $p$ subject to
\be{parameters-1}
d\ge2\,, \quad \gamma\in(-\infty,d)\,, \quad\gamma-2<\beta<\tfrac{d-2}d\,\gamma
\ee
and
\[\label{parameters-2}
p\in\(1,p_\star\right]\quad\mbox{with}\quad p_\star:=\tfrac{d-\gamma}{d-\beta-2}
\]
are valid for all functions in the space obtained by completion of $\mathcal{D}(\RR^d)$ with respect to the norm $\|\cdot\|$ defined by $w\mapsto\|w\|^2=\nrm{\nabla w}{2,\beta}^2+\nrm w{p+1,\gamma}^2$: see~\cite[Section~2.1]{BDMN2016a} for more details. We shall take for granted once for all assumptions~\eqref{parameters-1} on the parameters, even when it is not mentioned explicitly. However, the sub\-criticality condition $p\le p_\star$ will be assumed for global decay estimates, but not in the study of asymptotic decay estimates.

In~\eqref{CKN}, $\C_{\beta,\gamma,p}$ is meant to be the \emph{best constant}. In Part I of this study,~\cite{BDMN2016a}, we have showed that the equality case is achieved by $w_\star=\mathfrak B^{m-1/2}$ when $p=1/(2\,m-1)$ if \emph{symmetry} holds in~\eqref{CKN}, that is, if optimality is achieved among radial functions. However, we have proved in~\cite{BDMN2016a} that \emph{symmetry breaking} takes place if
\be{def-BFS}
\gamma<0\;\mbox{and}\;\beta_{\rm FS}(\gamma)<\beta<\tfrac{d-2}d\,\gamma\,,\;\mbox{with}\;\beta_{\rm FS}(\gamma):=d-2-\sqrt{(\gamma-d)^2-4\,(d-1)}\,.
\ee
In this case we have
\[
\C_{\beta,\gamma,p}^\star:=\nrm{\nabla w_\star}{2,\beta}^{-\ttheta}\,\nrm{w_\star}{p+1,\gamma}^{\ttheta-1}\,\nrm{w_\star}{2p,\gamma}<\C_{\beta,\gamma,p}\,.
\]

On the contrary, according to~\cite{DELM2015}, symmetry holds, so that $\C_{\beta,\gamma,p}=\C_{\beta,\gamma,p}^\star$, if
\[
0\le\gamma<d\,,\quad\mbox{or}\quad\gamma<0\quad\mbox{and}\quad\beta\le\beta_{\rm FS}\,.
\]
In this case, inequality~\eqref{CKN} can be written as an \emph{entropy -- entropy production} inequality
\[\label{Ineq:E-EP}
(2+\beta-\gamma)^2\,\mathcal F[v]\le\frac m{1-m}\,\mathcal I[v]
\]
and, as a straightforward consequence (recall~\eqref{Eqn:DerivF}), we formally deduce the convergence rate
\be{EntropyDecay}
\mathcal F[v(t)]\le\mathcal F[v(0)]\,e^{-\,2\,(1-m)\,\Lambda_\star\,t}\quad\forall\,t\ge0\quad\mbox{with}\quad\Lambda_\star:=\tfrac{(2+\beta-\gamma)^2}{2\,(1-m)}\,.
\ee
This connection is well known and goes back to~\cite{MR1940370}, where similar properties were investigated in the non-weighted case, namely for $(\beta,\gamma)=(0,0)$. For more details see~\cite[Proposition~1]{BDMN2016a}. Conditions have to be given to make Estimate~\eqref{EntropyDecay} rigorous. In fact much more is known. Let us consider again the \emph{entropy -- entropy production} inequality
\be{EP}
\mathcal K(M)\,\mathcal F[v]\le\mathcal I[v]\quad\forall\,v\in\L^{1,\gamma}(\R^d)\quad\mbox{such that}\quad \nrm v{1,\gamma}=M\,,
\ee
where $\mathcal K(M)$ is the best constant, which may possibly take the value $0$. With $\Lambda(M):=\frac m2\,(1-m)^{-2}\,\mathcal K(M)$, we formally deduce from~\eqref{Eqn:DerivF} and~\eqref{EP} that
\be{GlobalEntropyDecay}
\mathcal F[v(t)]\le\mathcal F[v(0)]\,e^{-\,2\,(1-m)\,\Lambda(M)\,t}\quad\forall\,t\ge0\,.
\ee
For brevity, this is what we shall call a \emph{global rate of decay} because the dependence in $v$ at time $t=0$ is explicitly given by $\mathcal F[v(0)]$. Let
\[
m_1:=\frac{2\,d-2-\beta-\gamma}{2\,(d-\gamma)}
\]
and denote by $\Lambda_{0,1}$ the lowest eigenvalue associated with non-radial eigenfuntions of the operator $\mathcal L$ defined by
\[
\mathcal L\,f:=|x|^\gamma\,\mathfrak B^{m-2}\,\nabla\cdot\(\,|x|^{-\beta}\,\mathfrak B\,\nabla f\)\,,
\]
where $\mathfrak B$ denotes the Barenblatt profile defined by
\be{Barenblatt-intro2}
\mathfrak B(x)=\(C+|x|^{2+\beta-\gamma}\)^\frac1{m-1}\quad\forall\,x\in\RR^d
\ee
for some $C>0$. Of course we shall later take $C=C(M)$ if $m\in(m_c,1)$. Based on variational methods, the following result has been proved in~\cite{BDMN2016a}. We shall also give a short additional proof of (i) below in Section~\ref{Sec:EPOpbFS}.
%---------------------------------------------------------------------
\begin{Thm}\label{Thm:BDMN-I} Let~\eqref{parameters-1} hold and $m\in[m_1,1)$. With the above notations, we have:
\begin{enumerate}
\item[(i)] For any $M>0$, if $\Lambda(M)=\Lambda_\star$ then $\beta=\beta_{\rm FS}(\gamma)$,
\item[(ii)] If $\beta>\beta_{\rm FS}(\gamma)$ then $\Lambda_{0,1}<\Lambda_\star$ and $\Lambda(M)\in(0,\Lambda_{0,1}]$ for any $M>0$,
\item[(iii)] For any $M>0$, if $\beta<\beta_{\rm FS}(\gamma)$ and $\gamma<0$, or if $0\le\gamma<d$, then $\Lambda(M)>\Lambda_\star$,
\end{enumerate}
where $\beta_{\rm FS}(\gamma) $ is given by~\eqref{def-BFS}.
\end{Thm}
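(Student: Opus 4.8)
The plan is to compare the best constant $\mathcal K(M)$ in the entropy–entropy production inequality~\eqref{EP} with the two natural candidate rates coming from linearization: the "radial" rate $\Lambda_\star$, which is the one produced by the Caffarelli–Kohn–Nirenberg inequality~\eqref{CKN} applied to radial functions via the entropy–entropy production formulation, and the "non-radial" spectral gap $\Lambda_{0,1}$, the lowest eigenvalue of $\mathcal L$ on the subspace of non-radial eigenfunctions. The first basic observation is that, by construction, $\mathcal K(M)\le (2+\beta-\gamma)^2 = 2\,(1-m)\,\Lambda_\star$ always holds, since $\mathfrak B^{m-1/2}$ (or a suitable perturbation of $\mathfrak B$) is admissible in~\eqref{EP}; hence $\Lambda(M)\le\Lambda_\star$ in general, and the content of (ii) and (iii) is to locate $\Lambda(M)$ relative to $\Lambda_\star$ and $\Lambda_{0,1}$ sharply, while (i) is a rigidity statement at the Felner–Schmid threshold.

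For part (i), I would argue by contradiction: if $\Lambda(M)=\Lambda_\star$ for some $M>0$, then~\eqref{EP} holds with the optimal constant $(2+\beta-\gamma)^2$, which is exactly the entropy–entropy production inequality $\eqref{Ineq:E-EP}$ with equality attainable in the limit. Equality (or the sharp constant) in~\eqref{Ineq:E-EP} forces the CKN inequality~\eqref{CKN} with $p=1/(2m-1)$ to be saturated by the radial Barenblatt function $w_\star=\mathfrak B^{m-1/2}$, i.e. symmetry holds with the radial optimizer actually optimal; but the linearization analysis from Part~I (the symmetry-breaking computation leading to $\beta_{\rm FS}(\gamma)$) shows that the radial optimizer is linearly stable — equivalently $\Lambda_{0,1}\ge\Lambda_\star$ — precisely when $\beta\le\beta_{\rm FS}(\gamma)$, and that on the threshold $\beta=\beta_{\rm FS}(\gamma)$ one has exactly $\Lambda_{0,1}=\Lambda_\star$. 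Since a strict inequality $\Lambda_{0,1}<\Lambda_\star$ (the case $\beta>\beta_{\rm FS}$) would give a non-radial perturbation lowering the quotient $\mathcal I/\mathcal F$ below $2\,(1-m)\,\Lambda_\star$, and $\Lambda_{0,1}>\Lambda_\star$ would be inconsistent with the upper bound $\mathcal K(M)\le(2+\beta-\gamma)^2$ being attained only at equality, the only possibility is $\beta=\beta_{\rm FS}(\gamma)$. The clean way to phrase this is: linearize the free energy and Fisher information around $\mathfrak B$; then $\mathcal F[\mathfrak B+\eps f]=\tfrac12\,\eps^2\,Q_{\mathcal F}[f]+o(\eps^2)$ and $\mathcal I[\mathfrak B+\eps f]=\tfrac12\,\eps^2\,Q_{\mathcal I}[f]+o(\eps^2)$, with the quotient $Q_{\mathcal I}/Q_{\mathcal F}$ minimized by the spectral decomposition of $\mathcal L$, giving $\inf Q_{\mathcal I}/Q_{\mathcal F}=2\,(1-m)\,\min\{\Lambda_\star,\Lambda_{0,1}\}$ over mass-preserving perturbations; so $\mathcal K(M)\le 2\,(1-m)\,\min\{\Lambda_\star,\Lambda_{0,1}\}$ and $\mathcal K(M)=2\,(1-m)\,\Lambda_\star$ forces $\Lambda_{0,1}\ge\Lambda_\star$, which together with $\Lambda_{0,1}\le\Lambda_\star$ (true in the symmetry-breaking and threshold regime) pins down $\beta=\beta_{\rm FS}(\gamma)$.

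For part (ii), assume $\beta>\beta_{\rm FS}(\gamma)$ (so in particular $\gamma<0$). The symmetry-breaking result from Part~I gives $\C_{\beta,\gamma,p}>\C_{\beta,\gamma,p}^\star$, and the linearization computation there yields the explicit strict inequality $\Lambda_{0,1}<\Lambda_\star$. Plugging a non-radial eigenfunction $f_{0,1}$ of $\mathcal L$ associated with $\Lambda_{0,1}$, orthogonal to the mass-change direction, into the linearized quotient shows that $\mathcal I[\mathfrak B+\eps f_{0,1}]/\mathcal F[\mathfrak B+\eps f_{0,1}]\to 2\,(1-m)\,\Lambda_{0,1}$ as $\eps\to0$, whence $\mathcal K(M)\le 2\,(1-m)\,\Lambda_{0,1}$, i.e. $\Lambda(M)\le\Lambda_{0,1}$; positivity $\Lambda(M)>0$ for $m\ge m_1$ comes from the variational lower bound on $\mathcal K(M)$ established in~\cite{BDMN2016a} (this is where the hypothesis $m\in[m_1,1)$ is used — it guarantees the functional setting in which the Barenblatt profile has finite entropy and the entropy–entropy production inequality is non-degenerate). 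For part (iii), assume $\beta<\beta_{\rm FS}(\gamma)$ with $\gamma<0$, or $0\le\gamma<d$: then by~\cite{DELM2015} symmetry holds, so $\C_{\beta,\gamma,p}=\C_{\beta,\gamma,p}^\star$, the entropy–entropy production inequality~\eqref{Ineq:E-EP} is valid with constant $(2+\beta-\gamma)^2$, hence $\mathcal K(M)\ge(2+\beta-\gamma)^2=2\,(1-m)\,\Lambda_\star$; combined with the linearization upper bound $\mathcal K(M)\le 2\,(1-m)\,\min\{\Lambda_\star,\Lambda_{0,1}\}$ and the fact that in this regime $\Lambda_{0,1}>\Lambda_\star$ (the complementary side of the Felner–Schmid computation), we would get $\mathcal K(M)=2\,(1-m)\,\Lambda_\star$. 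To get the \emph{strict} inequality $\Lambda(M)>\Lambda_\star$ claimed in (iii), one improves this: since symmetry holds strictly (the radial optimizer is the unique optimizer and the second variation is strictly positive in the non-radial directions with a gap $\Lambda_{0,1}-\Lambda_\star>0$), a finer argument — e.g. a Bakry–Émery–type carré du champ computation along the rescaled flow, or a quantitative stability version of~\eqref{Ineq:E-EP} — shows the entropy–entropy production constant is strictly larger than $(2+\beta-\gamma)^2$ for each fixed mass $M$; this is exactly the "improved" spectral gap phenomenon and is proved in~\cite{BDMN2016a} by the variational method.

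The main obstacle, I expect, is part (iii)'s \emph{strict} inequality: showing $\Lambda(M)>\Lambda_\star$ rather than merely $\Lambda(M)\ge\Lambda_\star$. The non-strict bound is immediate from symmetry plus~\eqref{Ineq:E-EP}, but strictness requires ruling out an optimizing sequence for~\eqref{EP} that concentrates on the radial Barenblatt asymptotics and degenerates; this is a compactness/rigidity issue, and the natural tool is either a careful analysis of the linearized operator's full spectrum (confirming the smallest eigenvalue relevant to mass-preserving perturbations is $2\,(1-m)\,\min\{\Lambda_\star,\Lambda_{0,1}\}$ and that $\Lambda_\star$ itself corresponds to a direction which — after using the mass constraint and scaling invariance — is not actually available), or a direct proof that equality in~\eqref{Ineq:E-EP} is never attained by a genuine function of finite mass. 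Getting the constant $m_1$ to be exactly the sharp threshold for positivity of $\mathcal K(M)$, and handling the functional-analytic subtleties near $m_c$, will also require care, but those are inherited from Part~I.
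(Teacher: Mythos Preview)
Your proposal rests on a false premise. The opening ``basic observation'' that $\Lambda(M)\le\Lambda_\star$ always holds directly contradicts part (iii), which asserts $\Lambda(M)>\Lambda_\star$ throughout the symmetry range. The justification you offer --- that $\mathfrak B^{m-1/2}$ or a perturbation thereof is admissible in~\eqref{EP} --- does not bound $\mathcal K(M)=\inf\mathcal I/\mathcal F$ from above, since at $v=\mathfrak B$ both $\mathcal I$ and $\mathcal F$ vanish. What linearization actually gives is $\Lambda(M)\le\Lambda$ with $\Lambda=\min\{\Lambda_{\rm ess},\Lambda_{0,1},\Lambda_{1,0}\}$ the spectral gap of Proposition~\ref{Prop:Spectrum}; crucially, $\Lambda_\star$ is \emph{not} in general an eigenvalue of $\mathcal L$ (it coincides with $\Lambda_{0,1}$ only on the curve $\beta=\beta_{\rm FS}(\gamma)$, as noted in Section~\ref{Sec:Conclusion}), so your claimed linearized bound $\mathcal K(M)\le 2\,(1-m)\,\min\{\Lambda_\star,\Lambda_{0,1}\}$ is not correct. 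There is also a systematic normalization slip: from $\Lambda(M)=\tfrac m{2(1-m)^2}\,\mathcal K(M)$ one has $\Lambda(M)=\Lambda_\star\iff\mathcal K(M)=\tfrac{1-m}m\,(2+\beta-\gamma)^2$, not $(2+\beta-\gamma)^2$.

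With the false upper bound removed, your argument for (i) collapses: you can no longer conclude $\Lambda_{0,1}\ge\Lambda_\star$ from $\Lambda(M)=\Lambda_\star$ alone, and (i) then reduces to a corollary of (ii) and (iii), whose full proofs lie in Part~I. The present paper's \emph{additional} proof of (i) in Section~\ref{Sec:EPOpbFS} follows a genuinely different, nonlinear route: it passes to the best-matching functionals $\mathcal G$ and $\mathcal J$ (which remove the radial scaling mode $\Lambda_{1,0}$ via the moment condition~\eqref{Eqn:MomentCondition}) and establishes a convex improvement $\mathcal J[v]\ge\Phi(\mathcal G[v])$ with $\Phi(0)=0$ and $\Phi'(0)=\tfrac{1-m}m\,(2+\beta-\gamma)^2$ by optimizing over scalings (Lemma~\ref{Lem:ImprovedScalings}). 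Convexity of $\Phi$ forces optimality in $\mathcal J\ge\Phi'(0)\,\mathcal G$ to occur only asymptotically, where linearization identifies the rate with $\Lambda_{0,1}$; this yields $\Lambda_\star\ge\Lambda_{0,1}$, and symmetry gives the reverse inequality, pinning down $\beta=\beta_{\rm FS}(\gamma)$. This scaling-based convex improvement is the key ingredient your pure-linearization outline misses. (Minor point: the subscript FS stands for Felli--Schneider, not ``Felner--Schmid''.)
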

%---------------------------------------------------------------------
We shall assume that the initial datum~$v_0$ is sandwiched between two Barenblatt profiles: \emph{there exist two positive constants $C_1$ and $C_2$ such that}
\be{Ineq:sandwiched}
\mathfrak B_1(x):=\(C_1+|x|^{2+\beta-\gamma}\)^\frac1{m-1} \le v_0(x)\le\(C_2+|x|^{2+\beta-\gamma}\)^\frac1{m-1}=:\mathfrak B_2(x) \ \ \forall\,x\in\R^d\,.
\ee
Condition~\eqref{Ineq:sandwiched} may look rather restrictive, but it is probably not, because it is expected that the condition is satisfied, for some positive $t$, by any solution with nonnegative initial datum having finite initial free energy, as it is the case when $(\beta,\gamma)=(0,0)$ and for $m$ sufficiently close to~$1$: see for instance~\cite{BV-JFA,BBDGV}. However, initial regularization effects of~\eqref{FD-FP} are out of the scope of the present paper. If $m$ is not  close to~$1$, Condition~\eqref{Ineq:sandwiched} determines, to some extent, the basin of attraction of $\mathfrak B$. What we shall prove is the following result on \emph{global rates}.
%---------------------------------------------------------------------
\begin{Cor}\label{Cor:BDMN-I} Let~\eqref{parameters-1} hold and $m\in[m_1,1)$. With the above notations,~\eqref{GlobalEntropyDecay} holds if $v$ is a solution to~\eqref{FD-FP} with initial datum subject to~\eqref{Ineq:sandwiched}.
\end{Cor}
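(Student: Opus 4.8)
The plan is to render rigorous the formal chain \eqref{Eqn:DerivF}--\eqref{GlobalEntropyDecay} sketched in the introduction: differentiate the free energy along the flow of \eqref{FD-FP}, insert the entropy--entropy production inequality \eqref{EP}, and close the argument by Gr\"onwall's lemma. Three points need justification: (a) the two-sided bound \eqref{Ineq:sandwiched} propagates in time; (b) the identity \eqref{Eqn:DerivF} holds rigorously and $\mathcal F[v(t)]$, $\mathcal I[v(t)]$ are finite along the flow; (c) the constant $\mathcal K(M)$ is positive. For (a), the key remark is that, for \emph{every} $C>0$, the profile $\big(C+|x|^{2+\beta-\gamma}\big)^{1/(m-1)}$ is a stationary solution of \eqref{FD-FP}: indeed $\mathfrak B^{m-1}-|x|^{2+\beta-\gamma}\equiv C$, so the flux in \eqref{FD-FP} vanishes identically. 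In particular $\mathfrak B_1$ and $\mathfrak B_2$ are time-independent solutions, so by the comparison principle for \eqref{FD-FP} --- available from the well-posedness theory of the equation (see Section~\ref{sect: nonFP}) --- one gets $\mathfrak B_1(x)\le v(t,x)\le\mathfrak B_2(x)$ for all $t\ge0$ and a.e.\ $x\in\R^d$. Moreover $m\ge m_1>m_c$, since $m_1-m_c=\tfrac{2+\beta-\gamma}{2\,(d-\gamma)}>0$ by~\eqref{parameters-1}; hence $\mathfrak B_1,\mathfrak B_2\in\L^{1,\gamma}(\R^d)$, so that $v(t)\in\L^{1,\gamma}(\R^d)$ with $\nrm{v(t)}{1,\gamma}=\nrm{v_0}{1,\gamma}=M$ for every $t\ge0$ by conservation of the weighted mass.

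For (b), the sandwiching keeps $v$ locally bounded and locally bounded away from zero and controls it near the origin and near infinity in the relevant weighted norms; combined with parabolic regularity this provides exactly the smoothness and integrability needed to carry out the computations of Section~\ref{sect: rig-ent}, where it is established that $t\mapsto\mathcal F[v(t)]$ is finite and (locally) absolutely continuous on $[0,\infty)$, that $\mathcal I[v(t)]<\infty$, and that \eqref{Eqn:DerivF} holds for a.e.\ $t>0$ --- finiteness of $\mathcal F[v(t)]$ relying on the fact that \eqref{Ineq:sandwiched} forces the leading-order tail of $v$ to match that of $\mathfrak B$. Granting this, since $\nrm{v(t)}{1,\gamma}=M$ we may apply \eqref{EP} at each time to get $\mathcal I[v(t)]\ge\mathcal K(M)\,\mathcal F[v(t)]$, whence, using $\Lambda(M)=\tfrac m2\,(1-m)^{-2}\,\mathcal K(M)$,
\[
\frac{\rd}{\rd t}\,\mathcal F[v(t)]=-\,\frac m{1-m}\,\mathcal I[v(t)]\le-\,\frac m{1-m}\,\mathcal K(M)\,\mathcal F[v(t)]=-\,2\,(1-m)\,\Lambda(M)\,\mathcal F[v(t)]
\]
for a.e.\ $t>0$. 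Thus $t\mapsto\mathcal F[v(t)]\,e^{2\,(1-m)\,\Lambda(M)\,t}$ is nonnegative, absolutely continuous and non-increasing, hence bounded above by its value $\mathcal F[v(0)]$ at $t=0$, which is precisely \eqref{GlobalEntropyDecay}.

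It remains to check (c). By Theorem~\ref{Thm:BDMN-I}, for every $M>0$ and every $m\in[m_1,1)$ under~\eqref{parameters-1} one has $\Lambda(M)>0$, hence $\mathcal K(M)>0$: this is item~(ii) when $\beta>\beta_{\rm FS}(\gamma)$, item~(iii) when $\beta<\beta_{\rm FS}(\gamma)$ and $\gamma<0$ or when $0\le\gamma<d$, and in the remaining symmetric case $\beta=\beta_{\rm FS}(\gamma)$ with $\gamma<0$ symmetry holds, so \eqref{EP} is valid with $\mathcal K(M)\ge\tfrac{1-m}{m}\,(2+\beta-\gamma)^2>0$; in any event, were $\mathcal K(M)$ to vanish, \eqref{GlobalEntropyDecay} would be trivially true. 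The main obstacle is therefore (a)--(b): establishing the comparison principle for the degenerate equation \eqref{FD-FP} with its singular weights, and, above all, justifying the integrations by parts behind \eqref{Eqn:DerivF} uniformly in $t$ while controlling the boundary contributions at the origin (where $|x|^{-\gamma}$, $|x|^{-\beta}$ are singular) and at infinity --- this is exactly the purpose of Section~\ref{sect: rig-ent}, on which the present Corollary leans.
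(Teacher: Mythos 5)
Your argument is correct and matches the approach the paper intends: the paper does not write out a standalone proof of Corollary~\ref{Cor:BDMN-I}, but assembles exactly the ingredients you use --- the comparison principle of Corollary~\ref{Lem:MP} to propagate~\eqref{Ineq:sandwiched} (noting, as you do, that each $\big(C+|x|^{2+\beta-\gamma}\big)^{1/(m-1)}$ is stationary for~\eqref{FD-FP}), the conservation of (relative) mass from Proposition~\ref{prop:relconsmass} (with $m\ge m_1>m_c$ guaranteeing $\mathfrak B_i\in\L^{1,\gamma}$ and hence true mass conservation), the rigorous entropy--entropy production identity of Proposition~\ref{Prop:EntrProd} in Section~\ref{sect: rig-ent}, the inequality~\eqref{EP} from Part~I, and Gr\"onwall. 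Your remark that $\mathcal K(M)=0$ would make~\eqref{GlobalEntropyDecay} trivial neatly disposes of the positivity issue, and you correctly identify that the real technical content is delegated to Proposition~\ref{Prop:EntrProd}, where the integrations by parts at the origin and at infinity are controlled.
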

%---------------------------------------------------------------------

We know that symmetry holds in~\eqref{CKN} whenever $\beta\le\beta_{\rm FS}(\gamma)$ and $\gamma<0$, or if $0\le\gamma<d$. The restriction $m\ge m_1$ comes from the (sub-)criticality condition $p\le p_\star$ in~\eqref{CKN}. As in~\cite{BBDGV-CRAS,BBDGV,BDGV}, if one is interested only in the \emph{asymptotic rate of decay} of $\mathcal F[v(t)]$ as $t\to\infty$ (\emph{i.e.},~without requiring that the multiplicative constant is $\mathcal F[v(0)] $), this restriction can be lifted and better estimates of the rates can be given using an appropriate linearization of the problem. Let us give some explanations in this regard.

\smallskip Still at a formal level, we may consider a solution $v=\mathfrak B\,(1+\varepsilon\,\mathfrak B^{1-m}\,f)$ to~\eqref{FD-FP} and keep only the first order term in $\varepsilon$, as in~\cite{BBDGV}. The corresponding linear evolution equation is
\[
f_t=(1-m)\,\mathcal L\,f\,.
\]
Mass conservation (more precisely, \emph{relative} mass conservation, see Sections~\ref{Sec:Basic} and~\ref{sect: nonFP}), which is taken into account by requesting that
\[
\int_{\R^d} f\,\mathfrak B^{2-m}\,\frac{\rd x}{|x|^\gamma}=0\,,
\]
suggests to analyse the spectral gap of $\mathcal L$ considered as an operator acting on the space $\mathrm L^2(\R^d,\mathfrak B^{2-m}\,|x|^{-\gamma}\,\rd x)$. The reader interested in more details is invited to refer to~\cite[Section~3.3]{BDMN2016a}. Let us define
\be{parameters}
m_\ast:=\frac{d-4-2\,\beta+\gamma}{d-2-\beta}\,,\quad\alpha:=1+\frac{\beta-\gamma}2\,,\quad\delta:=\frac1{1-m}\,,\quad n:=2\,\frac{d-\gamma}{\beta+2-\gamma}\,,
\ee
and pick the unique positive solution to $\nueta\,(\nueta+n-2)=(d-1)/\alpha^2$, which is given by
\[
\nueta=\sqrt{\tfrac{d-1}{\alpha^2}+\big(\tfrac{n-2}2\big)^2}-\tfrac{n-2}2=\tfrac2{2+\beta-\gamma}\sqrt{d-1+\big(\tfrac{d-2-\beta}2\big)^2}-\tfrac{d-2-\beta}{2+\beta-\gamma}\,.
\]
The following result can be deduced from~\cite[Lemma~8]{BDMN2016a} (please note the discrepancy of a factor $\alpha^2$, which is due to the change of variables $x\mapsto x\,|x|^{\alpha-1}$). See in particular~\cite[Figure~4 and Appendix~B]{BDMN2016a} for details.
%---------------------------------------------------------------------
\begin{Prop}[A Hardy-Poincar\'e-type inequality]\label{Prop:Spectrum} Let $d\ge2$, $m\in(0,1)$ and~\eqref{parameters-1} holds. For any $f\in\mathrm L^2(\R^d,\mathfrak B^{2-m}\,|x|^{-\gamma}\,\rd x)$, such that $\int_{\R^d}f\,\mathfrak B^{2-m}\,|x|^{-\gamma}\,\rd x=0$ if $m>m_\ast$, there holds
\[
\int_{\R^d}|\nabla f|^2\,\mathfrak B\,\frac{\rd x}{|x|^\beta}\ge\Lambda\int_{\R^d}|f|^2\,\mathfrak B^{2-m}\,\frac{\rd x}{|x|^\gamma}\,.
\]
The optimal constant is $\Lambda=\Lambda_{\rm ess}$ if $\delta\le(n+2)/2$ and $\Lambda=\min\left\{\Lambda_{\rm ess},\Lambda_{0,1},\Lambda_{1,0}\right\}$ otherwise, with
\[
\Lambda_{\rm ess}=\tfrac14\,\(n-2-2\,\delta\)^2\,,\quad\Lambda_{0,1}=\,2\,\delta\,\nueta\,,\quad\Lambda_{1,0}=2\,(2\,\delta-n)\,.
\]
\end{Prop}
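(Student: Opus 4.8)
The plan is to reduce the weighted Hardy--Poincar\'e inequality of Proposition~\ref{Prop:Spectrum} to a known, already-established spectral statement via the change of variables $x\mapsto y=x\,|x|^{\alpha-1}$ that was used throughout Part~I. Under this substitution the anisotropic weights $|x|^{-\beta}$ and $|x|^{-\gamma}$ in the two integrals collapse, the operator $\mathcal L$ becomes (up to the overall factor $\alpha^2$ mentioned in the remark preceding the statement) the standard drift--diffusion operator associated with a Barenblatt profile in an effective dimension~$n$, and the Barenblatt exponent $1/(m-1)$ gets repackaged as $\delta=1/(1-m)$. Concretely, I would first record the transformed integrals: $\int|\nabla f|^2\,\mathfrak B\,|x|^{-\beta}\,dx$ becomes, after the substitution, $\alpha^2\int|\nabla_y g|^2\,\tilde{\mathfrak B}\,dy$ over a space of dimension $n$ with $\tilde{\mathfrak B}(y)=(C+|y|^2)^{1/(m-1)}$, while $\int|f|^2\,\mathfrak B^{2-m}\,|x|^{-\gamma}\,dx$ becomes $\int|g|^2\,\tilde{\mathfrak B}^{2-m}\,dy$; the mass constraint transforms accordingly. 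The factor $\alpha^2$ explains the claimed discrepancy with \cite[Lemma~8]{BDMN2016a}.

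Next I would invoke the Hardy--Poincar\'e inequality for the \emph{non-weighted} fast diffusion operator in a (possibly fractional) dimension~$n$, which is exactly the content of \cite[Lemma~8]{BDMN2016a} (and, in the classical setting, of \cite{BBDGV}). That result gives the optimal constant as an explicit minimum over the bottom of the essential spectrum and the first few discrete eigenvalues coming from spherical-harmonic decomposition: the radial mode indexed by $(1,0)$ and the first non-radial mode $(0,1)$. Translating the constants of that lemma through the dictionary $\alpha^2\,\Lambda^{\mathrm{transformed}}=\Lambda^{\mathrm{original}}$ and substituting $\nueta\,(\nueta+n-2)=(d-1)/\alpha^2$ yields precisely $\Lambda_{\mathrm{ess}}=\tfrac14(n-2-2\delta)^2$, $\Lambda_{0,1}=2\delta\nueta$ and $\Lambda_{1,0}=2(2\delta-n)$. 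The threshold $\delta\le(n+2)/2$ (equivalently $m\le m_\ast$) is the regime in which the essential spectrum reaches down to the bottom and the discrete eigenvalues, if they exist at all, lie above $\Lambda_{\mathrm{ess}}$, so that no mass constraint is needed and $\Lambda=\Lambda_{\mathrm{ess}}$; above the threshold the discrete eigenvalues detach below the essential spectrum and the optimal constant is the stated minimum, the constraint $\int f\,\mathfrak B^{2-m}\,|x|^{-\gamma}\,dx=0$ being what removes the would-be zero eigenvalue $\Lambda_{0,0}$.

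Finally I would check that the conditions under which Proposition~\ref{Prop:Spectrum} is stated ($d\ge2$, $m\in(0,1)$, and \eqref{parameters-1}) guarantee that the transformed problem lands inside the range of validity of \cite[Lemma~8]{BDMN2016a}, in particular that the effective dimension $n=2(d-\gamma)/(\beta+2-\gamma)$ is $>2$ and that $\tilde{\mathfrak B}$ is a genuine Barenblatt profile with the right integrability properties against the relevant measures; for this I would point to \cite[Figure~4 and Appendix~B]{BDMN2016a}, where the admissible region is mapped out. The only real subtlety — and the step I expect to demand the most care — is bookkeeping the various normalizations: the factor $\alpha^2$, the relation between $m$, $\delta$ and the Barenblatt exponent after rescaling, and the precise identification of which spherical-harmonic eigenvalue corresponds to which of $\Lambda_{0,1}$, $\Lambda_{1,0}$, $\Lambda_{\mathrm{ess}}$. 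Since all of this is already done in Part~I, the proof here is essentially a short derivation ``from \cite[Lemma~8]{BDMN2016a}'' by change of variables, and I would present it as such rather than redoing the spectral analysis.
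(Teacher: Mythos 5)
Your proposal is essentially the paper's own treatment: the paper supplies no proof but merely notes that Proposition~\ref{Prop:Spectrum} follows from \cite[Lemma~8]{BDMN2016a} by the change of variables $x\mapsto x\,|x|^{\alpha-1}$ and bookkeeping of the $\alpha^2$ factor, which is exactly the deduction you propose to carry out. One detail to tighten: the transformed Dirichlet form is not $\alpha^2\int|\nabla_y g|^2\,\tilde{\mathfrak B}\,|y|^{n-d}\,dy$ but the anisotropic $\int|\mathsf D_\alpha g|^2\,\tilde{\mathfrak B}\,|y|^{n-d}\,dy$, in which $\alpha^2$ multiplies only the radial derivative; this asymmetry is precisely why $\nu$ is determined by $\nu(\nu+n-2)=(d-1)/\alpha^2$ rather than by a global rescaling of all eigenvalues, as you in fact record when you write that relation.
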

%---------------------------------------------------------------------
Notice that the optimal constant $\Lambda$ in the Hardy-Poincar\'e-type inequality is independent of $C$. In the range $m\in(m_c,1)$, it is also independent of $M$ since~$\mathfrak B$ as defined in~\eqref{Barenblatt-intro2} with $C=C(M)$ explicitly depends on $M$, according to the expression of $C(M)$ given in~\cite[Appendix~A]{BDMN2016a}.

In Proposition~\ref{Prop:Spectrum}, $\Lambda_{\rm ess}$, $\Lambda_{0,1}$ and $\Lambda_{1,0}$, respectively, denote the infimum of the essential spectrum of $\mathcal L$, the lowest positive eigenvalue associated with a non-radial eigenfunction, and the lowest positive eigenvalue associated with a radial eigenfunction. In practice, we have $\Lambda=\Lambda_{1,0}$ if $(n+2)/2\le\delta\le n/(2-\nueta)$ and $\Lambda=\Lambda_{0,1}$ if $\delta > n/(2-\nueta)$, with $(n-2)/2<(n+2)/2<n/(2-\nueta)$ and $\Lambda_{\rm ess}=0$ if and only if $\delta=(n-2)/2$. See Figure~\ref{Fig1}.
%---------------------------------------------------------------------
\begin{figure}[ht]
\hspace*{-6pt}\includegraphics[width=6.25cm]{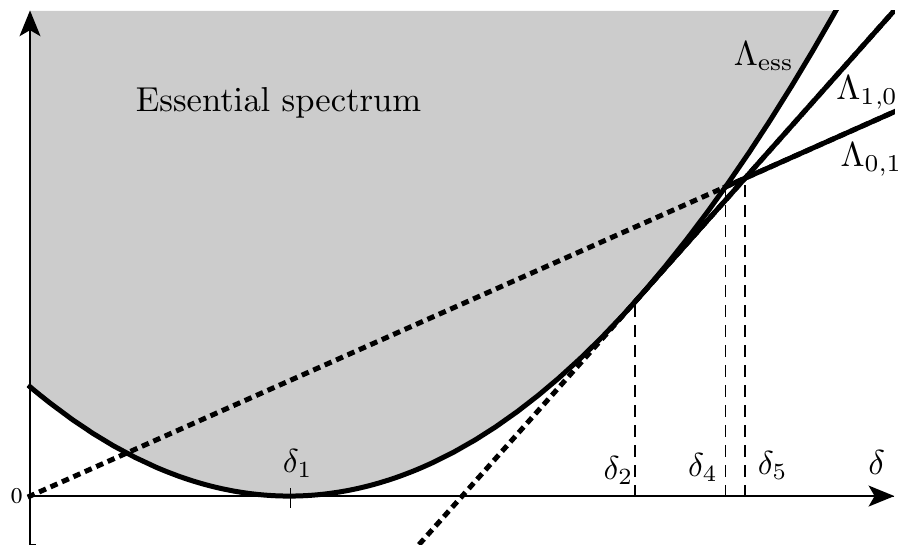}\hspace*{6pt}\includegraphics[width=6cm]{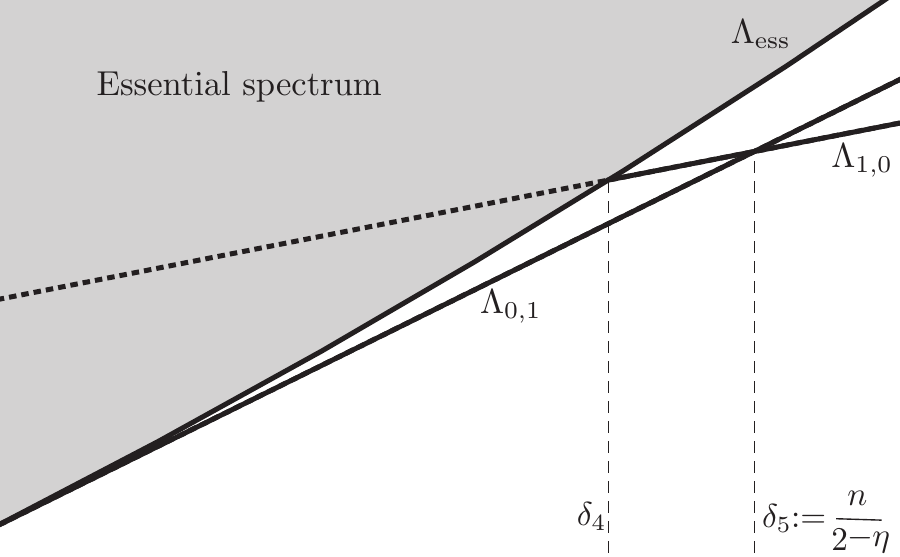}
\caption{\label{Fig1} The spectrum of $\mathcal L$ as a function of $\delta=\frac1{1-m}$, with $n=5$. The essential spectrum corresponds to the grey area, and its bottom is determined by the parabola $\delta\mapsto\Lambda_{\rm ess}(\delta)$. The two eigenvalues $\Lambda_{0,1}$ and $\Lambda_{1,0}$ are given by the plain, half-lines, away from the essential spectrum. Note that solutions of the eigenvalue problem exist for any value of $\delta$ but may not be in the domain of the operator or below the essential spectrum and are then represented as dotted half-lines. See \cite[Appendix~B]{BDMN2016a} for a discussion of the values of $\delta_1$, $\delta_2$,... $\delta_5$. The right figure is an enlargement of the left~one. This configuration is not generic: see \cite[Fig.~5]{BDMN2016a} for other cases.}
\end{figure}
%---------------------------------------------------------------------

\smallskip Notice that $m_\ast$ as defined in~\eqref{parameters} is the unique value of $m$ for which, eventually, $\Lambda_{\rm ess}=0$ and, as a consequence, for which there is no spectral gap. Notice that for some values of $d$, $\gamma$ and $\beta$, the exponent $m_\ast$ takes nonpositive values. However our results are limited to $m\in(0,1)$ and in particular $m>0$ will be assumed throughout this paper. Since $\delta=1/(1-m)$, we obtain precisely the value given by~\eqref{parameters}. If $m>m_\ast$ then $\mathfrak B_2-\mathfrak B_1$ is in $\mathrm L^{1,\gamma}(\R^d)$, where $\mathfrak B_1$ and $\mathfrak B_2$ are defined as in~\eqref{Ineq:sandwiched}. This is not anymore true if $m\le m_\ast$. In that case we shall consider $\LL^{1,\gamma}$-perturbations of $\mathfrak B$ as defined in~\eqref{Barenblatt-intro2}, for some constant $C\in[C_2,C_1]$. If $m>m_\ast$ then the condition
\[\label{cond-zero-mass}
\int_{\RR^d}{(v_0-\mathfrak B)\,|x|^{-\gamma}\,\rd x }=0
\]
uniquely determines $C=C(M)$. We shall refer to these conditions as the \emph{relative mass condition}: see Assumptions (H1) and (H2) in Section~\ref{Sec:Asymptotic} for further details.

\smallskip We are now able to state the main results of this paper.
%---------------------------------------------------------------------
\begin{Thm}\label{Thm:Asymptotic rates} Let~\eqref{parameters-1} hold and $m\in(0,1)$, with $m\neq m_\ast$. Under the relative mass condition and with same notations as in Proposition~\ref{Prop:Spectrum}, if $v$ solves~\eqref{FD-FP} subject to~\eqref{Ineq:sandwiched}, then there exists a positive constant~$\mathcal C$ such that
\[\label{AsymptoticEntropyDecay}
\mathcal F[v(t)]\le\mathcal C\,e^{-\,2\,(1-m)\,\Lambda\,t}\quad\forall\,t\ge0\,.
\]
\end{Thm}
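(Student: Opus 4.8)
The plan is to combine the entropy--entropy production identity \eqref{Eqn:DerivF}, which holds rigorously by Section~\ref{sect: rig-ent}, with a linearization argument that upgrades the Hardy--Poincar\'e inequality of Proposition~\ref{Prop:Spectrum} into an asymptotically sharp entropy--entropy production inequality along the flow. The key point is that the sandwich condition \eqref{Ineq:sandwiched} is preserved by the flow (by comparison with the explicit Barenblatt solutions, using that $\mathfrak B_1$ and $\mathfrak B_2$ are themselves stationary solutions of \eqref{FD-FP}), so that for all $t\ge0$ the solution satisfies $\mathfrak B_1\le v(t)\le\mathfrak B_2$. From this two-sided bound one obtains uniform equivalences: writing $v(t)=\mathfrak B\,(1+\mathfrak B^{1-m}\,f(t))$ with $\mathfrak B$ the reference Barenblatt profile fixed by the relative mass condition, the functions $f(t)$ are uniformly bounded in $\mathrm L^\infty$, and one has two-sided bounds $c\,\mathfrak B^{m-2}\le \frac{\partial^2}{\partial s^2}\big[(\mathfrak B+s(v-\mathfrak B))^m\big]\big|_{\text{some }s}\le C\,\mathfrak B^{m-2}$ that let us compare $\mathcal F[v(t)]$ with the quadratic form $\int_{\R^d}|f(t)|^2\,\mathfrak B^{2-m}\,|x|^{-\gamma}\,\rd x$, and likewise $\mathcal I[v(t)]$ with $\int_{\R^d}|\nabla f(t)|^2\,\mathfrak B\,|x|^{-\beta}\,\rd x$, up to multiplicative constants depending only on $C_1,C_2$.

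The heart of the argument is then a "relative entropy method" in the spirit of \cite{BBDGV-CRAS,BBDGV,BDGV}: one shows that the ratio $\mathcal I[v(t)]/\mathcal F[v(t)]$ converges to $2\Lambda$ (with the $\Lambda$ of Proposition~\ref{Prop:Spectrum}) as $t\to\infty$. First I would establish that $\mathcal F[v(t)]\to0$: this follows from Corollary~\ref{Cor:BDMN-I} when $m\ge m_1$, and in general from the fact that, by \eqref{Eqn:DerivF}, $\mathcal F[v(t)]$ is nonincreasing and $\int_0^\infty\mathcal I[v(t)]\,\rd t<\infty$, combined with a compactness/uniqueness argument (the $\omega$-limit set consists of stationary solutions of \eqref{FD-FP} with the prescribed relative mass, hence reduces to $\mathfrak B$). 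Granted $\mathcal F[v(t)]\to0$, hence $\|f(t)\|\to0$ in the relevant weighted $\mathrm L^2$ norm, the nonlinear remainders in the Taylor expansions of $\mathcal F$ and $\mathcal I$ around $\mathfrak B$ become negligible compared to the quadratic terms; applying Proposition~\ref{Prop:Spectrum} to $f(t)$ (which satisfies the zero-average condition $\int f(t)\,\mathfrak B^{2-m}|x|^{-\gamma}\,\rd x=0$ when $m>m_\ast$, precisely by the relative mass condition) yields, for every $\eta>0$, a time $t_\eta$ such that
\[
\frac{\rd}{\rd t}\,\mathcal F[v(t)]=-\,\frac m{1-m}\,\mathcal I[v(t)]\le-\,2\,(1-m)\,(\Lambda-\eta)\,\mathcal F[v(t)]\qquad\forall\,t\ge t_\eta\,.
\]
A Gronwall argument then gives $\mathcal F[v(t)]\le \mathcal F[v(t_\eta)]\,e^{-2(1-m)(\Lambda-\eta)(t-t_\eta)}$; to get the clean exponent $\Lambda$ (rather than $\Lambda-\eta$) in the statement, I would run a bootstrap: once an algebraic or slightly-suboptimal exponential decay of $\mathcal F[v(t)]$ is known, the nonlinear remainder terms are controlled by a higher power of $\mathcal F[v(t)]$, which is then absorbed, improving the exponent to any value below $\Lambda$, and finally to $\Lambda$ itself by adjusting the multiplicative constant $\mathcal C$ (which is allowed to depend on everything).

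There are two technical points requiring care, and I expect the \emph{first} to be the main obstacle. (1) Justifying that the nonlinear remainder in $\mathcal F$ and especially in $\mathcal I$ is genuinely higher order: the Fisher information involves $\nabla v^{m-1}$, so one must control $\nabla f(t)$, not merely $f(t)$, in weighted $\mathrm L^2$; this is where uniform parabolic regularity estimates (Harnack inequalities, $\mathrm C^k$ bounds on $v/\mathfrak B$ away from the origin, and a separate analysis near $x=0$ and near $x=\infty$ using the change of variables $x\mapsto x|x|^{\alpha-1}$ of \eqref{parameters}) enter, presumably imported from Part~I or from the companion literature \cite{BV-JFA,BBDGV,BDGV}; the weights make the local regularity theory near the singular point $x=0$ more delicate than in the case $(\beta,\gamma)=(0,0)$. (2) The case $m\le m_\ast$, where $\mathfrak B_2-\mathfrak B_1\notin\mathrm L^{1,\gamma}$ and the relative mass need not vanish: here $\Lambda=\Lambda_{\rm ess}$ may be realized by a non-integrable (essential-spectrum) mode, Proposition~\ref{Prop:Spectrum} applies without the zero-average constraint, and one must check that the flow still drives $f(t)$ to zero in the weighted $\mathrm L^2$ sense and that the linearization remains valid; the exclusion $m\ne m_\ast$ in the hypothesis is exactly what guarantees $\Lambda>0$, i.e.\ a genuine spectral gap, so the scheme above goes through verbatim once the regularity inputs are in place.
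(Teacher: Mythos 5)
Your proposal follows the same overall route as the paper (entropy--entropy production, linearization around $\mathfrak B$, Gronwall with a perturbation term controlled by a power of the entropy, then a one-step bootstrap to the sharp exponent), but two of the points you gloss over as routine are in fact where the paper has to work hardest, and in both places your heuristic is slightly off-target.

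First, the comparison $\mathcal I[v]\sim\int|\nabla f|^2\,\mathfrak B\,|x|^{-\beta}\,\rd x$ is \emph{not} a two-sided equivalence ``up to multiplicative constants depending only on $C_1,C_2$.'' The free energy does admit such a two-sided bound against $\FL$ (Lemma~\ref{Lem.Bounds.RE}), because $\mathcal F$ is pointwise a second-order Taylor remainder and only $\LL^\infty$ bounds on $w$ enter. The Fisher information is more delicate: the paper (Lemma~\ref{Fisher-lin-nonlin}) only proves a \emph{one-sided} inequality $\IL\le \tfrac{(1+h)^{3-2m}}{(1-m)(1-h)}\mathcal I+\mu_h\,h\,\FL$, with an error term proportional to $h=\|w(t)-1\|_\infty$ that vanishes only asymptotically. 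The mechanism is an exact algebraic identity $\nabla\big(A(w)\,g\big)=w^{m-2}\nabla g+A'(w)\,g^2\,\nabla(\mathfrak B^{1-m})$ (with $g=(w-1)\mathfrak B^{m-1}$, $A(w)=\tfrac{w^{m-1}-1}{(m-1)(w-1)}$) followed by Young's inequality: the gradient falls on $\mathfrak B^{1-m}$ rather than on $f$, so no control of $\nabla f$ is ever needed, and the extra term is absorbed into $\FL$ using the weight identity $\mathfrak B^{4-3m}\cdot\mathfrak B^{2m-2}=\mathfrak B^{2-m}$. Your proposed route of controlling $\nabla f(t)$ directly via parabolic regularity would be much harder and is not what the paper does. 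Second, and consequently, the H\"older/Harnack regularity results you correctly anticipate are needed are used \emph{not} inside the Fisher-information linearization but in the interpolation step (Lemma~\ref{lem.EntrVsLp}, combined with the uniform $C^\etanu$ bound~\eqref{eq:mathcalH}): they convert the $\LL^{(2-m)/(1-m),\gamma}$ decay of $w-1$ coming from the entropy into an $\LL^\infty$ decay of $w-1$, which is what bounds $h(t)$ and hence the error term $\mu_h\,h\,\FL$. Without this, the differential inequality in your Gronwall step has no quantitative handle on the perturbation. In other words: the paper's Proposition~\ref{prop: cvrgcewithoutrate} proves $\|w(t)-1\|_\infty\to0$ (not merely $\mathcal F[v(t)]\to0$, which is weaker and insufficient), precisely because $h$ is the quantity that must be small for Lemmas~\ref{Lem.Bounds.RE} and~\ref{Fisher-lin-nonlin} to close. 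Once these corrections are made, your bootstrap conclusion is right: a single preliminary half-rate estimate $\mathcal F[w(t)]\lesssim e^{-(1-m)\Lambda t}$, fed through the interpolation $h(t)\lesssim\mathcal F[w(t)]^\varth$, makes the perturbation in the Gronwall inequality decay exponentially, and one explicit integration (no infinite iteration needed) gives the sharp exponent $2(1-m)\Lambda$ at the price of a larger constant $\mathcal C$.
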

%---------------------------------------------------------------------
As in~\cite{BBDGV,BDGV}, this result itself relies on a result of \emph{relative uniform convergence} which is the key estimate to relate the free energy to the spectrum of $\mathcal L$. Before stating the latter, let us define
\[
\textstyle\zeta :=1 -\,\big(1-\frac{2-m}{(1-m)\,q}\big)\,\big(1-\frac{2-m}{1-m}\,\varth\big)\quad\mbox{where}\quad\varth:=\frac{(1-m)\,(2+\beta-\gamma)}{(1-m)\,(2+\beta)+2+\beta-\gamma}
\]
and observe that in view of hypotheses~\eqref{parameters-1} and of the change of variables~\eqref{parameters}, $\varth$ is in the range $0<\varth < \tfrac{1-m}{2-m} < 1$.
%---------------------------------------------------------------------
\begin{Thm}\label{Thm:RUC} Under the assumptions of Theorem~\ref{Thm:Asymptotic rates}, there exist positive constants~$\mathcal K$ and~$t_0$ such that, for all $q\in\big[\tfrac{2-m}{1-m},\infty\big]$, the function $w=v/\mathfrak B$ satisfies
\be{AsymptoticDecay.RelErr}
\left\| {w(t)-1} \right\|_{\LL^{q,\gamma}(\RR^d)} \le \mathcal K\,e^{-\,2\,\frac{(1-m)^2}{2-m}\,\Lambda\,\zeta\,(t-t_0)}\quad\forall\,t\ge t_0
\ee
in the case $\gamma \in (0,d)$, and
\[\label{AsymptoticDecay.RelErr-bis}
\left\| {w(t)-1} \right\|_{\LL^{q,\gamma}(\RR^d)} \le \mathcal K\,e^{-\,2\,\frac{(1-m)^2}{2-m}\,\Lambda\,(t-t_0)}\quad\forall\,t\ge t_0
\]
in the case $\gamma \le 0$.
\end{Thm}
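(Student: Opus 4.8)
The plan is to establish the relative uniform convergence \eqref{AsymptoticDecay.RelErr} by a bootstrap argument that upgrades the free-energy decay of Corollary~\ref{Cor:BDMN-I} (or, in the range where criticality fails, an analogous global bound obtained directly from the Fisher information after a preliminary time) into a pointwise-in-time bound on $\|w(t)-1\|_{\LL^{q,\gamma}}$ for the critical exponent $q=\tfrac{2-m}{1-m}$ first, and then interpolate up to $q=\infty$. First I would record that, under \eqref{Ineq:sandwiched}, the comparison principle for \eqref{FD-FP} keeps the solution sandwiched between two Barenblatt profiles $\mathfrak B_{1}(t)\le v(t)\le\mathfrak B_{2}(t)$ with time-dependent constants $C_{1}(t),C_{2}(t)$ that converge to $C$; this gives a priori two-sided bounds $0<c_\star\le w(t)\le c^\star<\infty$ on every bounded region, and controlled decay at infinity. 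These bounds are exactly what is needed to make the free energy comparable to a weighted $\LL^{2}$ norm of $w-1$ near the origin and to an $\LL^{p+1,\gamma}$-type norm in the far region, via the elementary inequality relating $v^{m}-\mathfrak B^{m}-m\mathfrak B^{m-1}(v-\mathfrak B)$ to $(v-\mathfrak B)^{2}$ when $v/\mathfrak B$ is bounded above and below. Thus from $\mathcal F[v(t)]\le\mathcal C\,e^{-2(1-m)\Lambda(M)t}$ (global regime) one already gets a weighted $\LL^{2}$-type decay of $w-1$.

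The core of the argument is then a smoothing estimate in relative error. The idea, following \cite{BBDGV,BDGV}, is to change variables $x\mapsto x|x|^{\alpha-1}$ so that, with $\alpha,n,\delta$ as in \eqref{parameters}, equation \eqref{FD-FP} becomes a fast diffusion equation with a weight $|x|^{n-d}$ behaving like the one in a fractional-dimension space of dimension $n$, for which weighted Harnack inequalities and the parabolic regularity theory are available. Writing the equation satisfied by $w=v/\mathfrak B$, or rather by $w-1$, one derives a differential inequality for $\frac{d}{dt}\|w(t)-1\|_{\LL^{q,\gamma}}^{q}$ and couples it with the $\LL^{2}$-decay coming from the free energy through a Gagliardo--Nirenberg-type interpolation adapted to the weights: this is precisely where the exponent $\vartheta$ and hence the constant $\zeta$ enter, and where the extra factor $\frac{(1-m)^{2}}{2-m}\zeta$ in the exponent (versus the naive $\Lambda$) comes from. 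The case distinction $\gamma>0$ versus $\gamma\le0$ reflects whether the weight $|x|^{-\gamma}$ is locally integrable at the origin, which affects the available embeddings and is why $\zeta$ can be dropped when $\gamma\le0$. A delicate point worth isolating as a separate lemma is the uniform-in-time Harnack inequality, i.e.\ an $\LL^{\infty}\!-\!\LL^{1}$ or $\LL^{\infty}\!-\!\LL^{2}$ smoothing estimate for the linearized-type operator $\mathcal L$ with the Barenblatt weight $\mathfrak B^{2-m}|x|^{-\gamma}$, valid for $t\ge t_{0}$; this is what allows passing from the critical $q$ to $q=\infty$ with only a loss in the constant $\mathcal K$ and the time shift $t_{0}$.

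The main obstacle, and the place where most of the technical work will go, is precisely this uniform regularization step: one must show that the constants in the Harnack/smoothing inequalities do not degenerate as $t\to\infty$, despite the weights $|x|^{\gamma}$, $|x|^{\beta}$ being singular or degenerate at the origin and despite the solution spreading out. This requires either a careful scaling-invariant version of Moser iteration for the weighted operator (using that $n-d$ is the effective weight exponent and that $\mathfrak B$ is comparable to a fixed profile for $t$ large, by the sandwiching), or an argument that transfers known non-weighted results via the $x\mapsto x|x|^{\alpha-1}$ substitution on the radial part while treating the angular variables separately. Once the $\LL^{\infty}$ bound on $|w(t)-1|$ is in hand with the stated exponential rate, interpolation with the (time-uniform) $\LL^{q_{0},\gamma}$ bound at the critical $q_{0}=\tfrac{2-m}{1-m}$ gives \eqref{AsymptoticDecay.RelErr} for all $q\in[q_{0},\infty]$, completing the proof; the $\gamma\le0$ statement follows identically with $\zeta$ replaced by $1$ because the relevant weighted embedding no longer costs a power of the solution's sup-norm.
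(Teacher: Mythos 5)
Your high-level architecture (pass from free-energy decay to $\LL^{q_0,\gamma}$ decay with $q_0=\tfrac{2-m}{1-m}$, then to $\LL^\infty$ decay, then interpolate) matches the paper's, but several of the intermediate steps you describe are not the ones used, and one would not work as stated.

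First, you take as input the \emph{global} rate of Corollary~\ref{Cor:BDMN-I}, which involves $\Lambda(M)$ and requires $m\in[m_1,1)$, i.e.\ subcriticality $p\le p_\star$. Theorem~\ref{Thm:RUC} claims the sharp \emph{asymptotic} rate governed by the Hardy--Poincar\'e constant $\Lambda$ of Proposition~\ref{Prop:Spectrum}, valid for the full range $m\in(0,1)$, $m\neq m_\ast$. The correct input is Theorem~\ref{Thm:Asymptotic rates}, already established just above, which gives $\mathcal F[v(t)]\le\mathcal C\,e^{-2(1-m)\Lambda\,t}$ under exactly the stated hypotheses; starting from Corollary~\ref{Cor:BDMN-I} would produce the wrong constant in the exponent and an artificial restriction on~$m$.

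Second, the passage to $\LL^\infty$ is not obtained from a differential inequality for $\tfrac{d}{dt}\|w(t)-1\|_{\LL^{q,\gamma}}^{q}$ coupled with an $\LL^2$-decay, nor from an $\LL^\infty$--$\LL^1$ smoothing estimate for the linearized semigroup. That route would require re-running a Moser iteration along the nonlinear flow with time-uniform constants and careful control of the drift term, and it is not what makes the exponent $\vartheta$ appear. What is used is Lemma~\ref{lem.EntrVsLp}: (i) the free energy controls $\|w(t)-1\|_{\LL^{q_0,\gamma}}^{q_0}$ by Lemmas~\ref{Lem.Bounds.RE}--\ref{Lem.Entr.Lp}; (ii) the time-uniform $C^\etanu(\RR^d)$ seminorm bound of Lemma~\ref{lem:holdereg} (itself built on the parabolic Harnack inequality of the Appendix for the \emph{linear} equation with bounded measurable coefficients, applicable because $v$ is sandwiched between Barenblatts) is combined with an \emph{unweighted} Gagliardo--Nirenberg interpolation between $C^\etanu$ and $\LL^{1/\vartheta}$ to get $\sup_{\tau\ge t}\|w(\tau)-1\|_{\LL^\infty}\le\kappa_\infty\,\mathcal F[w(t)]^{\vartheta}$. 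So the relevant smoothing is at the level of $C^\etanu$ regularity (of $z=v-\mathfrak B$, via Corollary~\ref{Thm.C.alpha.lin-2}), not a Nash--Moser $\LL^1$--$\LL^\infty$ bound, and the exponent $\vartheta$ is precisely the GN exponent arising when the weighted norm is first converted into an unweighted one.

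Third, your explanation of the dichotomy $\gamma>0$ versus $\gamma\le0$ is not correct. In both regimes $|x|^{-\gamma}$ is locally integrable (we always assume $\gamma<d$). The actual difficulty for $\gamma>0$ is that Gagliardo--Nirenberg-type inequalities with $\LL^{p,\gamma}$ on the right-hand side \emph{fail}; one must trade the weight for a power of $|w-1|$ via (H1$''$), i.e.\ $|w-1|^{\gamma/(2+\beta-\gamma)}\lesssim |x|^{-\gamma}$, converting $\LL^{q_0,\gamma}$ into the unweighted $\LL^{1/\vartheta}$ with the smaller $\vartheta$ in \eqref{eq: def-theta-exp}. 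For $\gamma\le0$ this trade is not needed: H\"older on $B_r$ and $B_r^c$ gives unweighted $\LL^p$ and $\LL^q$ bounds directly and $\vartheta=\tfrac{1-m}{2-m}$, hence $\tfrac{2-m}{1-m}\vartheta=1$ and $\zeta\equiv1$.

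Once Theorem~\ref{Thm:Asymptotic rates} and Lemma~\ref{lem.EntrVsLp} are in hand, the statement follows by H\"older interpolation $\|w(t)-1\|_{\LL^{q,\gamma}}\le\|w(t)-1\|_{\LL^{q_0,\gamma}}^{q_0/q}\,\|w(t)-1\|_{\LL^\infty}^{1-q_0/q}$, whose exponent bookkeeping is exactly the definition of $\zeta$; this is the ``standard interpolation'' the proof consists of. I recommend you rebuild your argument around Lemma~\ref{lem.EntrVsLp} rather than a Moser-type differential inequality, and replace the input Corollary~\ref{Cor:BDMN-I} by Theorem~\ref{Thm:Asymptotic rates}.
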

%---------------------------------------------------------------------
We point out that Estimate~\eqref{AsymptoticDecay.RelErr} yields an improvement of a similar result, namely~\cite[Theorem~3]{BBDGV}, in the non-weighted case $(\beta,\gamma)=(0,0)$. We shall comment more on the rates of convergence provided by Theorem~\ref{Thm:RUC} in Section~\ref{Sec:Conclusion}.

\smallskip The proof of Theorem~\ref{Thm:RUC} partially relies on uniform H\"older-regularity estimates for bounded solutions to a linearized version of Equation~\eqref{FD-FP}. In view of possible degeneracies or singularities of the weights $ |x|^{-\gamma} $ and $ |x|^{-\beta} $ at the origin, such results do not follow from standard parabolic theory and therefore have to be proved separately. We devote an Appendix to these issues, where we give sketches of proofs. These are based on a strategy developed for similar equations by~\cite{ChSe85}.

\smallskip Using refinements that will be discussed in Section~\ref{Sec:Refinements}, we can also prove convergence results in $\LL^{1,\gamma}$ norms. For this purpose, we need to restrict the range of $m$ to $(\widetilde m_1,1)$, where $\widetilde m_1$ is the smallest number such that $\int_{\RR^d} |x|^{2+\beta-\gamma}\,\mathfrak B\,|x|^{-\gamma}\,\rd x $ is finite for all $m \in (\widetilde m_1,1)$, that is
\[
\widetilde m_1:=\frac{d-\gamma}{d+2+\beta-2\,\gamma}\,.
\]
Let us introduce the rescaled function
\[
v_\mu(t,x):=\mu^{\gamma-d}\,v(t,x/\mu)\quad\forall\,(t,x)\in\R^+\times\R^d\,, \ \forall\,\mu >0\,.
\]
%---------------------------------------------------------------------
\begin{Thm}\label{Thm:Norms} Under the assumptions of Theorem~\ref{Thm:Asymptotic rates} with in addition $m\in(\widetilde m_1,1)$, there exists a monotone, positive function $t\mapsto\mu(t)$ with $\lim_{t\to+\infty}\mu(t)=1$ such that
\[
\limsup_{t\to\infty} e^{(1-m)\,\min\{\Lambda_{\rm ess},\Lambda_{0,1}\}\,t}\,\nrm{\(v_{\mu(t)}(t)-\mathfrak B\)\(1+|x|^{2+\beta-\gamma}\)}{1,\gamma}<\infty\,.
\]
\end{Thm}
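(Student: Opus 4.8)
The plan is to derive the weighted-$\LL^{1,\gamma}$ convergence with rate $\min\{\Lambda_{\rm ess},\Lambda_{0,1}\}$ by combining the relative uniform convergence of Theorem~\ref{Thm:RUC} with a refined entropy--entropy production argument, after subtracting the slowly-decaying \emph{radial} mode. The key observation is that Theorem~\ref{Thm:Asymptotic rates} gives the rate $\Lambda=\min\{\Lambda_{\rm ess},\Lambda_{0,1},\Lambda_{1,0}\}$, but the radial eigenvalue $\Lambda_{1,0}$ corresponds to an infinitesimal change of the mass parameter $C$ in $\mathfrak B$, i.e. to the dilation direction; this is precisely why we are allowed to replace $v(t)$ by the rescaled $v_{\mu(t)}(t)$ and $\mathfrak B$ by a fixed Barenblatt profile. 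So the first step is to introduce, for each $t$, the best-matching dilation parameter $\mu(t)$ — chosen so that the projection of $v_{\mu(t)}(t)-\mathfrak B$ onto the radial mode $\Lambda_{1,0}$ vanishes, or equivalently so that a suitable weighted second moment of $v_{\mu(t)}(t)$ equals that of $\mathfrak B$ — and to show that $\mu(t)$ is monotone with $\mu(t)\to1$; monotonicity should follow from a differential identity for the second moment along the flow (this is where $m>\widetilde m_1$ enters, guaranteeing $\int |x|^{2+\beta-\gamma}\,\mathfrak B\,|x|^{-\gamma}\,\rd x<\infty$ so the moment is well-defined).

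Second, I would linearize: write $v_{\mu(t)}(t)=\mathfrak B\,(1+\mathfrak B^{1-m}\,f(t))$, so that $f(t)$ now has \emph{zero} component on both the mass direction (relative mass conservation) and the dilation direction $\Lambda_{1,0}$ (by the choice of $\mu(t)$). On the orthogonal complement of these two directions the Hardy--Poincar\'e inequality of Proposition~\ref{Prop:Spectrum} holds with the improved constant $\min\{\Lambda_{\rm ess},\Lambda_{0,1}\}$. Because $w(t)=v(t)/\mathfrak B\to1$ uniformly (in the strong $\LL^{\infty}$ sense contained in the $q=\infty$ case of Theorem~\ref{Thm:RUC}), for $t$ large the nonlinear flow is, up to controlled errors, governed by $f_t=(1-m)\,\mathcal L f$; the relative error estimate provides exactly the quantitative smallness needed to absorb the quadratic remainder. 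A Gr\"onwall argument on the weighted $\LL^2$ norm $\int |f|^2\,\mathfrak B^{2-m}\,|x|^{-\gamma}\,\rd x$ then gives decay at rate $2\,(1-m)\,\min\{\Lambda_{\rm ess},\Lambda_{0,1}\}$ for this quantity, and hence at rate $(1-m)\,\min\{\Lambda_{\rm ess},\Lambda_{0,1}\}$ for its square root.

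Third, I would convert this $\LL^2(\mathfrak B^{2-m}|x|^{-\gamma}\rd x)$ decay into the stated weighted $\LL^{1,\gamma}$ bound with the weight $(1+|x|^{2+\beta-\gamma})$. By Cauchy--Schwarz,
\[
\nrm{(v_{\mu(t)}-\mathfrak B)(1+|x|^{2+\beta-\gamma})}{1,\gamma}=\int |f|\,\mathfrak B^{2-m}\,(1+|x|^{2+\beta-\gamma})\,\frac{\rd x}{|x|^\gamma}\le \|f\|_{\LL^2(\mathfrak B^{2-m}|x|^{-\gamma}\rd x)}\;\Big(\int \mathfrak B^{2-m}\,(1+|x|^{2+\beta-\gamma})^2\,\frac{\rd x}{|x|^\gamma}\Big)^{1/2},
\]
and the last integral is finite exactly when $m>\widetilde m_1$ (using $\mathfrak B^{2-m}\asymp |x|^{(2+\beta-\gamma)(2-m)/(m-1)}$ at infinity and the exponent count $(2+\beta-\gamma)[(2-m)/(m-1)+2]+d-\gamma<0\iff m>\widetilde m_1$); this pins down the role of $\widetilde m_1$. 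Taking $\limsup_{t\to\infty}$ of $e^{(1-m)\min\{\Lambda_{\rm ess},\Lambda_{0,1}\}t}$ times the left-hand side then yields the claim.

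The main obstacle is the second step: making rigorous the passage from the nonlinear flow to the linearized operator uniformly in time and absorbing the remainder. One must show that the time-derivative of $\int|f|^2\mathfrak B^{2-m}|x|^{-\gamma}\rd x$ along the genuine equation~\eqref{FD-FP} equals $-2(1-m)\int|\nabla f|^2\mathfrak B|x|^{-\beta}\rd x$ plus error terms that are $o(1)$ multiples of the Fisher-type term, using that $\|w(t)-1\|_\infty\to0$ from Theorem~\ref{Thm:RUC}; the weights' singularity/degeneracy at the origin makes the required integrations by parts and the control of these remainder terms delicate, and is presumably handled with the parabolic-regularity estimates relegated to the Appendix. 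A secondary technical point is verifying that the orthogonality to the dilation mode is preserved well enough by the choice of $\mu(t)$ for the improved spectral gap to be genuinely available at all large times, rather than only asymptotically.
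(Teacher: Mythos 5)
Your proposal follows essentially the same route as the paper. The paper defines the best-matching free energy $\mathcal G[v]$ by minimizing over dilations $\mathfrak B_\mu$, and shows that the optimal $\mu_\star(t)$ is characterized by matching the weighted moment $\int|x|^{2+\beta-\gamma}v\,|x|^{-\gamma}\,\rd x$ (Eqn.~\eqref{Eqn:MomentCondition}); the monotonicity of $\mu_\star$ and $\mu_\star\to1$ follow from the identity $\tfrac{\rd}{\rd t}\int|x|^{2+\beta-\gamma}v\,|x|^{-\gamma}\,\rd x=-\,(2+\beta-\gamma)\,\tfrac{(1-m)^2}m\,\mathcal G[v]\le0$, exactly as you sketch. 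The linearized orthogonality~\eqref{Eqn:MomentConditionLinearized} then yields the improved Hardy--Poincar\'e constant $\min\{\Lambda_{\rm ess},\Lambda_{0,1}\}$ and, running the machinery of Theorem~\ref{Thm:Asymptotic rates} with $(\mathcal G,\mathcal J)$ in place of $(\mathcal F,\mathcal I)$, Proposition~\ref{Prop:Asymptotic rates}: $\mathcal G[v(t)]\le\mathcal C\,e^{-2(1-m)\min\{\Lambda_{\rm ess},\Lambda_{0,1}\}t}$. Where your write-up diverges slightly is the final conversion step: you pass through the linearized $\LL^2(\mathfrak B^{2-m}|x|^{-\gamma}\,\rd x)$ quantity and Cauchy--Schwarz, which requires invoking the comparison $\mathcal G\asymp\FL$ (the analog of Lemma~\ref{Lem.Bounds.RE}); the paper instead applies a Csisz\'ar--Kullback--Pinsker inequality (Lemma~\ref{Lem:CKP}) directly to the nonlinear $\mathcal G$, bounding $\big(C(M)\nrm{v-\mathfrak B_{\mu_\star}}{1,\gamma}+\int|x|^{2+\beta-\gamma}|v-\mathfrak B_{\mu_\star}|\,|x|^{-\gamma}\,\rd x\big)^2$ from above by a multiple of $\mathcal G[v]$. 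The two conversions are equivalent in effect, and your exponent count showing that $m>\widetilde m_1$ is exactly what makes the weighted integral finite is correct. The ``main obstacle'' you flag — rigorously replacing the linearized flow by the nonlinear one with controllable remainders — is precisely what Lemmas~\ref{Lem.Bounds.RE}, \ref{lem.EntrVsLp} and \ref{Fisher-lin-nonlin} (adapted to $\mathfrak B_{\mu_\star}$) handle; it is not the parabolic regularity of the Appendix but the algebraic entropy/Fisher comparison lemmas that close the Gr\"onwall argument here, with the Appendix only entering to guarantee the uniform convergence of the relative error in the first place.
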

%---------------------------------------------------------------------
This result is an improvement in the spirit of~\cite{1004} in the non-weighted case $(\beta,\gamma)=(0,0)$. As it appears in Proposition~\ref{Prop:Spectrum} and Theorems~\ref{Thm:Asymptotic rates} and~\ref{Thm:RUC}, $\Lambda$ is smaller than $\min\{\Lambda_{\rm ess},\Lambda_{0,1}\}$ only under conditions that are discussed in~\cite[Appendix~B]{BDMN2016a}. Also notice that, by undoing the self-similar change of variables outlined in Section~\ref{sect: nonFP}, it is possible to give algebraic rates of convergence for the original solutions to~\eqref{FD}, as in~\cite{BBDGV}.

\medskip Let us conclude this introduction by a few bibliographical references. In the case without weights, we primarily refer to~\cite{BBDGV-CRAS,BBDGV,BDGV} and references therein. The special case corresponding to $\delta=(n-2)/2$ has been treated in~\cite{BGVm*}. Still in the non-weighted case, improvements have been obtained more recently in~\cite{1004,DT2011,MR3103175,1751-8121-48-6-065206,1751-8121-48-6-065206,1501,dolbeault:hal-01081098} using refinements of relative entropy methods, and in~\cite{DKM} using a detailed analysis of fast and slow variables and of the invariant manifolds. These papers are anyway limited to the choice $(\beta,\gamma)=(0,0)$. More references can be found therein. 

As for problems with power law weights, we shall refer to~\cite{MR2425009,MR2461559,MR2556497} for approaches based on comparison techniques in the case $\beta=0$ and for the porous media equation. The papers \cite{NR13,IS14} deal with the critical power $ |x|^{-2} $, where asymptotics is more subtle. A detailed long-time analysis has been carried out in~\cite{GMP15} for the \emph{fractional} porous media equation with a weight.  Diffusion equations of porous media type with \emph{two} weights (\emph{i.e.}~weights having the same role as $ |x|^{-\gamma} $ and $ |x|^{-\beta} $ here) have been investigated, \emph{e.g.},~in~\cite{DGGW8,GMPo13}, where well-posedness issues as well as smoothing effects and asymptotic estimates are discussed in rather general weighted frameworks by means of functional inequalities. In the fast diffusion regime, convergence in relative error to a separable profile for radial solutions on the \emph{hyperbolic space} has been proved by \cite{GM14}, through pure barrier methods. Note that, in radial coordinates, the Laplace-Beltrami operator is in fact a two-weight Laplacian. The corresponding analysis for the porous medium equation (for general solutions) has then been carried out in \cite{JLV14}.

A detailed justification of the introduction of weights and especially power law weights in case of porous media and fast diffusion equations can be found in~\cite{MR634287,MR637497}. In~\cite{DMN2015}, for $\beta=0$ and $\gamma>0$ small enough, symmetry of optimal functions in the Caffarelli-Kohn-Nirenberg inequalities~\eqref{CKN} is proved to hold. Notice however that the inequalities are then more of Hardy-Sobolev type than of Caffarelli-Kohn-Nirenberg because only one weight is involved. The other case of symmetry in Caffarelli-Kohn-Nirenberg inequalities, which is now fully understood, is the one corresponding to the threshold case $p=p_\star$, which has been recently solved in~\cite{DEL2015}. Remarkably the proof relies on the very same flow~\eqref{FD} and an approach based on the Bakry-Emery $\Gamma_2$ method. The reader interested in further considerations on Caffarelli-Kohn-Nirenberg inequalities, $\Gamma_2$ computations and rigidity results in nonlinear elliptic problems on compact and non-compact manifolds is invited to refer to this paper for a more complete review of the literature in this direction.  

The paper is organized as follows. Properties of self-similar solutions, an existence result, a comparison result, the conservation of the relative mass, results on the relative entropy and the rewriting of \eqref{FD} in relative variables after a self-similar change of variables have been collected in Section~\ref{Sec:Prelim}. These results are adapted form the case $(\beta,\gamma)=(0,0)$. Section~\ref{Sec:Main} is devoted to regularity issues and to the \emph{relative uniform convergence}, that is, the uniform convergence of the quotient of the solution in self-similar variables by the Barenblatt profile: this at the core of our results and it is also where our paper differs from the case $(\beta,\gamma)=(0,0)$. There we prove Theorems~\ref{Thm:Asymptotic rates} and~\ref{Thm:RUC}. Because of the weights, the H\"older regularity at the origin is an issue. It relies on a technical result, based on an adaptation of \cite{ChSe85}: the proof is given in an Appendix. Some additional results, including the proof of Theorem~\ref{Thm:Norms} and some comments have been collected in Section~\ref{Sec:Additional}.

Throughout this paper, $B_\rho$ denotes the centered ball of radius $\rho$, that is, $B_\rho:=\{x\in\RR^d\,:\,|x|<\rho\}$.

%%%%%%%%%%%%%%%%%%%%%%%%%%%%%%%%%%%%%%%%%%%%%%%%%%%%%%%%%%%%%%%%%%%%%%
%%%%%%%%%%%%%%%%%%%%%%%%%%%%%%%%%%%%%%%%%%%%%%%%%%%%%%%%%%%%%%%%%%%%%%
\section{Self-similar variables, relative entropy and large time asymptotics}\label{Sec:Prelim}

%%%%%%%%%%%%%%%%%%%%%%%%%%%%%%%%%%%%%%%%%%%%%%%%%%%%%%%%%%%%%%%%%%%%%%
\subsection{The self-similar solutions}\label{Sec:Asymptotic}

In order to avoid confusion between original variables and rescaled variables, let us rewrite~\eqref{FD} as
\[\label{FDE.problem}
|y|^{-\gamma}\,u_\tau+\nabla \cdot \left( |y|^{-\beta}\,u\,\nabla{u^{m-1}} \right)=0 \quad \forall\,(\tau,y) \in \RR^+ \times \RR^d\,.
\]
The \emph{whole} family of explicit self-similar solutions of Barenblatt type is given by
\be{Barenblatt.FPvars1}
U_{C,T}(\tau,y):=\frac{R(\tau)^{\gamma-d}}{\(C + \big(|y|/R(\tau)\big)^{2+\beta-\gamma}\)^{1/(1-m)}} \quad \forall\,(\tau,y) \in \RR^+ \times \RR^d\,,
\ee
where $C$, $T>0$ are free parameters and $ R(\tau) $ is defined by
\be{eq: R-tau}
\frac{\rd R}{\rd \tau}=R^{1-(d-\gamma)\,(m-m_c)}\,, \quad R(0)=\left[(d-\gamma)\,|m-m_c|\,T \right]^{\frac1{(d-\gamma)\,(m-m_c)}}
\ee
if $m\neq m_c$, with $m_c$ as above, namely
\[\label{eq-def-mc0}
m_c:=\frac{d-2-\beta}{d-\gamma} \in (0,1)\,.
\]
In the special case $m=m_c$ we shall replace the initial condition with $ R(0)=e^{T}$, for $ T \in \mathbb{R}$. More explicitly,
\[
R(\tau)=
\begin{cases}
\left[(d-\gamma)\,(m-m_c)\,(T+\tau) \right]^{\frac1{(d-\gamma)\,(m-m_c)}} & \textrm{if}\;m \in (m_c,1)\,,\;\forall\,\tau \ge 0\,, \\[6pt]
e^{T+\tau} & \textrm{if}\; m=m_c\,,\;\forall\,\tau \ge 0\,, \\[6pt]
\left[(d-\gamma)\,(m_c-m)\,(T-\tau) \right]^{-\frac1{(d-\gamma)\,(m_c-m)}} & \textrm{if}\;m \in (0,m_c)\,,\;\forall\,\tau \in [0,T)\,.
\end{cases}
\]
If $ m \ge m_c $ Barenblatt-type solutions are positive for all $\tau>0$. If $ m<m_c $ these solutions extinguish at $\tau=T$.

\medskip As already mentioned in Section~\ref{Sec:Intro}, we shall require that the initial datum \hbox{$u(0)=u_0$} is trapped between two Barenblatt profiles. More precisely:

\medskip\noindent {\bf (H1)} There exist positive constants $T$ and $C_1>C_2 $ such that
\[
U_{C_1,T}(0,y)\le u_0(y)\le U_{C_2,T}(0,y) \quad \forall\,y \in \RR^d\,.
\]
\noindent {\bf (H2)} There exist $ C \in [C_2,C_1]$ and $f\in\LL^{1,\gamma}(\RR^d)$ such that
\[
u_0(y)=U_{C,T}(0,y)+f(y) \quad \forall\,y \in \RR^d\,.
\]

If $m < m_c$ solutions with initial datum as above extinguish at $t=T<\infty$ as we shall deduce from the comparison principle (see Corollary~\ref{Lem:MP} and related comments below). Such solutions do do not belong to $\LL^{1,\gamma}(\RR^d)$. On the other hand, if $ m \ge m_c $ solutions are positive at all $\tau>0$. They belong to $\LL^{1,\gamma}(\RR^d) $ if in addition $m>m_c$. If $m>m_\ast$, with
\[
m_\ast:=\frac{d-4-2\,\beta+\gamma}{d-2-\beta} < m_c\,,
\]
Assumption (H2) is in fact a consequence of (H1). Indeed in such a range Barenblatt solutions may not be in $\LL^{1,\gamma}(\RR^d) $ but the difference of two Barenblatt profiles still belongs to $\LL^{1,\gamma}(\RR^d)$. On the contrary, if $m\le m_\ast$ then (H2) induces an additional restriction.

%%%%%%%%%%%%%%%%%%%%%%%%%%%%%%%%%%%%%%%%%%%%%%%%%%%%%%%%%%%%%%%%%%%%%%
\subsection{Existence, comparison and conservation of relative mass}\label{Sec:Basic}

In agreement with~\cite{HP}, we provide the following definition of a weak solution.
%---------------------------------------------------------------------
\begin{Def}\label{def-very-weak}
For a nonnegative $ u_0 \in \L^\infty(\RR^d)$, by a solution to~\eqref{FD} we mean a nonnegative function $ u \in C([0,\infty);\LL^{1,\gamma}_{\rm loc}(\RR^d)) \cap \LL^\infty(\RR^+ \times \RR^d) $ satisfying
\[\label{weak-sol-FDE}
-\int_{\RR^+} \int_{\RR^d} u\,\varphi_\tau\,|y|^{-\gamma} \rd y\,\rd \tau=\frac{1-m}{m}\,\int_{\RR^+} \int_{\RR^d} u^{m}\,\nabla \cdot \left( |y|^{-\beta}\,\nabla{\varphi} \right) \rd y\,\rd \tau
\]
for all $\varphi \in \mathcal{D}(\RR^+ \times \RR^d ) $ and $\lim_{\tau \to 0^+} u(\tau)=u_0$.\end{Def}
%---------------------------------------------------------------------
In~\cite{HP}, $(\beta,\gamma)=(0,0)$ and we point out that it is only required that $ u_0 \in \LL^1_{\rm loc}(\RR^d) $ and $u \in C([0,\infty);\LL^1_{\rm loc}(\RR^d))$. However, because of the weight $ |y|^{-\beta}$, \emph{a priori} the equation may not make sense, since in general $ u^m \not \in \LL^{1,\beta}_{\rm loc}(\RR^d)$. Hence, for simplicity, we also assume that initial data and solutions are globally bounded, as in the sequel we shall only deal with this kind of solutions.
%---------------------------------------------------------------------
\begin{Prop}[Existence] \label{prop: existence} Assume that $m\in(0,1)$. For any nonnegative $ u_0 \in \L^\infty(\RR^d) $ there exists a solution to~\eqref{FD} in the sense of the above definition.
\end{Prop}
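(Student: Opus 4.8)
The plan is to establish existence by a standard approximation–compactness scheme, regularizing both the degenerate/singular weights and the fast-diffusion nonlinearity, and then passing to the limit. Since the statement concerns nonnegative bounded initial data $u_0\in\L^\infty(\R^d)$, we may and do assume $\|u_0\|_{\L^\infty}\le M_0$ and, after a standard mollification, that $u_0$ is smooth and bounded below by a positive constant on compact sets.

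First I would set up the approximate problems. For $\varepsilon>0$ and $R>1$, work on the bounded annulus $\Omega_{\varepsilon,R}:=\{x: \varepsilon<|x|<R\}$ (this removes the singularity of $|y|^{-\beta}$ and $|y|^{-\gamma}$ at the origin and the unboundedness at infinity), replace $u^{m-1}$ by $(u+\varepsilon)^{m-1}$ (or, equivalently, work with the strictly parabolic equation obtained by truncating the diffusivity $m\,u^{m-1}$ between $\delta$ and $\delta^{-1}$), impose $u_0^\varepsilon:=u_0+\varepsilon$ as initial datum and homogeneous Neumann (no-flux) boundary conditions $|y|^{-\beta}u\,\nabla(u+\varepsilon)^{m-1}\cdot\nu=0$ on $\partial\Omega_{\varepsilon,R}$. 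On such a fixed annulus the weights $|y|^{-\gamma}$, $|y|^{-\beta}$ are smooth, bounded above and below, so the regularized equation is a uniformly parabolic quasilinear problem; classical theory (e.g.\ Ladyzhenskaya–Solonnikov–Uraltseva) yields a unique smooth solution $u_{\varepsilon,R}$. The maximum principle on $\Omega_{\varepsilon,R}$ gives the two-sided bound $\varepsilon\le u_{\varepsilon,R}\le M_0+\varepsilon$, uniformly in $\varepsilon$ and $R$; this is the crucial uniform $\L^\infty$ estimate. A standard energy identity (multiply by $(u_{\varepsilon,R}+\varepsilon)^{m-1}-$ a constant and integrate) gives a uniform bound on $\iint |y|^{-\beta}\,|u_{\varepsilon,R}|\,|\nabla(u_{\varepsilon,R}+\varepsilon)^{m-1}|^2$ on compact subsets, hence a uniform bound on $\nabla u_{\varepsilon,R}^m$ in $\L^2_{\rm loc}$ in space-time away from the origin.

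Next I would extract a limit. By the uniform $\L^\infty$ bound and the local gradient bound on $u_{\varepsilon,R}^m$, together with the equation giving a uniform bound on $\partial_\tau u_{\varepsilon,R}$ in a negative-order space locally, an Aubin–Lions / Riesz–Fréchet–Kolmogorov compactness argument produces a subsequence converging in $C([0,T];\L^1_{\rm loc}(\R^d\setminus\{0\}))$ and a.e.\ to some $u$ with $0\le u\le M_0$; a diagonal argument handles the joint limit $\varepsilon\to0$, $R\to\infty$. The a.e.\ convergence and dominated convergence (using the uniform $\L^\infty$ bound to control $u^m$ and the structure of the weak formulation, where test functions $\varphi\in\mathcal D(\R^+\times\R^d)$ are fixed) let us pass to the limit in the weak formulation of Definition~\ref{def-very-weak}. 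Here one must check that the weight $|y|^{-\beta}$ integrated against $\nabla\cdot(|y|^{-\beta}\nabla\varphi)$ causes no trouble: since $\varphi$ is smooth with compact support and $\beta<\frac{d-2}{d}\gamma<d-2<d$ (from~\eqref{parameters-1}), one has $\nabla\cdot(|y|^{-\beta}\nabla\varphi)\in\L^1_{\rm loc}(\R^d,\rd y)$ near the origin — indeed the worst term is $\nabla|y|^{-\beta}\cdot\nabla\varphi\sim|y|^{-\beta-1}$, which is locally integrable precisely because $\beta+1<d$ — so the right-hand side integral $\iint u^m\,\nabla\cdot(|y|^{-\beta}\nabla\varphi)\,\rd y\,\rd\tau$ is absolutely convergent and stable under the limit. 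Finally, the uniform equicontinuity in time gives $u\in C([0,\infty);\L^{1,\gamma}_{\rm loc}(\R^d))$ and $u(\tau)\to u_0$ as $\tau\to0^+$, e.g.\ testing the approximate equations against fixed $\varphi$ and letting $\tau\to0$, using that $u_0^\varepsilon\to u_0$ in $\L^1_{\rm loc}$.

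The main obstacle is the behavior at the origin: neither the local-in-space estimates away from $\{0\}$ nor the weak formulation automatically control what happens at the singular point of the weights, so one has to argue that no mass or flux concentrates there. The cleanest way is to exploit the uniform $\L^\infty$ bound together with the integrability $|y|^{-\beta}\in\L^1_{\rm loc}$ and $|y|^{-\gamma}\in\L^1_{\rm loc}$ (both follow from~\eqref{parameters-1}, since $\gamma<d$ and $\beta<d$), which makes the contribution of a small ball $B_\rho$ to every integral appearing in the weak formulation of size $O(\rho^{d-\max\{\beta,\gamma\}-1})\to0$; thus the limit function satisfies the identity of Definition~\ref{def-very-weak} with test functions that need not vanish near the origin. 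One should also note uniqueness is not asserted here — only existence — which is why it suffices to produce one such limit; comparison (Corollary~\ref{Lem:MP}) and further properties are treated subsequently. I would model the whole argument on the construction in~\cite{HP} for $(\beta,\gamma)=(0,0)$, indicating only the modifications forced by the weights, which is consistent with the paper's stated strategy of adapting known arguments from the unweighted case.
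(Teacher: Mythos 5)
Your proposal is correct in substance but runs at a lower level of detail than the paper, which handles the construction by reference rather than from scratch. The paper's proof consists of a single approximation step on the \emph{initial datum}: one truncates $u_0$ to $u_{0n}=\phi_n\,u_0\in\LL^{1,\gamma}(\RR^d)\cap\LL^\infty(\RR^d)$, invokes the ``standard $\LL^{1,\gamma}$ theory'' (the weighted analogue of the $\LL^1$ theory expounded in V\'azquez's book) to obtain the solutions $u_n$, and then passes to the limit $n\to\infty$ using the local $\LL^{1,\gamma}$-contraction estimates of~\cite[Lemma~3.1]{HP} together with the global bound $\|u_n(\tau)\|_\infty\le\|u_{0n}\|_\infty$. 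You instead unfold the whole construction: regularizing the domain (annuli $\Omega_{\varepsilon,R}$ to tame the weights at $0$ and $\infty$), the nonlinearity ($(u+\varepsilon)^{m-1}$), and the initial datum (mollification plus a floor $\varepsilon$), then invoking classical uniformly parabolic theory, deriving uniform $\L^\infty$ and local energy bounds, and passing to the limit by Aubin--Lions compactness and a diagonal argument. This is essentially what lies behind the cited references, so there is no mathematical conflict; what the paper buys by citing~\cite{HP,VazBook} is brevity and the avoidance of re-proving standard machinery, whereas your approach has the advantage of making explicit where the weights intervene --- in particular your observation that $\beta+1<d$ (a consequence of $\beta<\tfrac{d-2}{d}\gamma$ in~\eqref{parameters-1}) guarantees local integrability of $\nabla\cdot(|y|^{-\beta}\nabla\varphi)$ and rules out flux concentration at the origin, a point the paper leaves implicit. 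One minor point to tidy: you should make clear that in the compactness step the a.e.\ convergence away from the origin combined with the uniform $\L^\infty$ bound and the local integrability of $|y|^{-\gamma}$ (since $\gamma<d$) is what promotes convergence to $\LL^{1,\gamma}_{\rm loc}$ convergence \emph{across} the origin, not merely away from it.
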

%---------------------------------------------------------------------
\begin{proof} We refer the reader to the proof of~\cite[Theorem~2.1]{HP}: minor changes have to be implemented in order to adapt it to our weighted context. The basic idea consists in approximating the initial datum, \emph{e.g.},~with the sequence $ u_{0n}:=\phi_n\,u_0 \in \LL^{1,\gamma}(\RR^d) \cap \LL^\infty(\RR^d)$ where $\phi_n=\phi(\cdot/n)$ and $\phi$ is a smooth truncation function such that $0\le\phi\le1$, $\phi(x)=0$ if $|x|>2$ and $\phi(x)=1$ if $|x|\le1$. The corresponding sequence of solutions $ u_n $ is well defined in view of standard $\LL^{1,\gamma} $ theory (there is no additional difficulty due to the weights compared to the standard theory as exposed in~\cite{VazBook}). One can then pass to the limit on such a sequence by exploiting local $\LL^{1,\gamma} $ estimates (as in~\cite[Lemma~3.1]{HP}) along with the global bound $\| u_n(\tau) \|_{\infty} \le \| u_{0n} \|_\infty$, valid for all $\tau >0$.
\end{proof}
%------------------------------------------------------------------------------
\begin{Prop}[$\LL^{1,\gamma}$-contraction]\label{Prop:contraction} Assume that $m\in(0,1)$ and let $ u_{01}$, $u_{02} \in \LL^\infty(\RR^d) $ be any two nonnegative initial data with corresponding solutions $ u_1$,~$u_2$ to~\eqref{FD}, that are  constructed via the approximation scheme of the proof of Proposition~\ref{prop: existence}. Then
\[\label{eq:contraction}
\int_{\RR^d} \left[ u_1(\widetilde{\tau},y)-u_2(\widetilde{\tau},y) \right]_+ \frac{\dy}{|y|^\gamma} \le \int_{\RR^d} \left[ u_1(\tau,y)-u_2(\tau,y) \right]_+ \frac{\dy}{|y|^\gamma} \quad \forall\,\widetilde{\tau} \ge \tau \ge 0\,.
\]
\end{Prop}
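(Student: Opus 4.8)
The plan is to establish the $\LL^{1,\gamma}$-contraction property by exploiting the approximation scheme from Proposition~\ref{prop: existence} together with a standard contraction estimate at the level of the approximating problems, and then passing to the limit. Since the solutions $u_1$, $u_2$ are constructed as limits of $u_{1n}$, $u_{2n}$ solving~\eqref{FD} with truncated, bounded, integrable initial data $u_{01n}:=\phi_n\,u_{01}$ and $u_{02n}:=\phi_n\,u_{02}$, it suffices to prove the contraction for each $n$ and then take $n\to\infty$. The key point is that, within the classical $\LL^{1,\gamma}$-theory (as in~\cite{VazBook}, adapted to the weights), the fast diffusion operator $u\mapsto|y|^\gamma\,\nabla\cdot(|y|^{-\beta}\,u\,\nabla u^{m-1})$ generates an order-preserving semigroup which is a contraction in $\LL^{1,\gamma}(\RR^d)$; the weight $|y|^{-\gamma}$ is precisely the natural measure for which the equation $|y|^{-\gamma}u_\tau+\nabla\cdot(|y|^{-\beta}u\nabla u^{m-1})=0$ is a gradient flow / accretive evolution.

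First I would write the equation in the divergence form $|y|^{-\gamma}u_\tau=\Delta_{\beta}(u^m)$ (up to the constant $\tfrac{1-m}{m}$), where $\Delta_\beta\psi:=\nabla\cdot(|y|^{-\beta}\nabla\psi)$ is symmetric and nonpositive on $\LL^2(\RR^d,|y|^{-\gamma}dy)$ weighted suitably; then the abstract theory of $m$-accretive operators on $\LL^{1,\gamma}$ applies verbatim once one checks that the weights cause no new pathology away from the origin and are integrable enough near it for the relevant test-function computations — which is guaranteed here by the boundedness of the solutions and the assumption $\beta<\tfrac{d-2}{d}\gamma<\gamma<d$. The core computation is the formal one: multiplying the difference of the two equations by $|y|^{\gamma}\,\mathrm{sgn}_+(u_{1n}-u_{2n})$ (rigorously, by a smooth approximation of the positive-part sign function $p_\varepsilon$), one gets
\[
\frac{\rd}{\rd\tau}\int_{\RR^d}\left[u_{1n}-u_{2n}\right]_+\,\frac{\rd y}{|y|^\gamma}=\frac{1-m}{m}\int_{\RR^d}p_\varepsilon'(u_{1n}-u_{2n})\,\nabla(u_{1n}^m-u_{2n}^m)\cdot\nabla(u_{1n}-u_{2n})\,|y|^{-\beta}\,\rd y+o(1)\,,
\]
and the integrand on the right is nonnegative-definite in sign (because $s\mapsto s^m$ and $p_\varepsilon$ are nondecreasing, so $p_\varepsilon'\ge0$ and $\nabla(u^m_1-u^m_2)\cdot\nabla(u_1-u_2)$ has the same sign structure after the chain rule), whence the right-hand side is $\le 0$ in the limit $\varepsilon\to 0$. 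Integrating in $\tau$ from $\tau$ to $\widetilde\tau$ gives the desired monotonicity for the approximants. Then, using that $u_{in}\to u_i$ in $C([0,\infty);\LL^{1,\gamma}_{\rm loc}(\RR^d))$ together with a uniform tail estimate (the Barenblatt bounds of (H1), or more simply the local $\LL^{1,\gamma}$ estimates of~\cite[Lemma~3.1]{HP} combined with the sup-bound $\|u_n(\tau)\|_\infty\le\|u_{0n}\|_\infty$), one passes to the limit in the integral inequality by Fatou on the left and dominated convergence (or monotone control of the tails) to conclude.

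The main obstacle I anticipate is not the algebraic sign computation, which is classical, but rather the \emph{justification near the origin}: the weights $|y|^{-\gamma}$ and $|y|^{-\beta}$ are singular (or degenerate) at $y=0$, so one cannot directly invoke off-the-shelf $\LL^1$-contraction results, and the Steklov-type time regularization plus Kato-inequality argument must be carried out with test functions that also cut off a neighbourhood of the origin; one must check that the boundary terms produced by this spatial cutoff vanish as the cutoff radius tends to zero, which uses $\gamma<d$ and $\beta<\gamma$ so that $|y|^{-\gamma}$ and $|y|^{-\beta}$ are locally integrable, and the boundedness of $u_i$ so that $u_i^m$ is bounded and the flux $|y|^{-\beta}\nabla u_i^m$ is controlled in $\LL^1_{\rm loc}$. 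Once this localization is handled — essentially a repetition of the arguments already invoked for existence in Proposition~\ref{prop: existence} — the contraction follows. I would therefore structure the proof as: (1) reduce to smooth, strictly positive, compactly-supported-data approximants where the classical theory gives genuine strong solutions; (2) prove Kato's inequality $\partial_\tau[u_{1n}-u_{2n}]_+\le \tfrac{1-m}{m}\,\Delta_\beta\big((u_{1n}^m-u_{2n}^m)\,\mathbf 1_{\{u_{1n}>u_{2n}\}}\big)$ in the distributional sense, tested against a space-time cutoff avoiding the origin; (3) integrate and send the cutoffs to their limits; (4) pass $n\to\infty$.
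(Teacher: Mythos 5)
Your overall plan mirrors the paper's proof exactly: the authors simply state that the contraction holds for the approximants $u_{1,n},u_{2,n}$ ``as a consequence of the standard $\LL^{1,\gamma}$ theory'' and then pass to the limit. You fill in the actual Kato-type argument behind that phrase, including the localization near the origin, which the paper leaves entirely implicit. So in terms of strategy you are doing the same thing, just more explicitly.

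There is, however, one technical flaw in your displayed computation that you should correct, even though your final four-step outline indicates you know the right statement. You test the difference equation against $p_\varepsilon(u_{1n}-u_{2n})$ and assert that the resulting integrand $p_\varepsilon'(u_{1n}-u_{2n})\,\nabla(u_{1n}^m-u_{2n}^m)\cdot\nabla(u_{1n}-u_{2n})$ is pointwise nonnegative because ``$s\mapsto s^m$ and $p_\varepsilon$ are nondecreasing.'' This is not true: writing $Q=\nabla(u_1^m-u_2^m)\cdot\nabla(u_1-u_2)=m\,[\,u_1^{m-1}|\nabla u_1|^2+u_2^{m-1}|\nabla u_2|^2-(u_1^{m-1}+u_2^{m-1})\,\nabla u_1\cdot\nabla u_2\,]$, one sees that $Q$ can be negative (for instance when $u_1\ll u_2$, $\nabla u_1\cdot\nabla u_2>0$ and $|\nabla u_2|$ slightly exceeds $|\nabla u_1|$), and restricting to the thin band $\{0<u_1-u_2<\varepsilon\}$ does not repair it. The standard and correct version is to test against $p_\varepsilon(u_{1n}^m-u_{2n}^m)$, i.e.\ a smooth approximation of $\mathrm{sgn}_+$ composed with the difference of the \emph{nonlinearities}, not of the solutions. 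Then the quadratic term becomes $p_\varepsilon'(u_{1n}^m-u_{2n}^m)\,|\nabla(u_{1n}^m-u_{2n}^m)|^2\ge0$ with no ambiguity, and on the left-hand side one still recovers $\frac{\rd}{\rd\tau}\int[u_{1n}-u_{2n}]_+\,|y|^{-\gamma}\,\rd y$ in the limit $\varepsilon\to0$ because $s\mapsto s^m$ is strictly increasing, so $\mathrm{sgn}_+(u_{1n}^m-u_{2n}^m)=\mathrm{sgn}_+(u_{1n}-u_{2n})$ a.e. Your concluding Kato inequality $\partial_\tau[u_{1n}-u_{2n}]_+\le\tfrac{1-m}m\,\Delta_\beta\big((u_{1n}^m-u_{2n}^m)\mathbf 1_{\{u_{1n}>u_{2n}\}}\big)$ is indeed the right target statement and is obtained precisely by testing against $p_\varepsilon(u_{1n}^m-u_{2n}^m)$, so this is a write-up error rather than a wrong strategy; but as displayed the sign argument does not hold, and a reader would be right to object.

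The rest — using $\gamma<d$ and $\beta<\tfrac{d-2}{d}\gamma<d$ for local integrability of the weights, the space-time cutoff to avoid the origin and the control of the flux via boundedness of $u_i$, and the passage $n\to\infty$ via the $C([0,\infty);\LL^{1,\gamma}_{\mathrm{loc}})$ convergence plus tail control — is consistent with what the paper implicitly relies on and is sound.
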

%---------------------------------------------------------------------
\begin{proof} The inequality holds for the approximate solutions $ u_{1,n}$ and $ u_{2,n} $ still as a consequence of the standard $\LL^{1,\gamma} $ theory. Hence, the assertion just follows by taking limits as $ n \to \infty$.
\end{proof}
Proposition~\ref{Prop:contraction} trivially implies the following key comparison result.
%---------------------------------------------------------------------
\begin{Cor}[Comparison principle]\label{Lem:MP}
Under the same hypotheses as in Proposition~\ref{Prop:contraction}, if $ u_{01} \le u_{02} $, then $ u_1(\tau) \le u_2(\tau) $ for all $\tau \ge 0$.
\end{Cor}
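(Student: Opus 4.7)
The plan is to deduce the pointwise comparison from the $\L^{1,\gamma}$-contraction in Proposition~\ref{Prop:contraction} by specializing to $\tau = 0$. The assumption $u_{01} \le u_{02}$ a.e.\ means that the nonnegative function $[u_{01}(y) - u_{02}(y)]_+$ vanishes almost everywhere, so that
\[
\int_{\RR^d} \left[ u_{01}(y) - u_{02}(y) \right]_+ \frac{\dy}{|y|^\gamma} = 0.
\]
Since the weight $|y|^{-\gamma}$ is strictly positive and locally integrable away from the origin for $\gamma < d$, this is equivalent to vanishing in the Lebesgue sense on $\RR^d \setminus \{0\}$.

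Applying Proposition~\ref{Prop:contraction} with $\tau = 0$ and arbitrary $\widetilde\tau \ge 0$ then yields
\[
0 \le \int_{\RR^d} \left[ u_1(\widetilde\tau, y) - u_2(\widetilde\tau, y) \right]_+ \frac{\dy}{|y|^\gamma} \le \int_{\RR^d} \left[ u_{01}(y) - u_{02}(y) \right]_+ \frac{\dy}{|y|^\gamma} = 0,
\]
so the middle integral is zero. Because its integrand is nonnegative and the measure $|y|^{-\gamma}\,\dy$ assigns positive mass to every set of positive Lebesgue measure in $\RR^d\setminus\{0\}$, we conclude that $[u_1(\widetilde\tau, y) - u_2(\widetilde\tau, y)]_+ = 0$ a.e., i.e., $u_1(\widetilde\tau, y) \le u_2(\widetilde\tau, y)$ a.e. in $y$, for every $\widetilde\tau \ge 0$.

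There is no real obstacle here: the statement is an immediate consequence of the contraction property applied to the positive part of the difference, which is exactly the standard way comparison is extracted from $L^1$-contraction in the theory of nonlinear diffusion (\cf~\cite{VazBook}). The only minor point worth checking is the compatibility of the weighted measure with Lebesgue null sets, which is automatic in the admissible range of $\gamma$.
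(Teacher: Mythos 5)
Your argument is correct and is exactly the route the paper takes: the authors simply observe that Proposition~\ref{Prop:contraction} ``trivially implies'' the comparison principle, and your proof spells out that triviality — the contraction applied to $[u_1-u_2]_+$ from $\tau=0$, together with the mutual absolute continuity of $|y|^{-\gamma}\,\dy$ and Lebesgue measure for $\gamma<d$. Nothing is missing.
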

%---------------------------------------------------------------------
As the reader may note, we do not claim that we have a comparison principle (and hence a uniqueness result) for \emph{any} solutions in the sense of Definition~\ref{def-very-weak}, but only for those obtained as limits of $\LL^{1,\gamma} $ approximations. Nevertheless, in the sequel by \emph{solution} we shall tacitly mean the one constructed as in the proof of Proposition~\ref{prop: existence}, for which comparison holds. Since we consider initial data satisfying (H1), in order to conclude that the corresponding solutions are trapped between Barenblatt profiles at any time, one has to check that the self-similar solution given by~\eqref{Barenblatt.FPvars1} can also be obtained as a limit of $\LL^{1,\gamma} $ approximate solutions. This is a standard fact given the explicit profile of $ U_{C,T}$.

\smallskip Mass conservation is used in the range $m>m_c$ to determine the parameter~$C=C(M)$ which characterizes the Barenblatt profile $U_{C,T}$ having the same mass as $ u$. In the range $m\le m_c$ we can still prove that the quantity
\[
\int_{\RR^d}[u(\tau,y)-U_{C,T}(\tau,y)]\,|y|^{-\gamma} \rd y\,,
\]
which we shall refer to as \emph{relative mass}, is conserved at any $\tau>0$, even if $U_{C,T}(\tau)\not\in \LL^{1,\gamma}(\RR^d)$.
%------------------------------------------------------------------------------
\begin{Prop}[Conservation of relative mass]\label{prop:relconsmass} Assume that $m\in(0,1)$ and consider a solution $u$ of~\eqref{FD} with initial datum $u_0$ satisfying {\rm (H1)-(H2)}. Then
\[\label{integral.eq}
\int_{\RR^d}\left[u(\tau,y)-U_{C,T}(\tau,y)\right]\frac{\dy}{|y|^\gamma}
=\int_{\RR^d}\left[u_0(y)-U_{C,T}(0,y)\right]\frac{\dy}{|y|^\gamma} \quad \forall\,\tau \ge 0\,.
\]
\end{Prop}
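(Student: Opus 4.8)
The plan is to establish the conservation of relative mass by testing the weak formulation of the equation against a cutoff function and carefully controlling the error terms, using the sandwiching hypothesis (H1) to obtain uniform control of $u$ and $U_{C,T}$ near infinity and near the origin. First I would note that, by linearity of the weak formulation (Definition~\ref{def-very-weak}) and the fact that $U_{C,T}$ is an exact solution obtainable as an $\LL^{1,\gamma}$-limit of approximations, the function $\rho(\tau,y):=u(\tau,y)-U_{C,T}(\tau,y)$ satisfies, for all $\varphi\in\mathcal D(\RR^+\times\RR^d)$,
\[
-\int_{\RR^+}\int_{\RR^d}\rho\,\varphi_\tau\,|y|^{-\gamma}\,\rd y\,\rd\tau=\frac{1-m}{m}\int_{\RR^+}\int_{\RR^d}\big(u^m-U_{C,T}^m\big)\,\nabla\cdot\big(|y|^{-\beta}\,\nabla\varphi\big)\,\rd y\,\rd\tau\,.
\]
By (H1) and Corollary~\ref{Lem:MP}, $\rho(\tau,\cdot)$ is trapped between $\pm(U_{C_2,T}-U_{C_1,T})(\tau,\cdot)$ (up to relabeling), which for $m>m_\ast$ lies in $\LL^{1,\gamma}(\RR^d)$ uniformly on compact time intervals; for $m\le m_\ast$ one uses instead Assumption (H2), $f\in\LL^{1,\gamma}(\RR^d)$, propagated by the $\LL^{1,\gamma}$-contraction of Proposition~\ref{Prop:contraction} applied to $u$ and $U_{C,T}$.

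The core step is the choice of test function. I would take $\varphi(\tau,y)=\psi(\tau)\,\chi_R(y)$ where $\psi$ approximates the indicator of an interval $[\tau_1,\tau_2]$ (mollified at the endpoints) and $\chi_R(y)=\chi(|y|/R)$ is a smooth radial cutoff equal to $1$ on $B_R$ and supported in $B_{2R}$, with $|\nabla\chi_R|\lesssim R^{-1}$ and $|D^2\chi_R|\lesssim R^{-2}$. Then $\nabla\cdot(|y|^{-\beta}\nabla\chi_R)$ is supported in the annulus $B_{2R}\setminus B_R$ and satisfies $|\nabla\cdot(|y|^{-\beta}\nabla\chi_R)|\lesssim R^{-\beta-2}$ there. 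The right-hand side is therefore bounded by
\[
\frac{1-m}{m}\int_{\tau_1}^{\tau_2}\int_{B_{2R}\setminus B_R}\big|u^m-U_{C,T}^m\big|\,R^{-\beta-2}\,\rd y\,\rd\tau\,,
\]
and since $u,U_{C,T}\le U_{C_2,T}$ with $U_{C_2,T}(\tau,y)\sim c\,|y|^{-(2+\beta-\gamma)/(1-m)}$ as $|y|\to\infty$, one has $|u^m-U_{C,T}^m|\lesssim |y|^{-m(2+\beta-\gamma)/(1-m)}$ on the annulus. Integrating $R^{-\beta-2}\,|y|^{-m(2+\beta-\gamma)/(1-m)}$ over $B_{2R}\setminus B_R$ gives a bound of order $R^{-\beta-2+d-m(2+\beta-\gamma)/(1-m)}$, and a direct check using $m_c=(d-2-\beta)/(d-\gamma)$ shows this exponent is negative precisely because $m>m_c$ (when $m\le m_c$ the tail argument must be done with $\rho$ itself rather than with $u^m$ and $U_{C,T}^m$ separately, exploiting that $\rho\in\LL^{1,\gamma}$ by (H2); alternatively one integrates by parts once more to land on $\nabla\chi_R$ acting on $|y|^{-\beta}\nabla(u^m-U_{C,T}^m)$, but the cleanest route is the $\LL^{1,\gamma}$ bound on $\rho$). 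Hence the right-hand side tends to $0$ as $R\to\infty$, uniformly in the mollification of $\psi$, so letting $R\to\infty$ and then removing the mollification yields $\int_{\RR^d}\rho(\tau_2,y)\,|y|^{-\gamma}\rd y=\int_{\RR^d}\rho(\tau_1,y)\,|y|^{-\gamma}\rd y$; taking $\tau_1\to 0$ and using $u\in C([0,\infty);\LL^{1,\gamma}_{\rm loc})$ together with the dominated convergence furnished by the sandwich completes the argument.

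The main obstacle I expect is the behavior at the \emph{origin}, not at infinity. The cutoff $\chi_R$ controls the far field, but the weight $|y|^{-\beta}$ (and $|y|^{-\gamma}$) may be singular at $0$, and one must be sure that the integrals defining the weak formulation and the claimed conserved quantity actually converge near $y=0$ and that no boundary contribution is hidden there. Since $\beta<\frac{d-2}{d}\gamma<d-2<d$ under~\eqref{parameters-1}, the weight $|y|^{-\beta}$ is locally integrable, and $u$, $U_{C,T}$ are bounded near the origin (Barenblatt profiles are bounded there for $m<1$), so $u^m-U_{C,T}^m$ is bounded and $|y|^{-\beta}(u^m-U_{C,T}^m)$ is locally integrable; similarly $\gamma<d$ makes $|y|^{-\gamma}$ locally integrable. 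Thus no inner cutoff is needed and no boundary term at $0$ arises — but this point should be verified explicitly. The second, more technical, obstacle is justifying the manipulations for $m\le m_\ast$, where one genuinely needs (H2) and the $\LL^{1,\gamma}$-contraction to guarantee $\rho(\tau,\cdot)\in\LL^{1,\gamma}(\RR^d)$ for every $\tau$, since the individual profiles are not integrable; the contraction estimate of Proposition~\ref{Prop:contraction}, applied to the pair $(u,U_{C,T})$ in both orders, gives $\|\rho(\tau)\|_{1,\gamma}\le\|\rho(0)\|_{1,\gamma}=\|f\|_{1,\gamma}$, which supplies exactly the domination required to pass to the limit.
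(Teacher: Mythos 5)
Your overall strategy coincides with the paper's: test the weak formulation against a truncation $\phi_\lambda$ and show the annular error vanishes as $\lambda\to\infty$. However, the way you close the argument differs, and the branch $m\le m_c$ is left incomplete.

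For $m>m_c$ you estimate $|u^m-U_{C,T}^m|$ directly from the upper Barenblatt $U_{C_2,T}$, obtaining a power of $R$ that is negative precisely when $m>m_c$. This is correct and is a valid alternative, but it is not how the paper proceeds, and it is not available for $m\le m_c$. In the range $m\le m_c$ you correctly identify that one must pass through $\rho:=u-U_{C,T}\in\LL^{1,\gamma}$, propagated by the $\LL^{1,\gamma}$-contraction, but you never write down the mechanism that turns $u^m-U_{C,T}^m$ into something controlled by $\rho$. The paper's device, which in fact works uniformly for all $m\in(0,1)$ and renders the bifurcation at $m_c$ unnecessary, is the mean value inequality
\[
\left|u^m-U_{C,T}^m\right|\le m\,U_{C_1,T}^{m-1}\left|u-U_{C,T}\right|\,,
\]
legitimate because $m-1<0$ and, by (H1) and comparison, both $u$ and $U_{C,T}$ lie above $U_{C_1,T}$. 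One then checks the weight algebra: on $B_{2\lambda}\setminus B_\lambda$ one has $U_{C_1,T}^{m-1}\sim\lambda^{2+\beta-\gamma}$, $|\Delta\phi_\lambda|\sim\lambda^{-2}$ and $|\nabla\phi_\lambda|\sim\lambda^{-1}$, so that
\[
U_{C_1,T}^{m-1}\left(|y|^{-\beta}\left|\Delta\phi_\lambda\right|+\beta\,|y|^{-\beta-1}\left|\nabla\phi_\lambda\right|\right)\le c\,|y|^{-\gamma}
\]
with $c$ independent of $\lambda$. The annular error is therefore bounded by $c\int_{B_{2\lambda}\setminus B_\lambda}|\rho|\,|y|^{-\gamma}\,dy$, a tail of an $\LL^{1,\gamma}$ function, which vanishes as $\lambda\to\infty$. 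Without this pointwise estimate your $m\le m_c$ branch does not actually close; you name the right tool ($\rho\in\LL^{1,\gamma}$) but give no way to apply it to the integral at hand, so there is a genuine gap there. Your discussion of the origin (local integrability of $|y|^{-\beta}$ and $|y|^{-\gamma}$, boundedness of the profiles near $0$) is correct and indeed worth stating, though the paper does not belabor it.
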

%------------------------------------------------------------------------------
\begin{proof} We proceed along the lines of the proof of~\cite[Proposition~1]{BBDGV}. That is, let~$\phi$ be a $C^2(\RR^+)$ function such that $0\le\phi\le1$, $\phi(x)=0$ if $|x|>2$ and $\phi(x)=1$ if $|x|\le1$. For any $\lambda>0$, set $\phi_\lambda(y):=\phi(|y|/\lambda)$. Then
\begin{multline*}
\left|\dfrac{\rd}{\rd\tau}\int_{\RR^d}\left[u(\tau,t)-U_{C,T}(\tau,y)\right] \phi_\lambda\,\frac{\rd y}{|y|^\gamma}\right| \\
=\,\frac{1-m}{m}\left|\int_{B_{2\lambda}\setminus B_\lambda} \left[u^m(\tau,y)-U_{C,T}^m(\tau,y) \right] \nabla \cdot \left(|y|^{-\beta}\,\nabla \phi_\lambda \right) \rd y \right|\hspace*{2cm}\\
\le\,(1-m)\,\int_{B_{2\lambda}\setminus B_\lambda}U_{C_1,T}^{m-1}\,\left|u-U_{C,T}\right|\left( |y|^{-\beta} \left|\Delta\phi_\lambda\right| + \beta\,|y|^{-\beta-1} \left| \nabla \phi_\lambda \right| \right) \rd y\,.
\end{multline*}
As $\lambda\to\infty$, we observe that $U_{C_1,T}^{m-1}$, $|\Delta\phi_\lambda|$ and $ |\nabla \phi_\lambda| $ behave like $\lambda^{2+\beta-\gamma}$, $\lambda^{-2}$ and $\lambda^{-1}$, respectively, in the region $B_{2\lambda}\setminus B_\lambda$. In particular,
\[
U_{C_1,T}^{m-1}\left( |y|^{-\beta} \left|\Delta\phi_\lambda\right| + \beta\,|y|^{-\beta-1} \left| \nabla \phi_\lambda \right| \right) \le c\,|y|^{-\gamma} \quad \forall\,y \in B_{2\lambda}\setminus B_\lambda
\]
for a suitable $c>0$ independent of $\lambda$. Hence, for all $\tau_2> \tau_1 \ge 0$ we deduce that
\begin{multline*}\label{eq2.2-contr}
\left| \int_{\RR^d}\left[u(\tau_2,y)-U_{C,T}(\tau_2,y)\right] \phi_\lambda\,\frac{\rd y}{|y|^\gamma}
-\,\int_{\RR^d}\left[u(\tau_1,y)-U_{C,T}(\tau_1,y)\right] \phi_\lambda\,\frac{\rd y}{|y|^\gamma}\right| \\
\le\,(1-m)\,c\,\int_{\tau_1}^{\tau_2} \int_{B_{2\lambda}\setminus B_\lambda} \left| u(\tau,y)-U_{C,T}(\tau,y) \right| \frac{\rd y}{|y|^\gamma}\,\rd\tau\,.
\end{multline*}
The $\LL^{1,\gamma}$-contraction of Proposition~\ref{Prop:contraction} ensures that the r.h.s.~vanishes as~\hbox{$\lambda\to\infty$}.

\smallskip Actually, since \emph{a priori} $ u_\tau $ does not exist as a function, we have to use a test function $\phi_\lambda$ that depends on time and whose time derivative approximates the difference between two Dirac deltas at times $\tau_2 $ and $\tau_1$. However, this is a standard technicality, which we omitted in order to make the proof more readable.\end{proof}

%%%%%%%%%%%%%%%%%%%%%%%%%%%%%%%%%%%%%%%%%%%%%%%%%%%%%%%%%%%%%%%%%%%%%%
\subsection{Self-similar variables: a nonlinear Fokker-Plank equation}\label{sect: nonFP}

As already discussed in~\cite{BBDGV,BDMN2016a} and outlined in Section~\ref{Sec:Intro}, it is convenient to analyse the asymptotic behaviour of solutions to~\eqref{FD} whose initial data comply with (H1)-(H2) by means of a suitable time-space change of variables that makes Barenblatt profiles stationary.

Let us rescale the function $u$ according to
\[\label{eq: resc}
u(\tau,y)=R(\tau)^{\gamma-d}\,v\left( \log \frac {R(\tau)} {R(0)},\frac y {R(\tau)} \right), \quad t=\log \frac {R(\tau)} {R(0)}\,, \quad x=\frac y {R(\tau)}\,,
\]
where $R$ is defined by~\eqref{eq: R-tau}. Similarly, the Barenblatt solution $U_{C,T}$ is transformed into $\mathfrak B$ as defined by~\eqref{Barenblatt-intro2}. Straightforward computations show that $v$ solves the nonlinear, weighted Fokker-Plank equation
\be{eq.FDE.FP}
|x|^{-\gamma}\,v_t=-\,\nabla\cdot\Big(|x|^{-\beta}\,v\,\nabla \left(v^{m-1}-\mathfrak B^{m-1} \right)\Big)\quad \forall\,(t,x) \in \mathbb{R}^+ \times \mathbb{R}^d\,.
\ee
In terms of the initial datum $v_0(x)=R(0)^{d-\gamma}\,u_0(R(0)\,x)$, conditions (H1)-(H2) can be rewritten as follows:

\medskip\noindent {\bf(H1')} There exist positive constants $C_1> C_2$ such that
\[
\label{eq:assumptionv}
\mathfrak B_1(x)=\left(C_1+|x|^{2+\beta-\gamma}\right)^{-\frac1{1-m}}\le v_0(x)\le \left(C_2+|x|^{2+\beta-\gamma}\right)^{-\frac1{1-m}}=\mathfrak B_2(x)
\]
for all $x \in\RR^d$.

\smallskip\noindent {\bf(H2')} There exist $C \in [C_2,C_1]$ and $f\in\LL^{1,\gamma}(\RR^d)$ such that
\[ \label{eq:assumptionmsmallermstarv}
v_0(x)=\mathfrak B(x)+f(x) \quad \forall\,x \in\RR^d\,.
\]
Assumption (H1') is nothing else than~\eqref{Ineq:sandwiched}. Note that, for greater readability, in (H2') we have replaced $ R(0)^{d-\gamma}\,f(R(0)\,x) $ with $f(x)$ again. As mentioned in Section~\ref{Sec:Asymptotic}, if $m>m_\ast$ the difference $\mathfrak B_2 -\,\mathfrak B_1 $ is in $\LL^{1,\gamma}(\RR^d)$, so that (H2') is implied by (H1') and the map $ C \mapsto \int_{\RR^d} (v_0-\mathfrak B)\,|x|^{-\gamma}\dx $ is continuous, monotone increasing and changes sign in $[C_2,C_1]$. Hence, in this case there exists a unique $C\in[C_2,C_1]$ such that
\[\label{eq: zero-rel-mass}
\int_{\RR^d}\left(v_0-\mathfrak B \right) \frac{\dx}{|x|^\gamma}=0\,.
\]
It is clear that, as a consequence of Proposition~\ref{prop:relconsmass}, under assumptions (H1')-(H2') the relative mass of $v$ is also conserved, that is
\[\label{eq: cons-rel-mass-v}
\int_{\RR^d} \left[v(t,x)-\mathfrak B(x) \right]\frac{\rd x}{|x|^\gamma}=\int_{\RR^d}\left(v_0-\mathfrak B \right)\frac{\rd x}{|x|^\gamma} \quad \forall\,t>0
\]
provided $\int_{\RR^d}|v_0-\mathfrak B|\,{|x|^{-\gamma}} \rd x $ is finite for some $C>0$. If $ m > m_\ast $, we deduce that
\[\label{eq: cons-rel-mass-v-zero}
\int_{\RR^d} \left[v(t,x)-\mathfrak B(x)\right] \frac{\rd x}{|x|^\gamma}=0 \quad \forall\,t > 0\,.
\]
On the other hand, if $ m \le m_\ast $ we cannot ensure that this identity still holds, but, nevertheless, the conservation of relative mass is still true and reads
\[
\int_{\RR^d} \left[v(t,x)-\mathfrak B(x) \right] \frac{\rd x}{|x|^\gamma}=\int_{\RR^d} f\,\frac{\rd x}{|x|^\gamma} \quad \forall\,t>0\,,
\]
where the r.h.s.~does not necessarily takes the value $0$.

%%%%%%%%%%%%%%%%%%%%%%%%%%%%%%%%%%%%%%%%%%%%%%%%%%%%%%%%%%%%%%%%%%%%%%
\subsection{The relative error: a nonlinear Ornstein-Uhlenbeck equation}

Consider a solution $v$ of~\eqref{eq.FDE.FP} corresponding to an initial datum that satisfies (H1')-(H2'). As in~\cite[Section~2.3]{BBDGV},
let us introduce the ratio
\[\label{def.w}
w(t,x):=\frac{v(t,x)}{\mathfrak B(x)} \quad \forall\,(t,x) \in \RR^+ \times \RR^d\,.
\]
The difference $ w-1 $ is usually referred to as \emph{relative error} between $v$ and $\mathfrak B$. In view of~\eqref{eq.FDE.FP}, it is straightforward to check that $w$ is a solution to
\be{eq.FDE.OU}
\begin{cases}
|x|^{-\gamma}\,w_t=-\,\frac1{\mathfrak B}\,\nabla\cdot\Big(|x|^{-\beta}\,\mathfrak B\,w\,\nabla \left((w^{m-1}-1)\,\mathfrak B^{m-1}\right)\Big) & {\rm in} \ \RR^+ \times\RR^d\,, \\[6pt]
\displaystyle w(0,\cdot)=w_0:=\frac{v_0}{\mathfrak B} & {\rm in} \ \RR^d\,,
\end{cases}
\ee
which can be seen as a nonlinear, weighted equation of Ornstein-Uhlenbeck type. Let us also define the quantities
\[
W_1:=\inf_{x \in \RR^d} \frac{\mathfrak B_1(x)}{\mathfrak B(x)} \le \sup_{x\in\RR^d} \frac{\mathfrak B_2(x)}{\mathfrak B(x)}=: W_2\,.
\]
A straightforward calculation yields
\[
0 \ < \ W_1=\left(\frac{C}{C_1}\right)^{\frac1{1-m}} \ \le \ 1 \ \le \ \left(\frac{C}{C_2}\right)^{\frac1{1-m}}=W_2 \ < \infty\,.
\]
In terms of $w_0$, assumptions (H1') and (H2') can in turn be rewritten as follows:

\medskip\noindent {\bf(H1'')} There exist positive constants $C_1>C_2$ and $ C \in [C_2,C_1] $ such that
\[
W_1\le\frac{\mathfrak B_1(x)}{\mathfrak B(x)} \le w_0(x) \le \frac{\mathfrak B_2(x)}{\mathfrak B(x)} \le W_2 \quad \forall\,x \in \RR^d\,.
\]

\smallskip\noindent {\bf(H2'')} There exists $f\in \LL^{1,\gamma}(\RR^d)$ such that
\[
w_0(x)=1+\frac{f(x)}{\mathfrak B(x)} \quad \forall\,x \in \RR^d\,.
\]
Note that, as a consequence of the $\LL^{1,\gamma}$-contraction estimate and the comparison principle (Proposition~\ref{Prop:contraction} and Corollary~\ref{Lem:MP}), assumptions (H1'')-(H2'') are satisfied by a solution~$w(t)$ of~\eqref{eq.FDE.OU} at any $ t>0$ if they are satisfied by $w_0$, for a suitable $f$ depending also on $t$ (clearly the same holds for $v$, with respect to (H1')-(H2')).

%%%%%%%%%%%%%%%%%%%%%%%%%%%%%%%%%%%%%%%%%%%%%%%%%%%%%%%%%%%%%%%%%%%%%%
%%%%%%%%%%%%%%%%%%%%%%%%%%%%%%%%%%%%%%%%%%%%%%%%%%%%%%%%%%%%%%%%%%%%%%
\section{Regularity, relative uniform convergence and asymptotic rates}\label{Sec:Main}

The goal of this section is to show that the relative error $w(t)-1$ converges to zero \emph{uniformly} as $ t \to \infty$. Then in Section~\ref{sect: hp-rates}, by taking advantage of this result, we shall prove that such convergence occurs with explicit \emph{exponential rates}.\newpage

%%%%%%%%%%%%%%%%%%%%%%%%%%%%%%%%%%%%%%%%%%%%%%%%%%%%%%%%%%%%%%%%%%%%%%
\subsection{Global regularity estimates: convergence without rates}\label{Sec:Regularity}

%%%%%%%%%%%%%%%%%%%%%%%%%%%%%%%%%%%%%%%%%%%%%%%%%%%%%%%%%%%%%%%%%%%%%%
\subsubsection*{\bf a) From local to global estimates}\label{sect: glob-reg}
By exploiting similar scaling techniques as in~\cite[Section~2.4]{BBDGV}, we use the regularity results of the Appendix in order to get \emph{global} regularity estimates for $w$.
%---------------------------------------------------------------------
\begin{Lem}\label{lem:holdereg} Assume that $m\in(0,1)$. Let $ w $ be the solution of~\eqref{eq.FDE.OU} corresponding to an initial datum $w_0$ satisfying {\rm (H1'')-(H2'')}. Then there exist $\etanu \in(0,1) $ and a positive constant $ \mathcal K>0$, depending on $d$, $m$, $\beta$, $\gamma$, $C$, $C_1$, $C_2$ such that:
\be{eq: scaling-grad-v}
\left\| \nabla v(t) \right\|_{\LL^\infty(B_{2\lambda} \setminus B_{\lambda})} \le \frac{Q_1}{\lambda^{\frac{2+\beta-\gamma}{1-m}+1}} \quad \forall\,t \ge 1\,,\quad\forall\,\lambda>1\,,
\ee
\be{eq: est-k-glob}
\sup_{t\ge 1} \| w \|_{C^k( (t,t+1) \times B_{\varepsilon}^c)} < \infty \quad \forall\,k \in \mathbb{N}\,, \ \forall\,\varepsilon>0\,,
\ee
\be{eq: est-alfa-glob}
\sup_{t\ge 1} \|w(t)\|_{C^\etanu(\RR^d)} < \infty\,,
\ee
\be{eq:mathcalH}
\sup_{\tau\ge t} \left|w(\tau)-1 \right|_{C^\etanu(\RR^d)} \le \mathcal K\,\sup_{\tau\ge t} \left\| w(\tau)-1 \right\|_{\LL^\infty(\RR^d)} \quad \forall\,t\ge 1\,,
\ee
where $v$ is the solution of~\eqref{eq.FDE.FP} corresponding to the initial datum $ v_0=w_0\,\mathfrak B$.
\end{Lem}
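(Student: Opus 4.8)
The plan is to reduce the global estimates to the local, interior regularity results of the Appendix by a scaling argument that matches the structure of the equation. First I would record the basic a priori bounds that come for free: by (H1'') and the comparison principle (Corollary~\ref{Lem:MP}), $\mathfrak B_1 \le v(t) \le \mathfrak B_2$ for all $t\ge 0$, hence $W_1 \le w(t) \le W_2$ pointwise, so $w$ and $v$ are globally bounded above and (locally, away from infinity) below. Away from the origin, equation~\eqref{eq.FDE.OU} is a uniformly parabolic quasilinear equation with smooth coefficients on $(0,\infty)\times(\RR^d\setminus\{0\})$ and $w$ is bounded between two positive constants there, so standard interior parabolic regularity (Krylov–Safonov, then Schauder bootstrap) gives~\eqref{eq: est-k-glob} directly on any region $B_\eps^c$; the uniformity in $t\ge 1$ is because the coefficients and the bounds on $w$ are translation-invariant in $t$ and locally uniform in $x$ on $B_\eps^c$. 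The only genuinely delicate point is a neighborhood of the origin, where $|x|^{-\gamma}$ and $|x|^{-\beta}$ degenerate or blow up; this is precisely the situation covered by the Appendix (following \cite{ChSe85}), which I would invoke to get a uniform local $C^{\etanu}$ bound for bounded solutions of the linearized equation near $0$.

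Next, for~\eqref{eq: scaling-grad-v}, I would fix $\lambda>1$ and rescale. On the annulus $B_{2\lambda}\setminus B_\lambda$ the Barenblatt profile satisfies $\mathfrak B(x) \sim \lambda^{-(2+\beta-\gamma)/(1-m)}$ and $v$ is comparable to $\mathfrak B$, so the natural rescaling is $\tilde v(t,x) := \lambda^{(2+\beta-\gamma)/(1-m)} \, v(t, \lambda x)$, which is of order one on $B_2\setminus B_1$. Because~\eqref{eq.FDE.FP} is invariant (up to the weights, which are homogeneous) under this parabolic-type scaling — this is the content of ``similar scaling techniques as in~\cite[Section~2.4]{BBDGV}'' — $\tilde v$ solves an equation of the same form on a fixed annulus, with coefficients bounded independently of $\lambda$ and with $\tilde v$ bounded above and below by positive constants independent of $\lambda$. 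Interior parabolic regularity on $B_2\setminus B_1$ (which is away from the origin, so no weight issues) then gives $\|\nabla \tilde v(t)\|_{\LL^\infty(B_2\setminus B_1)} \le Q_1$ for $t\ge 1$ with $Q_1$ independent of $\lambda$; undoing the scaling and using $\nabla v(t,\lambda x) = \lambda^{-(2+\beta-\gamma)/(1-m)-1} \nabla \tilde v(t,x)$ produces exactly~\eqref{eq: scaling-grad-v}. Estimate~\eqref{eq: est-alfa-glob} is then obtained by patching: the $C^{\etanu}$ bound near the origin (from the Appendix) and the uniform-in-$\lambda$ Hölder bounds on the annuli $B_{2\lambda}\setminus B_\lambda$ (from the same rescaling, noting that $w = v/\mathfrak B$ and $\mathfrak B$ is itself smooth and comparable to a constant on each annulus, so $\osc_{B_{2\lambda}\setminus B_\lambda} w$ is controlled with the right homogeneity) combine to a global uniform $C^{\etanu}$ bound; since $w\to 1$ as $|x|\to\infty$, the Hölder seminorm does not accumulate at infinity.

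Finally,~\eqref{eq:mathcalH} is the linearized refinement of~\eqref{eq: est-alfa-glob}. Here I would write the equation satisfied by $w-1$: subtracting the stationary equation, $w-1$ solves a \emph{linear} weighted parabolic equation whose coefficients depend on $w$ (hence are controlled, uniformly in $t\ge 1$, by the bounds already established) and which is homogeneous in $w-1$. Applying the same two ingredients — interior parabolic estimates away from the origin, after rescaling on the annuli, and the Appendix result near the origin — to this \emph{linear} equation yields a Hölder seminorm bound that is \emph{proportional} to the sup norm of the solution, i.e. $\sup_{\tau\ge t}|w(\tau)-1|_{C^{\etanu}} \le \mathcal K \sup_{\tau \ge t-1}\|w(\tau)-1\|_{\LL^\infty}$; a short argument absorbing the time shift (or simply restating with $t\ge 1$ and using that the right-hand side is monotone nonincreasing in $t$) gives the stated form. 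The main obstacle, and the reason an Appendix is needed, is the regularity near $x=0$: the weights prevent a direct appeal to classical parabolic theory, and one must run the De Giorgi–Nash–Moser / Chiarenza–Serapioni iteration by hand in the weighted setting to get the uniform modulus of continuity at the origin with the correct scaling behavior; everything else is a routine, if careful, bootstrap and rescaling exercise.
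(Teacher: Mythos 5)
Your proposal is correct and takes essentially the same approach as the paper: the comparison principle traps $v$ between Barenblatt profiles, the rescaling $v_\lambda(t,x)=\lambda^{(2+\beta-\gamma)/(1-m)}v(t,\lambda x)$ on annuli reduces to uniform interior parabolic estimates on $B_2\setminus B_1$, the Chiarenza--Serapioni type result from the Appendix handles the origin, and the linearized bound~\eqref{eq:mathcalH} is obtained by writing a linear weighted parabolic equation for the relative error and invoking the same two ingredients. One point the paper handles with more care, which you would discover on writing out the computation, is that it derives the linear equation for $z=v-\mathfrak B$ rather than for $w-1$ directly (the $z$-equation is exactly in the divergence form $|x|^{-\gamma}z_t=\nabla\cdot[|x|^{-\beta}a(\nabla z+Bz)]$ needed for Corollary~\ref{Thm.C.alpha.lin-2}, while the $w-1$ equation picks up extra terms from the $1/\mathfrak B$ factor), and then converts the $z$-estimates into estimates on $w-1=z/\mathfrak B$ annulus by annulus using~\eqref{eq: scaling-B-est}, which is what produces the crucial $\lambda^{-\etanu}$ decay that lets the local H\"older seminorms be summed into a global bound on $\RR^d$.
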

%---------------------------------------------------------------------
\begin{proof} For all $\lambda>1$, let us consider the following rescaling:
\[
v_\lambda(t,x):=\lambda^{\frac{2+\beta-\gamma}{1-m}}\,v(t,\lambda\,x) \quad \forall\,(t,x) \in \RR^+ \times \RR^d\,.
\]
It is straightforward to check that $ v_\lambda $ satisfies the same equation as $ v$, with initial datum $ (v_0)_\lambda$. In particular, since $ (v_0)_\lambda $ is bounded and bounded away from zero in $ B_2 \setminus B_{\varepsilon/2} $ independently of $\lambda $ (consequence of (H1')), in view of standard parabolic regularity there holds
\[\label{eq: scaling-v-1}
\left | v_\lambda \right |_{C^k((t,t+1) \times (B_1 \setminus B_{\varepsilon}))} \le Q_k \quad \forall\,t \ge 1
\]
for all $k \in \mathbb{N} $ and some $ Q_k>0$ depending only on $d$, $m$, $\beta$, $\gamma$, $C$, $C_1$, $C_2$ and $\varepsilon$, but independent of $\lambda $. By undoing the scaling, this is equivalent to
\[\label{eq: scaling-v-2}
\left | v \right |_{C^k_x((t,t+1) \times (B_{\lambda} \setminus B_{\varepsilon \lambda}))} \le \frac{Q_k}{\lambda^{\frac{2+\beta-\gamma}{1-m}+k}}\,, \quad \left | v \right |_{C^k_\tau((t,t+1) \times (B_{\lambda} \setminus B_{\varepsilon \lambda}))} \le \frac{Q_k}{\lambda^{\frac{2+\beta-\gamma}{1-m}}} \quad \forall\,t \ge 1\,,
\]
where by $ C^k_x $ and $ C^k_\tau $ we mean partial derivatives restricted to space and time, respectively. As a special case, this proves~\eqref{eq: scaling-grad-v} and~\eqref{eq: est-k-glob} upon observing that
\be{eq: scaling-B-est}
\left\| \mathfrak B^{-1} \right\|_{\LL^\infty(B_{\lambda} \setminus B_{\varepsilon \lambda})} \approx \lambda^{\frac{2+\beta-\gamma}{1-m}}\,, \quad \left | \mathfrak B \right |_{C^k(B_{\lambda} \setminus B_{\varepsilon \lambda})} \approx \lambda^{-\frac{2+\beta-\gamma}{1-m}-k}\,.
\ee
As for proving~\eqref{eq: est-alfa-glob}, it is enough to notice that, for some $\etanu \in (0,1) $ and another constant $ Q_\etanu>0$ depending on the same quantities as $ Q_k $, with the exception of $\varepsilon $, the estimate
\[\label{eq: scaling-v-3}
\left| v(t) \right|_{C^\etanu(B_{1/2})} \le Q_\etanu \quad \forall\,t \ge 1
\]
follows from the regularity results of the Appendix. Indeed, since $ v $ is bounded and bounded away from zero in $\RR^+ \times B_1$, we can apply Corollary~\ref{Thm.C.alpha.lin-2} with the choices
\[\label{eq: scaling-v-2-ch}
a(t,x)=(1-m)\,v^{m-1}(t,x)\,, \quad B(t,x)=-\,\frac{2+\beta-\gamma}{1-m}\,x\,|x|^{-\gamma}\,v^{1-m}(t,x)\,.
\]

\smallskip As for~\eqref{eq:mathcalH}, let $ z(t,x):=v(t,x)-\mathfrak B(x)$. Straightforward computations show that the equation solved by $z$ reads
\[\label{eq: eq-h-1}
|x|^{-\gamma}\,z_t=\nabla \cdot \left[|x|^{-\beta}\,a(t,x) \left(\nabla z + B(t,x)\,z \right) \right]\,,
\]
with the same function $a$ as above and
\[\label{eq: eq-h-coeff}
B(t,x)=v^{1-m}(t,x) \left( \frac{v^{m-1}(t,x) -\,\mathfrak B^{m-1}(x)}{v(t,x)-\mathfrak B(x)} -\,\mathfrak B^{m-2}(x) \right) \nabla \mathfrak B(x)\,.
\]
We are therefore again in position to use Corollary~\ref{Thm.C.alpha.lin-2} to get 
\[\label{eq: eq-h-2}
| z(t) |_{C^\etanu(B_{2^{-1/\alpha}})} \le \mathcal K \left\| z \right\|_{\LL^\infty ((t+2, t+3) \times B_{2^{2/\alpha}})} \quad \forall\,t \ge 1
\]
for some $ \mathcal K>0$ depending on $d$, $m$, $\beta$, $\gamma$, $C$, $C_1$, $C_2$. From here on $ \mathcal K $ will denote a general positive constant, which may change from line to line. Corollary~\ref{Thm.C.alpha.lin-2} holds with inessential modifications if one replaces balls with annuli. By performing scalings, we deduce that
\[\label{eq: eq-h-3}
| z(t) |_{C^\etanu(B_{\lambda} \setminus B_{\lambda/2})} \le \mathcal K\,\lambda^{-\etanu} \left\| z \right\|_{\LL^\infty ((t+2, t+3) \times (B_{8\lambda} \setminus B_{2\lambda}) )} \quad \forall\,t \ge 1\,.
\]
By standard computations and~\eqref{eq: scaling-B-est}, which holds even if $ k $ is not an integer, and the identity $ w(t)-1=z(t)/\mathfrak B $, we obtain
\[\label{eq: eq-h-4}
\begin{aligned}
 &\hspace*{-24pt} |w(t)-1|_{C^\etanu(B_{\lambda} \setminus B_{\lambda/2})} \\
\le &\, \left\| \mathfrak B^{-1} \right\|_{\LL^\infty(B_{\lambda} \setminus B_{\lambda/2})} |z(t)|_{C^\etanu(B_{\lambda} \setminus B_{\lambda/2})} \\
&\quad + \left\| \mathfrak B^{-1} \right\|_{\LL^\infty(B_{\lambda} \setminus B_{\lambda/2})}^2 |\mathfrak B|_{C^\etanu(B_{\lambda} \setminus B_{\lambda/2})}\,\| z(t) \|_{\LL^\infty(B_{\lambda} \setminus B_{\lambda/2})} \\
\le &\, \mathcal K\,\lambda^{\frac{2+\beta-\gamma}{1-m}-\etanu} \left( \left\| z \right\|_{\LL^\infty ((t+2, t+3) \times (B_{8\lambda} \setminus B_{2\lambda}) )} + \| z(t) \|_{\LL^\infty(B_{\lambda} \setminus B_{\lambda/2})} \right) \\
 \le &\, \mathcal K\,\lambda^{-\etanu} \left\| w-1 \right\|_{\LL^\infty ((t, t+3) \times (B_{8\lambda} \setminus B_{2\lambda}) )}.
\end{aligned}
\]
By taking $\lambda=2^{j/\alpha}$, with $j\ge -1$ integer, we conclude the proof of~\eqref{eq:mathcalH}. \end{proof}

%---------------------------------------------------------------------
\begin{Rem}\label{rem: reg} It is important to point out that the applicability of the results of the Appendix with coefficients $a$ and $B$ as in the proof of Lemma~\ref{lem:holdereg} relies on the fact that, under assumptions (H1')-(H2'), $v$ is locally bounded and bounded away from zero. In other words, we do not claim that we have a Harnack inequality for general solutions to the \emph{degenerate/singular} Equation~\eqref{FD-FP}.\end{Rem}
%---------------------------------------------------------------------

%%%%%%%%%%%%%%%%%%%%%%%%%%%%%%%%%%%%%%%%%%%%%%%%%%%%%%%%%%%%%%%%%%%%%%
\subsubsection*{\bf b) The relative free energy and Fisher information} \label{sect: rig-ent}
By proceeding along the lines of~\cite[Section~2.5]{BBDGV}, we redefine the relative \emph{free energy} functional as
\[\label{Entropy}
\mathcal{F}[w]=\frac1{m-1}\int_{\RR^d}\left[w^m-1-m\,(w-1)\right]\frac{\mathfrak B^m}{|x|^\gamma}\dx\,,
\]
with a slight abuse of notations in the sense that we consider it as a functional acting on $w=v/\mathfrak B$. Again, if we formally derive $\mathcal{F}[w(t)]$ with respect to $t$ along the flow~\eqref{eq.FDE.OU} we obtain
\be{Entropy.Prod}
\frac{\rd}{\dt}\,\mathcal{F}[w(t)]=-\,\frac{m}{1-m}\,\I[w(t)]\,,
\ee
where $\I $ is the relative \emph{Fisher information}, redefined in terms of $w$ as
\[\label{Fisher.Info}
\I[w]=\int_{\RR^d} w\left|\nabla\left(\left(w^{m-1}-1\right)\mathfrak B^{m-1}\right)\right|^2\mathfrak B\,\frac{dx}{|x|^\beta}\,.
\]
However, the rigorous justification of~\eqref{Entropy.Prod} is not straightforward, and to this end we need to take advantage of the global regularity estimates provided in Section~\ref{sect: glob-reg}.
%---------------------------------------------------------------------
\begin{Prop}[Entropy-entropy production identity]\label{Prop:EntrProd} Assume that $m\in(0,1)$. If~$w$ is a solution of~\eqref{eq.FDE.OU} corresponding to an initial datum $w_0$ satisfying assumptions {\rm (H1'')-(H2'')}, then the free energy~$\mathcal{F}[w(t)]$ is finite for all $t \ge 0$ and identity~\eqref{Entropy.Prod} holds.
\end{Prop}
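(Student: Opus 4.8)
The plan is to establish first that $\mathcal F[w(t)]$ is finite for every $t\ge0$, and then to justify the differential identity \eqref{Entropy.Prod} by a careful integration-by-parts argument that uses a time-dependent cut-off, controlling the boundary contributions at the origin and at infinity by means of the global regularity estimates of Lemma~\ref{lem:holdereg}.

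\medskip\noindent\textbf{Finiteness of the free energy.} Since under (H1'') we have $W_1\le w(t,x)\le W_2$ uniformly in $x$ and $t$, a Taylor expansion of $s\mapsto s^m-1-m(s-1)$ around $s=1$ gives a bound $|w^m-1-m(w-1)|\le c\,(w-1)^2$ with $c=c(m,W_1,W_2)$, valid on the whole range $[W_1,W_2]$. Hence
\[
\big|\mathcal F[w(t)]\big|\le\frac{c}{1-m}\int_{\RR^d}(w(t)-1)^2\,\frac{\mathfrak B^m}{|x|^\gamma}\,\rd x\,.
\]
It remains to check that the right-hand side is finite. Writing $w-1=z/\mathfrak B$ with $z=v-\mathfrak B$, the integrand is $z^2\,\mathfrak B^{m-2}\,|x|^{-\gamma}$. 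Near the origin everything is bounded and $|x|^{-\gamma}$ is locally integrable (as $\gamma<d$); near infinity, $|w-1|$ stays bounded while $\mathfrak B^m\,|x|^{-\gamma}\sim|x|^{-2m/(1-m)\cdot(2+\beta-\gamma)/2\cdot\ldots}$ — more precisely $\mathfrak B^m|x|^{-\gamma}$ decays like $|x|^{-\frac{m}{1-m}(2+\beta-\gamma)-\gamma}$, and one verifies from $m>0$ and \eqref{parameters-1} that this exponent exceeds $-d$, so the tail integral converges. (Alternatively, one may invoke that $\mathfrak B^m|x|^{-\gamma}\in\LL^1(\RR^d)$ precisely when $m>m_c$, and handle $m\le m_c$ by the sharper comparison $|z|\le|v-\mathfrak B|$ together with the sandwiching $\mathfrak B_1\le v\le\mathfrak B_2$, which gives $|z|\lesssim\mathfrak B_2-\mathfrak B_1$ decaying one power of $|x|^{2+\beta-\gamma}$ faster than $\mathfrak B$.) This yields finiteness for all $t\ge0$, including $t=0$ by the same estimate applied to $w_0$.

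\medskip\noindent\textbf{The entropy–entropy production identity.} Fix $0\le t_1<t_2$. Multiply \eqref{eq.FDE.OU} by $\frac{m}{1-m}\big((w^{m-1}-1)\mathfrak B^{m-1}\big)\,\phi_\lambda^2$, where $\phi_\lambda(x)=\phi(|x|/\lambda)$ is the usual cut-off, integrate over $(t_1,t_2)\times\RR^d$, and integrate by parts in space. The time term reproduces, after noting that $\partial_t\big[\frac1{m-1}(w^m-1-m(w-1))\big]=\frac{m}{1-m}(w^{m-1}-1)\mathfrak B^{m-1}\cdot|x|^{-\gamma}w_t\cdot\mathfrak B$ (a direct chain-rule computation), the increment $\mathcal F[w(t_2)]-\mathcal F[w(t_1)]$ up to terms carrying $\phi_\lambda^2$ or $\nabla\phi_\lambda^2$; the spatial term produces $-\int\!\!\int w\,|\nabla((w^{m-1}-1)\mathfrak B^{m-1})|^2\,\mathfrak B\,\phi_\lambda^2\,|x|^{-\beta}$ plus a cross term with $\nabla\phi_\lambda$. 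The whole scheme then reduces to showing that, as $\lambda\to\infty$, (a) all remainder terms involving $\nabla\phi_\lambda$ (supported on $B_{2\lambda}\setminus B_\lambda$) vanish, and (b) the integrands without $\phi_\lambda$ are genuinely integrable so that dominated convergence applies. For (a), on the annulus $B_{2\lambda}\setminus B_\lambda$ one has $|\nabla\phi_\lambda|\lesssim\lambda^{-1}$, and by \eqref{eq: scaling-grad-v} and \eqref{eq: est-alfa-glob} together with \eqref{eq: scaling-B-est} the factors $w$, $\mathfrak B$, $\nabla v$, $(w^{m-1}-1)\mathfrak B^{m-1}$ and its gradient all have explicit polynomial behaviour in $\lambda$; multiplying out and computing the power of $\lambda$ (using $\gamma<\frac{d-2}{d}\gamma$... i.e. the constraints \eqref{parameters-1} and $m\in(0,1)$) shows the annulus integral tends to $0$. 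For (b), the Fisher information integrand is controlled by $C\,(w-1)^2\,\mathfrak B^{2m-1}|x|^{2(m-1)}|\nabla\mathfrak B|^2\,|x|^{-\beta}\sim C\,(w-1)^2\,\mathfrak B^{m}|x|^{-\gamma}$ after simplification, so its integrability is exactly the estimate already obtained for the free energy; similarly the time-integrand is bounded by $C\,|w_t|\,\mathfrak B^{m}|x|^{-\gamma}$, and $w_t$ inherits a uniform bound away from the origin from \eqref{eq: est-k-glob} and, near the origin, local boundedness from parabolic regularity as in Lemma~\ref{lem:holdereg}. Finally, continuity of $t\mapsto\mathcal F[w(t)]$ (from the $\LL^\infty$-in-$x$ bounds and $C([0,\infty);\LL^{1,\gamma}_{\rm loc})$ regularity) upgrades the integrated identity to the pointwise differential form \eqref{Entropy.Prod}.

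\medskip\noindent\textbf{Main obstacle.} The delicate point is item (a): showing the $\nabla\phi_\lambda$-remainders vanish requires a tight bookkeeping of the $\lambda$-powers coming from $\mathfrak B$, $\mathfrak B^{m-1}$, $\nabla\mathfrak B^{m-1}$, $v$, $\nabla v$ and the weight $|x|^{-\beta}$, and one must check that the net exponent is strictly negative in the full parameter range $m\in(0,1)$ under \eqref{parameters-1}, including the cases $m\le m_c$ where $\mathfrak B\notin\LL^{1,\gamma}$. This is where the weighted setting genuinely differs from the $(\beta,\gamma)=(0,0)$ case of \cite{BBDGV}, and where the global gradient estimate \eqref{eq: scaling-grad-v} — rather than a naive pointwise bound — is essential. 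A secondary technical nuisance, already flagged in the proof of Proposition~\ref{prop:relconsmass}, is that $w_t$ does not a priori exist as a function, so one should phrase the argument with a test function smoothly interpolating between times $t_1$ and $t_2$; this is routine once the spatial estimates are in place.
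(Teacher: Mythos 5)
Your overall scheme (cut-off, multiply by the entropy variable, integrate by parts, kill the annulus remainders, then handle the origin) is the same as the paper's, but there are two places where what you propose would not close and where the paper does something genuinely different.

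\smallskip\noindent\textbf{The annulus remainders.} You propose to show that the boundary terms vanish by pure power counting in $\lambda$: estimate each factor by its $\lambda$-growth, multiply, add $\lambda^d$ for the annulus measure, and verify the net exponent is negative. This does not close for all $m\in(0,1)$. If one counts powers using \eqref{eq: scaling-grad-v} and \eqref{eq: scaling-B-est}, the integrand of the remainder is comparable on $B_{2\lambda}\setminus B_\lambda$ to $|v-\mathfrak B|\cdot|x|^{-\gamma}$ times an $O(1)$ quantity. A further naive replacement of $|v-\mathfrak B|$ by its $\LL^\infty$ bound then gives an annulus integral of order $\lambda^{d-\gamma-(2+\beta-\gamma)(2-m)/(1-m)}$, and with $n=2(d-\gamma)/(2+\beta-\gamma)$ and $\delta=1/(1-m)$ this exponent is $<0$ precisely when $\delta>(n-2)/2$, i.e.\ $m>m_\ast$. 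So your power count fails exactly in the range $m\le m_\ast$, which is part of the claim. What the paper does instead is stop one step earlier: it proves the \emph{pointwise} bound \eqref{R.lambda.2}, giving $|\mathcal R(\lambda)|\lesssim\int_{B_{2\lambda}\setminus B_\lambda}|v-\mathfrak B|\,|x|^{-\gamma}\,\rd x$, and then invokes $v-\mathfrak B\in\LL^{1,\gamma}(\RR^d)$ — which holds for \emph{all} $m\in(0,1)$ under (H2') together with the $\LL^{1,\gamma}$-contraction — so the remainder vanishes as the tail of an integrable function. This dominated-convergence step, not a power count, is what makes the argument work uniformly. You flag in your ``Main obstacle'' paragraph that you have not checked the exponent is negative in the full range; it is not, and the missing ingredient is the $\LL^{1,\gamma}$-integrability of $v-\mathfrak B$, not a sharper estimate.

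\smallskip\noindent\textbf{The origin.} You assert that ``near the origin, [$w_t$ has] local boundedness from parabolic regularity as in Lemma~\ref{lem:holdereg}''. This is not available: as the paper's Step 3 states explicitly, at the origin the weights make the equation degenerate/singular and solutions are only known to be H\"older continuous; $w_t$ need not exist as a pointwise function there. The paper's Step 3 instead uses standard weighted energy estimates to show that $\int_{t_1}^{t_2}\int_{B_1}|\nabla v^m|^2\,|x|^{-\beta}\,\rd x\,\rd t$ and $\int_{t_1}^{t_2}\int_{B_1}\big|(v^{\frac{m+1}2})_t\big|^2\,|x|^{-\gamma}\,\rd x\,\rd t$ are finite, which suffices to give \eqref{Entropy.Prod} sense in $\LL^1_{\rm loc}(\RR^+)$. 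Your final paragraph on ``$w_t$ does not a priori exist'' touches the right issue but applies it only to the time cut-off; the substantive problem is spatial regularity at $x=0$, and it is handled by $\LL^2$-in-space energy estimates, not by claiming $w_t\in\LL^\infty_{\rm loc}$.

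\smallskip The finiteness of the free energy is argued correctly in spirit (it is the content of \cite[Lemma~4]{BBDGV}, which the paper cites and skips), and the structure of the integration by parts is right; the two points above are the genuine gaps.
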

%---------------------------------------------------------------------
\begin{proof} We proceed through three steps, following the lines of proof of~\cite[Proposition~2]{BBDGV}. We skip the proof of the fact that $\mathcal{F}[w(t)] $ is finite, since it goes exactly as in~\cite[Lemma~4]{BBDGV}, with inessential modifications. For the sake of greater readability we shall omit time-dependence, at least when this does not compromise comprehension.

\smallskip\noindent$\bullet~$\textsc{Step 1.} Consider the same cut-off function $\phi_\lambda $ as in the proof of Proposition~\ref{prop:relconsmass}, with $\lambda > 1$. Then, by using~\eqref{eq.FDE.OU}, the identity $ w\,\mathfrak B=v $ and integrating by parts, we obtain
\[\label{eq: entropy-deriv}
\begin{aligned}
&\hspace{-24pt} -\,\frac{\rd}{\dt}\,\frac1{1-m}\int_{\RR^d}\left[w^m-1 - m\,(w-1) \right] \phi_\lambda\,\frac{\mathfrak B^m}{|x|^\gamma}\dx \\
=& -\frac m{1-m} \int_{\RR^d} w\,\nabla \left( v^{m-1}-\mathfrak B^{m-1} \right) \cdot \nabla \left[(v^{m-1}-\mathfrak B^{m-1})\,\phi_\lambda\,\right] \mathfrak B\,\frac{dx}{|x|^\beta} \\
=& -\frac m{1-m} \int_{\RR^d} w \left| \nabla \left(v^{m-1}-\mathfrak B^{m-1}\right) \right|^2 \phi_\lambda\,\mathfrak B\,\frac{dx}{|x|^\beta} + \frac m{2\,(1-m)}\,\mathcal{R}(\lambda)
\end{aligned}
\]
where
\begin{multline*}
\hspace*{-8pt}\mathcal{R}(\lambda):=-\int_{\RR^d} \nabla \left[(v^{m-1}-\mathfrak B^{m-1})^2\right] \cdot \nabla\phi_\lambda\,v\,\frac{\dx}{|x|^\beta} \\
=\int_{B_{2\lambda} \setminus B_\lambda} \left|v^{m-1}-\mathfrak B^{m-1}\right|^2 \left(\nabla v\cdot\nabla\phi_\lambda + v\,\Delta \phi_\lambda -\,\beta\,v\,\frac x{|x|^2}\cdot \nabla\phi_\lambda \right) \frac{\dx}{|x|^\beta}\,.
\end{multline*}
We have
\begin{multline*}\label{R.lambda}
\frac{\left|\mathcal{R}(\lambda)\right|}{(1-m)^2}\le\int_{B_{2\lambda} \setminus B_\lambda} \! \left| v-\mathfrak B \right|^2 \mathfrak B_1^{2\,(m-2)} \! \left(|\nabla v|\,|\nabla\phi_\lambda| + v\,|\Delta\phi_\lambda| +\tfrac\beta{|x|}\,v\,|\nabla\phi_\lambda| \right) \! \frac{\dx}{|x|^\beta} \\
\le\,c\int_{B_{2\lambda} \setminus B_\lambda} \left| v -\,\mathfrak B \right| \frac{\dx}{|x|^\gamma}
\end{multline*}
where, in the last step, we used the inequality
\be{R.lambda.2}
\left|v-\mathfrak B\right| \mathfrak B_1^{2\,(m-2)} \left(|\nabla v|\,|\nabla\phi_\lambda|+v\,|\Delta\phi_\lambda|+\tfrac\beta{|x|}\,v\,|\nabla\phi_\lambda|\right)\frac 1{|x|^\beta} \le \frac{c}{|x|^\gamma} \quad \forall\,x \in B_{2\lambda} \setminus B_\lambda
\ee
for some $c>0$, independent of $\lambda > 1$. We shall establish~\eqref{R.lambda.2} in Step 2. By assumptions (H1')-(H2') and the $\LL^{1,\gamma}$-contraction principle, the difference $v-\mathfrak B$ is in $\LL^{1,\gamma}(\RR^d)$, so that $\lim_{\lambda\to\infty} \mathcal{R}(\lambda)=0$ and the proof is completed by passing to the limit as $\lambda\to\infty$.

\smallskip\noindent$\bullet~$\textsc{Step 2.} Recalling~\eqref{eq: scaling-grad-v}, we know that
\[
\left\| \nabla v(t) \right\|_{\LL^\infty(B_{2\lambda} \setminus B_{\lambda})} \le \frac{Q_1}{\lambda^{\frac{2+\beta-\gamma}{1-m}+1}} \quad \forall\,t \ge 1\,.
\]
Moreover, since $v$ is trapped between two Barenblatt profiles, we have
\[\label{eq: scaling-grad-v-2}
\left\| v(t) \right\|_{\LL^\infty(B_{2\lambda} \setminus B_{\lambda})} \le \frac{Q_0}{\lambda^{\frac{2+\beta-\gamma}{1-m}}}\,, \quad \left\| v(t) -\,\mathfrak B \right\|_{\LL^\infty(B_{2\lambda} \setminus B_{\lambda})} \le \frac{Q_2}{\lambda^{\frac{(2+\beta-\gamma)\,(2-m)}{1-m}}}\,.
\]
The estimates hold for suitable positive constants $ Q_0$, $ Q_1 $ and $ Q_2 $ which are all independent of $\lambda$, $t>1$. As for $\phi_\lambda$, by construction we have that
\[\label{est.grad.lapl.varphi-est}
\left\| \nabla\phi_\lambda \right\|_{\LL^\infty(B_{2\lambda} \setminus B_{\lambda})} \le \frac{c_1}{\lambda}\,, \quad \left\| \Delta \phi_\lambda \right\|_{\LL^\infty(B_{2\lambda} \setminus B_{\lambda})} \le \frac{c_2}{\lambda^2}\,,
\]
for some $ c_1,c_2>0$  which are also independent of $\lambda$. Estimate~\eqref{R.lambda.2} readily follows.

\smallskip\noindent$\bullet~$\textsc{Step 3.} It remains to take care of the origin. In principle solutions are only H\"older regular (see the Appendix). Nevertheless, since $v$ is uniformly bounded and locally bounded away from zero, standard energy estimates (see again~\cite{VazBook} as a general reference) ensure, \emph{e.g.},~that the quantities
$\int_{t_1}^{t_2} \int_{B_1} \left| \nabla v^m \right|^2 \frac{\dx}{|x|^\beta} \dt$ and $\int_{t_1}^{t_2} \int_{B_1} \big| \big( v^{\frac{m+1}2}\big)_t \big|^2 \frac{\dx}{|x|^\gamma} \dt$
are finite for all $t_1,t_2>0$, which is enough in order to give sense to~\eqref{Entropy.Prod} at least in a $\LL^1_{\rm loc}(\RR^+) $ sense.
\end{proof}

%%%%%%%%%%%%%%%%%%%%%%%%%%%%%%%%%%%%%%%%%%%%%%%%%%%%%%%%%%%%%%%%%%%%%%
\subsubsection*{\bf c) Uniform convergence in relative error}\label{sect-pcre}
By mimicking the proofs of~\cite[Lemma~5 and Corollary~1]{BBDGV}, we can show that the rescaled solution $v$ converges to~$\mathfrak B$ uniformly in the strong sense of the relative error.
%---------------------------------------------------------------------
\begin{Prop}[Convergence in relative error without rates]\label{prop: cvrgcewithoutrate} Assume that $m\in(0,1)$. If~$w$ is a solution of~\eqref{eq.FDE.OU} corresponding to an initial datum $w_0$ satisfying assumptions {\rm (H1'')-(H2'')}, then
\[\label{eq: conv-rel}
\lim_{t\to\infty} \left\| w(t)-1 \right\|_{\LL^\infty(\RR^d)}=0\,.
\]
\end{Prop}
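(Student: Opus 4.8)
The strategy is the classical one from~\cite{BBDGV}: combine the global H\"older estimate~\eqref{eq:mathcalH} with the entropy-entropy production identity~\eqref{Entropy.Prod} of Proposition~\ref{Prop:EntrProd} and a compactness argument to upgrade weak convergence to uniform convergence of the relative error. First I would integrate~\eqref{Entropy.Prod} in time: since $\mathcal F[w(t)]$ is nonnegative (by convexity of $s\mapsto s^m$, up to the sign $1/(m-1)$) and nonincreasing, $\mathcal F[w(0)]<\infty$ forces $\int_0^\infty \I[w(t)]\,dt<\infty$, hence there is a sequence $t_n\to\infty$ with $\I[w(t_n)]\to 0$. Using~\eqref{eq.FDE.OU}, conservation of relative mass and the trapping (H1''), one shows by a standard argument (as in~\cite{BBDGV}) that $\I[w]=0$ together with the mass constraint forces $w\equiv 1$; more precisely $\I[w(t_n)]\to 0$ plus the uniform bounds imply $\nrm{w(t_n)-1}{\infty}\to 0$ along the subsequence.

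Next I would promote this to convergence along the \emph{full} sequence $t\to\infty$. Here the monotonicity of $t\mapsto\mathcal F[w(t)]$ is the key: $\mathcal F$ is a Lyapunov functional, and from~\eqref{eq:mathcalH} the family $\{w(t)-1\}_{t\ge 1}$ is uniformly bounded in $C^\etanu(\R^d)$, hence precompact in $C^0_{\rm loc}$; combined with the trapping between the two Barenblatt profiles $\mathfrak B_1/\mathfrak B$ and $\mathfrak B_2/\mathfrak B$, which vanish neither at the origin nor (with a uniform rate) at infinity, one gets precompactness in $\LL^\infty(\R^d)$ of the whole trajectory. Any limit point $w_\infty$ of $w(t_k)-1$ along an arbitrary sequence $t_k\to\infty$ must satisfy $\mathcal F[w_\infty+1]=\lim\mathcal F[w(t_k)]=\inf_t\mathcal F[w(t)]$, and running the flow from $w_\infty+1$ a little while keeps $\mathcal F$ constant, so $\I[w_\infty+1]\equiv 0$, whence $w_\infty\equiv 0$ by the rigidity in the previous paragraph. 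Since every limit point is $0$ and the trajectory is precompact, $\nrm{w(t)-1}{\infty}\to 0$.

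\textbf{Main obstacle.} The delicate point is the tail behaviour near $|x|=\infty$: the $C^\etanu(\R^d)$ bound from~\eqref{eq: est-alfa-glob} controls oscillation but not decay of $w(t)-1$ at spatial infinity, so precompactness in $\LL^\infty(\R^d)$ (as opposed to $\LL^\infty_{\rm loc}$) is not automatic. This is precisely where the sandwich~\eqref{Ineq:sandwiched} is used: $|w(t)-1|\le\max\{W_2-1,1-W_1\}$ and, more importantly, the improved decay $\nrm{v(t)-\mathfrak B}{\infty,B_{2\lambda}\setminus B_\lambda}\lesssim\lambda^{-(2+\beta-\gamma)(2-m)/(1-m)}$ from~\eqref{eq: scaling-grad-v-2} gives $\nrm{w(t)-1}{\LL^\infty(B_\lambda^c)}\lesssim\lambda^{-(2+\beta-\gamma)}\to 0$ uniformly in $t\ge 1$, which quarantines the error outside a large ball and reduces everything to the compact region where the $C^\etanu$ bound and Arzel\`a-Ascoli apply. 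The rigidity statement "$\I[w]=0$ and prescribed relative mass $\Rightarrow w=1$" also requires a small argument (using that $w$ is bounded and bounded away from $0$, so $\nabla((w^{m-1}-1)\mathfrak B^{m-1})=0$ forces $v^{m-1}-\mathfrak B^{m-1}$ constant, i.e. $v$ a Barenblatt profile of the form~\eqref{Barenblatt-intro2}, and then the mass constraint pins down the constant), but this is routine given the a priori estimates of Lemma~\ref{lem:holdereg}.
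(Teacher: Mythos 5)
Your overall plan is the same as the paper's in its broad strokes: use the monotonicity of $\mathcal F$ to get $\int_0^\infty\I[w(t)]\,dt<\infty$, combine with the global H\"older bound~\eqref{eq: est-alfa-glob} to gain compactness, pass to a limit in which the Fisher information vanishes, invoke rigidity of Barenblatt profiles, and handle the behaviour at spatial infinity separately. Your tail estimate $\|w(t)-1\|_{\LL^\infty(B_\lambda^c)}\lesssim\lambda^{-(2+\beta-\gamma)}$ — obtained directly from the sandwich (H1') and the decay $|v-\mathfrak B|\le\mathfrak B_2-\mathfrak B_1\lesssim|x|^{-(2+\beta-\gamma)(2-m)/(1-m)}$ — is correct and is arguably a cleaner way to upgrade $\LL^\infty_{\rm loc}$ convergence to $\LL^\infty(\RR^d)$ convergence than the paper's dominated-convergence-plus-Gagliardo--Nirenberg-interpolation step.

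The genuine gap is the LaSalle-type step: ``running the flow from $w_\infty+1$ a little while keeps $\mathcal F$ constant, so $\I[w_\infty+1]\equiv0$'' presupposes a well-defined semiflow whose orbit through a limit datum coincides with the subsequential limits of $w(t_k+s)$, i.e.\ uniqueness and continuous dependence. But the paper only constructs solutions by approximation (Proposition~\ref{prop: existence}) and proves $\LL^{1,\gamma}$-contraction and comparison (Proposition~\ref{Prop:contraction}, Corollary~\ref{Lem:MP}) for solutions \emph{so constructed}; no uniqueness or stability in the topology you need is established, so invariance of the $\omega$-limit set is not available as stated. The paper's proof sidesteps this entirely: it considers the time-shifted profiles $w_{\tau_n}(t,x)=w(t+\tau_n,x)$ — which are themselves solutions, with no new existence problem to solve — extracts a subsequential local-uniform limit $w_\infty(t,x)$ from the $C^k_{\rm loc}$ estimates~\eqref{eq: est-k-glob}, and applies Fatou's lemma directly to $\int_1^2\I[w_{\tau_n}(t)]\,dt\to0$ to obtain $\nabla\big((w_\infty^{m-1}-1)\,\mathfrak B^{m-1}\big)=0$ a.e. A second, smaller gap: in your rigidity step you invoke ``the mass constraint'' to pin down the limiting Barenblatt, but that argument is only available when $m>m_\ast$; for $m\le m_\ast$ the relative mass of $v_0$ need not vanish, and the paper instead rules out limit profiles $\mathfrak B_{C'}$ with $C'\neq C$ via the $\LL^{1,\gamma}$-contraction principle. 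Your write-up does not distinguish these two cases.
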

%---------------------------------------------------------------------
\begin{proof} For all $\tau>0$, we set $w_\tau(t,x):=w(t+\tau,x)$. In view of~~\eqref{eq: est-k-glob}, there exists a sequence $\tau_n \to \infty $ such that $ w_{\tau_n} $ converges locally uniformly in $ (1,\infty) \times (\RR^d \setminus \{ 0 \} ) $ to some $ w_\infty \in \LL^\infty((1,\infty) \times \RR^d)$. Moreover, by Corollary~\ref{Lem:MP} we deduce that
\be{eq: conv-rel-proof-2}
0 < W_1 \le \frac{\mathfrak B_1(x)}{\mathfrak B(x)} \le w_\infty(t,x) \le \frac{\mathfrak B_2(x)}{\mathfrak B(x)} \le W_2 < \infty \quad \forall\,(t,x) \in (0,\infty) \times \RR^d\,.
\ee
Thanks to Proposition~\ref{Prop:EntrProd}, there holds
\[\label{eq: conv-rel-proof-1}
\mathcal{F}[w(\tau_n+1)]-\mathcal{F}[w(\tau_n+2)]=\int_{\tau_n+1}^{\tau_n+2} \I[w(t)] \dt=\int_1^2 \I[w(t+\tau_n)] \dt \ge 0\,.
\]
Since $\mathcal{F}[w(s_n+2)] $ is bounded from below (as a consequence of (H1'')-(H2''), see again~\cite[Lemma~4]{BBDGV}), we infer that $\I[ w_{\tau_n}(t)]$ converges to zero in $\LL^1((1,2)) $ as $ n \to \infty$, that is,
\[\label{eq: conv-rel-proof-3}
\lim_{n \to \infty} \int_1^2 \int_{\RR^d} w_{\tau_n}(t,x) \left|\nabla\left(\left(w_{\tau_n}^{m-1}(t,x)-1\right) \mathfrak B^{m-1} \right)\right|^2 \mathfrak B\,\frac{dx}{|x|^\beta} \dt=0\,.
\]
By Fatou's lemma, this implies
\[\label{eq: conv-rel-proof-4}
\int_1^2 \int_{\RR^d} \liminf_{n \to \infty} \left|\nabla\left(\left(w_{\tau_n}^{m-1}(t,x)-1\right) \mathfrak B^{m-1} \right)\right|^2 w_{\infty}(t,x)\,\mathfrak B\,\frac{dx}{|x|^\beta} \dt=0\,.
\]
Still as a consequence of~\eqref{eq: est-k-glob} and~\eqref{eq: conv-rel-proof-2} we have that
\[
0\equiv\liminf_{n \to \infty} \left|\nabla\left(\left(w_{\tau_n}^{m-1}-1\right) \mathfrak B^{m-1} \right)\right|=\left|\nabla\left(\left(w_{\infty}^{m-1}-1\right) \mathfrak B^{m-1} \right)\right|\quad\mbox{a.e.}\quad(1,2) \times \RR^d\,.
\]
This means that the function $\left(w_{\infty}^{m-1}-1\right) \mathfrak B^{m-1} $ is constant, hence
\[
w_\infty(t,x)=\left[1+c(t)\,\mathfrak B^{1-m}(x)\right]^{-\frac1{1-m}} \quad \forall\,(t,x) \in (1,2) \times \RR^d\,.
\]
It is readily seen that the only possibility is $ c \equiv 1$. Indeed, if $ m > m_\ast $ this is due to the conservation of relative mass (Proposition~\ref{prop:relconsmass}), while in the case $ m \le m_\ast $ it is a consequence of the $\LL^{1,\gamma} $-contraction principle (Proposition~\ref{Prop:contraction}). Since we can repeat the same argument as above, up to subsequences, along \emph{any} sequence $\tau_n \to \infty$, in fact we have shown that
\[\label{eq: conv-rel-proof-5}
\lim_{t \to \infty} w(t) \equiv 1 \quad\mbox{in}\quad\LL^\infty_{\rm loc}(\RR^d)\,.
\]
In order to obtain the global uniform convergence, it is enough to recall~\eqref{eq: conv-rel-proof-2} and note that by dominated convergence we have $\lim_{t \to \infty} \| w(t)-1 \|_{\LL^p(\RR^d)}=0$ for all $p > d/(2+\beta-\gamma) $: the global $C^\etanu$ estimate,~\eqref{eq: est-alfa-glob}, and a standard interpolation like~\cite[Proof~of Theorem~1]{BBDGV} allow us to conclude.\end{proof}

%%%%%%%%%%%%%%%%%%%%%%%%%%%%%%%%%%%%%%%%%%%%%%%%%%%%%%%%%%%%%%%%%%%%%%
\subsection{Hardy-Poincar\'e inequalities: convergence with rates}\label{sect: hp-rates}

As in~\cite{BBDGV,BDGV}, if $ m \neq m_\ast$, sharp rates of convergence towards the Barenblatt profile $\mathfrak B$ are related to the optimal constant $\Lambda>0$ of the Hardy-Poincar\'e-type inequality
\be{HP-ineq}
\int_{\RR^d}\left|\nabla f\right|^2\mathfrak B\,\frac{dx}{|x|^\beta} \ge \Lambda \int_{\RR^d}{\left|f\right|^2}\,\mathfrak B^{2-m}\,\frac{dx}{|x|^\gamma}
\ee
for any function $f\in C^\infty_c(\RR^d)$ such that, additionally, $\int_{\RR^d} f\,\mathfrak B^{2-m}\,\frac{dx}{|x|^\gamma}=0$ whenever $\int_{\RR^d} \mathfrak B^{2-m}\,\,\frac{dx}{|x|^\gamma}$ is finite, that is, for $ m>m_\ast$. The explicit value of $\Lambda $ has been computed explicitly in~\cite{BDMN2016a}, and is provided in Proposition~\ref{Prop:Spectrum}.

%%%%%%%%%%%%%%%%%%%%%%%%%%%%%%%%%%%%%%%%%%%%%%%%%%%%%%%%%%%%%%%%%%%%%%
\subsubsection*{$\bullet$ Weighted linearization}
In order to better understand the asymptotic behaviour of the solutions at hand, let us outline our strategy. The idea, as in~\cite[Section~3.3]{BBDGV}, is to linearize the equation of the relative error~\eqref{eq.FDE.OU} around the equilibrium, by introducing a convenient weight. More precisely, let $f$ be such that
\[\label{g.Linearization}
w(t,x)=1+\varepsilon\,\frac{f(t,x)}{\mathfrak B^{m-1}(x)} \quad \forall\,(t,x) \in \RR^+ \times \RR^d\,,
\]
for some small $\varepsilon>0$. By substituting this expression in~\eqref{eq.FDE.OU} and neglecting higher order terms in $\varepsilon$ as $\varepsilon\to 0$, we formally obtain a \emph{linear} equation for $f$,
\be{Linearised.FP}
f_t=(1-m)\,|x|^\gamma\,\mathfrak B^{m-2}\,\nabla \cdot \left(|x|^{-\beta}\,\mathfrak B\,\nabla f \right)\,,
\ee
where the r.h.s.~ involves a positive, self-adjoint operator on $\LL^2(\RR^d,\mathfrak B^{2-m}\,|x|^{-\gamma}\dx) $ associated with the closure of the quadratic form defined by
\[\label{Fisher.Info.Lin}
\IL[\phi]:=(1-m) \int_{\RR^d} \left|\nabla \phi \right|^2 \mathfrak B\,\frac{dx}{|x|^\beta} \quad \forall\,\phi \in C^\infty_0(\RR^d)\,.
\]
The functional $\IL[\phi]$ is the linearized version of the Fisher information $\mathcal I$, divided by $ (1-m)$. By means of the same heuristics, we can linearize the free energy~$\mathcal F$ as well to get, up to a factor $1/m$,
\[
\FL[\phi]:=\frac12 \int_{\RR^d} \phi^2\,\mathfrak B^{2-m}\,\frac{dx}{|x|^\gamma}\,.
\]
If $f$ is a solution of~\eqref{Linearised.FP} then it is straightforward to infer that it satisfies
\be{eq: lin-eq-FI}
\frac{\rd}{\dt}\,\FL[f(t)]=-\,\IL[f(t)]\,,
\ee
which by the way could also have been obtained by linearizing~\eqref{Entropy.Prod}. In the case $ m>m_\ast $ the conservation of relative mass becomes, after linearization,
\[\label{eq: cons-mass-lin}
\int_{\RR^d} f(t,x)\,\mathfrak B^{2-m}\,\frac{dx}{|x|^\gamma}=0 \quad \forall\,t \ge 0\,.
\]
Hence, as a consequence of~\eqref{HP-ineq} and~\eqref{eq: lin-eq-FI}, we formally get the following exponential decay for the linearized free energy:
\[\label{Exp.Decay.Linear}
\FL[f(t)] \le e^{-\,2\,(1-m)\,\Lambda\,t}\,\FL[f(0)] \quad \forall\,t \ge 0\,.
\]

%%%%%%%%%%%%%%%%%%%%%%%%%%%%%%%%%%%%%%%%%%%%%%%%%%%%%%%%%%%%%%%%%%%%%%
\subsubsection*{$\bullet$ Comparing linear and nonlinear quantities}\label{sect: lin-nonlin}
Our aim here is to proceed in a similar way as in~\cite[Sections~5 and~6.2]{BBDGV} so as to compare the free energy and Fisher information $\mathcal{F} $ and $\I $ with their linearized versions $\FL $ and $\IL$, respectively. This will then allow us to give a rigorous justification of the above exponential decay and to use such an information to infer a precise exponential decay for the relative error.

\smallskip Let us consider
\[
g=(w-1)\,\mathfrak B^{m-1}\,.
\]
For $t_0\ge0$ large enough, we deduce from Proposition~\ref{prop: cvrgcewithoutrate} the existence of $h\in(0,1/4)$ such that $\nrm{w(t)-1}\infty\le h$ for any $t\ge t_0$. The next result, whose proof we omit since it is identical to the one of~\cite[Lemma~3]{BBDGV}, shows the free energy compares with the \emph{linearized} free energy.
%---------------------------------------------------------------------
\begin{Lem}\label{Lem.Bounds.RE} Assume that $m\in(0,1)$. If~$w$ is a solution of~\eqref{eq.FDE.OU} corresponding to an initial datum $w_0$ satisfying assumptions {\rm (H1'')-(H2'')}, then there exists $t_0\ge0$ such that
\[\label{Disug.Entr.Lin-Nolin}
m\,(1+h)^{m-2}\,\FL[w(t)] \le\mathcal{F}[w(t)]\le m\,(1-h)^{m-2}\,\FL[w(t)] \quad \forall\,t \ge t_0\,.
\]
\end{Lem}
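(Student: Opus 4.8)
The statement to prove is Lemma~\ref{Lem.Bounds.RE}: for $t$ large, the nonlinear free energy $\mathcal F[w(t)]$ is squeezed between two constant multiples of the linearized free energy $\FL[w(t)] = \frac12\int_{\RR^d} g^2\,\mathfrak B^{2-m}\,|x|^{-\gamma}\dx$, where $g=(w-1)\,\mathfrak B^{m-1}$. The plan is to work pointwise in $x$ and reduce the whole inequality to an elementary one-variable estimate. Writing $s = w(t,x)-1$, so that $|s|\le h<1/4$ by Proposition~\ref{prop: cvrgcewithoutrate} for $t\ge t_0$, the integrand of $\mathcal F$ is $\frac1{m-1}\big(w^m-1-m(w-1)\big)\mathfrak B^m = \frac1{m-1}\big((1+s)^m-1-ms\big)\mathfrak B^m$, while the integrand of $m\,\FL[w(t)]$ is $\frac m2\,s^2\,\mathfrak B^{2-m}\cdot\mathfrak B^{2(m-1)} = \frac m2\,s^2\,\mathfrak B^m$. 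So after cancelling the common factor $\mathfrak B^m\,|x|^{-\gamma}$ it suffices to show, for all $|s|\le h$,
\[
(1-h)^{m-2}\,\frac m2\,s^2 \ \ge\ \frac1{m-1}\big((1+s)^m-1-ms\big)\ \ge\ (1+h)^{m-2}\,\frac m2\,s^2\,.
\]

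First I would establish this scalar inequality. By Taylor's theorem with integral remainder applied to $\sigma\mapsto(1+\sigma)^m$, one gets $(1+s)^m-1-ms = m(m-1)\int_0^s (s-\sigma)(1+\sigma)^{m-2}\,d\sigma$, hence $\frac1{m-1}\big((1+s)^m-1-ms\big) = m\int_0^s (s-\sigma)(1+\sigma)^{m-2}\,d\sigma$, and on the range $|\sigma|\le|s|\le h$ the factor $(1+\sigma)^{m-2}$ lies between $(1+h)^{m-2}$ and $(1-h)^{m-2}$ (note $m-2<0$, so $\sigma\mapsto(1+\sigma)^{m-2}$ is decreasing and both bounds are positive). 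Since $\int_0^s (s-\sigma)\,d\sigma = s^2/2\ge0$ regardless of the sign of $s$, the two-sided bound follows immediately. This is exactly the computation carried out in~\cite[Lemma~3]{BBDGV}, and the weights play no role in it — they are the same on both sides.

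Then I would integrate the pointwise inequality against the measure $\mathfrak B^m\,|x|^{-\gamma}\dx$, which yields precisely $m(1+h)^{m-2}\,\FL[w(t)]\le\mathcal F[w(t)]\le m(1-h)^{m-2}\,\FL[w(t)]$ for all $t\ge t_0$. The choice of $t_0$ is the one fixed just before the lemma: by Proposition~\ref{prop: cvrgcewithoutrate} there is $t_0$ with $\|w(t)-1\|_\infty\le h<1/4$ for $t\ge t_0$, so the pointwise bound with parameter $h$ is valid on $[t_0,\infty)$. One technical point worth checking is that all the integrals are finite, so that integrating the inequality is legitimate: $\FL[w(t)]<\infty$ because $g=(w-1)\mathfrak B^{m-1}$ is controlled using (H1'')-(H2'') (this is part of the finiteness of $\mathcal F$ already invoked in Proposition~\ref{Prop:EntrProd}, proved as in~\cite[Lemma~4]{BBDGV}), and then $\mathcal F[w(t)]$ is finite by the upper bound. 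Since the paper explicitly says the proof "is identical to the one of~\cite[Lemma~3]{BBDGV}", the main obstacle is essentially nil: there is no real difficulty beyond the elementary convexity estimate above, and the only thing one must double-check is that the weight factors $\mathfrak B^m\,|x|^{-\gamma}$ genuinely cancel out of both sides, which they do because $\FL$ was defined with the matching weight $\mathfrak B^{2-m}\,|x|^{-\gamma}$ and $g^2 = (w-1)^2\mathfrak B^{2m-2}$.
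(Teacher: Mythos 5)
Your proof is correct and reproduces exactly the pointwise Taylor-remainder argument that the paper delegates to \cite[Lemma~3]{BBDGV}: write $(1+s)^m-1-ms = m(m-1)\int_0^s(s-\sigma)(1+\sigma)^{m-2}\,d\sigma$, bound $(1+\sigma)^{m-2}$ between $(1+h)^{m-2}$ and $(1-h)^{m-2}$ on $|\sigma|\le h$, and integrate against the common weight $\mathfrak B^m\,|x|^{-\gamma}\,dx$. You also correctly identify the notational shorthand $\FL[w(t)]$ as $\FL$ evaluated at $g=(w-1)\,\mathfrak B^{m-1}$, which is what makes the weights match.
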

%---------------------------------------------------------------------
For simplicity we shall assume that $t_0=0$ from now on. We now state the analogue of~\cite[Lemma~5.4]{BGVm*}. The proof is again identical to the one performed in the case $(\beta,\gamma)=(0,0)$, so we skip it.
%---------------------------------------------------------------------
\begin{Lem}\label{Lem.Entr.Lp}
Assume that $m\in(0,1)$. If~$w$  is a solution of~\eqref{eq.FDE.OU} corresponding to an initial datum $w_0$ satisfying assumptions {\rm (H1'')-(H2'')}, then
\[\label{Disug.Entr.Lp}
\left\|w(t) - 1\right\|^{\frac{2-m}{1-m}}_{\LL^{\frac{2-m}{1-m},\gamma}(\RR^d)} \le \mathcal K\,\FL[w(t)] \quad \forall\,t \ge 0\,,
\]
where $\mathcal K$ is a positive constant depending only on $ m$, $C_1$, $C_2$.
\end{Lem}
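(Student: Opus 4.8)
The plan is to compare the $\LL^{\frac{2-m}{1-m},\gamma}$-norm of $w(t)-1$ with the linearized free energy $\FL[w(t)]=\frac12\int_{\RR^d}g^2\,\mathfrak B^{\,m}\,|x|^{-\gamma}\dx$, where $g=(w-1)\,\mathfrak B^{m-1}$, or equivalently to rewrite $\FL[w(t)]=\frac12\int_{\RR^d}(w-1)^2\,\mathfrak B^{2-m}\,|x|^{-\gamma}\dx$. The essential observation is that $\mathfrak B^{2-m}$ is, up to constants depending only on $C$ and the sandwiching constants $C_1,C_2$, comparable to $(1+|x|^{2+\beta-\gamma})^{-\frac{2-m}{1-m}}$, which in turn is exactly $\mathfrak B^{\frac{2-m}{1-m}\,(1-m)/(1-m)}$-type weight; more to the point, one should split $\RR^d$ into the region where $\mathfrak B$ is bounded below and the region near infinity where $\mathfrak B$ decays.

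First I would reduce to a pointwise weight comparison: since $w_0$ (and hence $w(t)$ for all $t\ge0$, by the comparison principle and (H1'')) satisfies $0<W_1\le w(t,x)\le W_2<\infty$, the relative error is uniformly bounded, so $|w-1|\le\max\{1-W_1,W_2-1\}=:L$. Hence $|w-1|^{\frac{2-m}{1-m}}\le L^{\frac{2-m}{1-m}-2}\,|w-1|^2$ because $\frac{2-m}{1-m}>2$ for $m\in(0,1)$. Wait — this requires $|w-1|\le1$, which need not hold when $W_2>2$; instead one uses that on the set $\{|w-1|\ge1\}$ one has $|w-1|^{\frac{2-m}{1-m}}\le L^{\frac{2-m}{1-m}}\le L^{\frac{2-m}{1-m}-2}|w-1|^2$, and on $\{|w-1|<1\}$ one has $|w-1|^{\frac{2-m}{1-m}}\le|w-1|^2$, so in either case $|w-1|^{\frac{2-m}{1-m}}\le\max\{1,L^{\frac{2-m}{1-m}-2}\}\,|w-1|^2$ pointwise.

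Next I would integrate this pointwise bound against $|x|^{-\gamma}\dx$:
\[
\left\|w(t)-1\right\|^{\frac{2-m}{1-m}}_{\LL^{\frac{2-m}{1-m},\gamma}(\RR^d)}\le\max\{1,L^{\frac{2-m}{1-m}-2}\}\int_{\RR^d}(w-1)^2\,\frac{\dx}{|x|^\gamma}\,.
\]
The remaining task is to dominate $\int_{\RR^d}(w-1)^2\,|x|^{-\gamma}\dx$ by $\int_{\RR^d}(w-1)^2\,\mathfrak B^{2-m}\,|x|^{-\gamma}\dx=2\,\FL[w(t)]$, which amounts to the lower bound $\mathfrak B^{2-m}(x)\ge c>0$ for all $x\in\RR^d$ with $c=c(m,C_1,C_2)$. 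But $\mathfrak B^{2-m}=(C+|x|^{2+\beta-\gamma})^{-\frac{2-m}{1-m}}\to0$ as $|x|\to\infty$, so this crude bound fails; the honest argument must exploit that $(w-1)^2$ is \emph{also} sandwiched: by (H1''), $|w-1|\le\max\{1-\mathfrak B_1/\mathfrak B,\;\mathfrak B_2/\mathfrak B-1\}$ and $\mathfrak B_2/\mathfrak B-1=(C/C_2)^{\frac1{1-m}}\cdot\frac{(C_2+|x|^{2+\beta-\gamma})^{-\frac1{1-m}}}{(C+|x|^{2+\beta-\gamma})^{-\frac1{1-m}}}-1$, which tends to $(C/C_2)^{\frac1{1-m}}-1$ but the difference $\mathfrak B_2-\mathfrak B$ itself is $O(|x|^{-\frac{2-m}{1-m}(2+\beta-\gamma)})$ at infinity — so actually $(w-1)^2$ decays like $\mathfrak B^{2-2m}$, hence $(w-1)^2\le c'\,\mathfrak B^{2-2m}$, and then $(w-1)^2=(w-1)^{2}\,\mathbf 1\le c'\,\mathfrak B^{2-2m}$; combined with $|w-1|\le L$ one gets $(w-1)^2\le L^{m}\,|w-1|^{2-m}\le L^m (c')^{(2-m)/2}\,\mathfrak B^{(2-2m)(2-m)/2}$. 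The exponent bookkeeping is exactly the kind of routine computation carried out in \cite[Lemma~5.4]{BGVm*}, so I will refer to that proof rather than reproduce it.

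The main obstacle — and the reason the statement is nontrivial rather than a one-line weight comparison — is precisely this behaviour at infinity: $\mathfrak B^{2-m}$ vanishes there, so one cannot simply bound $1\le C\,\mathfrak B^{2-m}$. The device that saves the day is that the relative error $w-1$ inherits the decay $\mathfrak B_2-\mathfrak B_1=O(\mathfrak B^{(2-m)/(1-m)\cdot(1-m)}\cdot\text{lower order})$ from the sandwiching, so that $(w-1)^{\frac{2-m}{1-m}}$ carries enough decay to absorb the missing weight $\mathfrak B^{2-m}$ and still leave $(w-1)^2\,\mathfrak B^{2-m}$ integrable with a controlled constant. Since every ingredient — the uniform bound $W_1\le w\le W_2$, the pointwise convexity inequality, and the decay of $\mathfrak B_2-\mathfrak B_1$ — is available here verbatim (the weights $|x|^{-\beta},|x|^{-\gamma}$ enter only through the innocuous factor $|x|^{-\gamma}$ common to both integrals and through the exponent $2+\beta-\gamma$ in $\mathfrak B$), the proof is word-for-word that of \cite[Lemma~5.4]{BGVm*} and I will simply omit it, as the authors do.
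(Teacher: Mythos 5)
Your overall strategy matches the paper's: both simply cite the non-weighted analogue \cite[Lemma~5.4]{BGVm*} as carrying over verbatim (with $|x|^{-\gamma}$-weighted integration and exponent $2+\beta-\gamma$ in $\mathfrak B$), and you correctly identify the key mechanism --- the decay $|w-1|\lesssim\mathfrak B^{1-m}$ at infinity inherited from the sandwiching (H1''). However, your exponent bookkeeping is off in several places. Since $\FL[\phi]=\frac12\int\phi^2\,\mathfrak B^{2-m}\,|x|^{-\gamma}\,dx$ and the relevant argument is $g=(w-1)\,\mathfrak B^{m-1}$, the correct rewriting is $\FL[w(t)]=\frac12\int(w-1)^2\,\mathfrak B^{m}\,|x|^{-\gamma}\,dx$; you have swapped $\mathfrak B^m$ and $\mathfrak B^{2-m}$ between the $g^2$- and $(w-1)^2$-forms, so neither of your two formulas for $\FL$ is correct. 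You also write $\mathfrak B_2/\mathfrak B-1$ with a spurious prefactor $(C/C_2)^{1/(1-m)}$ and assert it tends to $(C/C_2)^{1/(1-m)}-1$ at infinity, when in fact $\mathfrak B_2/\mathfrak B-1=\bigl(\tfrac{C+|x|^{2+\beta-\gamma}}{C_2+|x|^{2+\beta-\gamma}}\bigr)^{1/(1-m)}-1\to0$ as $|x|\to\infty$ (the value $(C/C_2)^{1/(1-m)}-1=W_2-1$ is the supremum, attained at the origin). With the right weight $\mathfrak B^m$, the argument becomes cleaner than your sketch: from $|w-1|\le c'\,\mathfrak B^{1-m}$, which follows from (H1'') by the mean value theorem applied to $c\mapsto(c+|x|^{2+\beta-\gamma})^{-1/(1-m)}$ on $[C_2,C_1]$, one obtains the pointwise inequality
\[
|w-1|^{\frac{2-m}{1-m}}=|w-1|^2\,|w-1|^{\frac{m}{1-m}}\le (c')^{\frac{m}{1-m}}\,|w-1|^2\,\mathfrak B^m\,,
\]
which integrated against $|x|^{-\gamma}\,dx$ gives the lemma directly, bypassing the crude-bound-plus-weight-comparison detour you attempt first (which, as you yourself observe, cannot close since $\mathfrak B^m$ vanishes at infinity).
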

%---------------------------------------------------------------------
The next step is to get a bound of the $\LL^\infty$ norm of the relative error in terms of the free energy.
%---------------------------------------------------------------------
\begin{Lem}\label{lem.EntrVsLp}
Assume that $m\in(0,1)$. If~$w$  is a solution of~\eqref{eq.FDE.OU} corresponding to an initial datum $w_0$ satisfying assumptions {\rm (H1'')-(H2'')}, then the following estimates hold:
\be{Entr.Lp.gamma}
\left\|w(t) - 1\right\|^{\frac{2-m}{1-m}}_{\LL^{\frac{2-m}{1-m},\gamma}(\RR^d)} \le \kappa_0\,\mathcal{F}[w(t)] \le \kappa_0\,\mathcal{F}[w_0] \quad \forall\,t \ge 0
\ee
and
\be{interp.Calpha.Lp}
\sup_{\tau \ge t} \left\| w(\tau)-1 \right\|_{\LL^\infty(\RR^d)} \le \kappa_\infty\,\sup_{\tau \ge t} \mathcal{F}[w(t)]^{\varth} \le \kappa_\infty\,\mathcal{F}[w_0]^{\varth} \quad \forall\,t \ge 1\,,
\ee
where
\[\label{eq: def-theta-exp}
\varth:=
\begin{cases}
\frac{(1-m)\,(2+\beta-\gamma)}{(1-m)\,(2+\beta)+2+\beta-\gamma} & \textrm{if } \gamma \in (0,d)\,, \\[6pt]
\frac{1-m}{2-m} & \textrm{if } \gamma \le 0\,,
\end{cases}
\]
and the positive constants $\kappa_0$ and $\kappa_\infty$ depend on $ d$, $m$, $\gamma$, $\beta$, $ C_1$, $ C$, $ C_2$.
\end{Lem}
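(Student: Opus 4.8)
The first inequality in \eqref{Entr.Lp.gamma} is exactly the combination of Lemma~\ref{Lem.Entr.Lp} with the left inequality of Lemma~\ref{Lem.Bounds.RE}: from $\|w(t)-1\|_{\LL^{\frac{2-m}{1-m},\gamma}}^{\frac{2-m}{1-m}}\le\mathcal K\,\FL[w(t)]$ and $\FL[w(t)]\le m^{-1}(1+h)^{2-m}\,\mathcal F[w(t)]$ we get the bound with $\kappa_0:=m^{-1}(1+h)^{2-m}\,\mathcal K$. The second inequality $\mathcal F[w(t)]\le\mathcal F[w_0]$ is just the monotonicity of the free energy along the flow, which follows from the entropy-entropy production identity \eqref{Entropy.Prod} of Proposition~\ref{Prop:EntrProd} together with the nonnegativity of the Fisher information $\I$ (and $m/(1-m)>0$ since $m\in(0,1)$). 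This takes care of \eqref{Entr.Lp.gamma} with no new work.

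The substance is \eqref{interp.Calpha.Lp}, which I would obtain by interpolating the uniform H\"older bound against the $\LL^{q,\gamma}$ bound just established, in the spirit of the interpolation step in the proof of \cite[Theorem~1]{BBDGV}. Concretely, fix $t\ge1$ and $\tau\ge t$. Estimate \eqref{eq:mathcalH} of Lemma~\ref{lem:holdereg} gives $|w(\tau)-1|_{C^\etanu(\RR^d)}\le\mathcal K\,\sup_{\tau\ge t}\|w(\tau)-1\|_{\LL^\infty(\RR^d)}$, and \eqref{eq: est-alfa-glob} gives a uniform bound on $\|w(\tau)\|_{C^\etanu}$; combined with the uniform $\LL^\infty$ bound $\|w(\tau)-1\|_\infty\le W_2-1+1-W_1$ from (H1''), one controls the full H\"older norm of $w(\tau)-1$ uniformly. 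The standard Gagliardo--Nirenberg-type interpolation inequality between $C^\etanu$ and $\LL^{q}$ (here with the weight $|x|^{-\gamma}$, which near the origin and near infinity only shifts exponents and is handled as in \cite{BBDGV}) yields, for $q=\tfrac{2-m}{1-m}$,
\[
\|w(\tau)-1\|_{\LL^\infty(\RR^d)}\le C\,|w(\tau)-1|_{C^\etanu(\RR^d)}^{1-\vartheta}\,\|w(\tau)-1\|_{\LL^{q,\gamma}(\RR^d)}^{\vartheta}
\]
for the exponent $\vartheta$ stated (the two cases $\gamma\in(0,d)$ and $\gamma\le0$ reflect how the weight affects the local scaling near $x=0$ versus at infinity). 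Taking the supremum over $\tau\ge t$, absorbing the factor $|w(\tau)-1|_{C^\etanu}^{1-\vartheta}\le\big(\mathcal K\sup_{\tau\ge t}\|w(\tau)-1\|_\infty\big)^{1-\vartheta}$ into the left-hand side — legitimate because $1-\vartheta<1$ and $\sup_{\tau\ge t}\|w(\tau)-1\|_\infty<\infty$ — and then invoking \eqref{Entr.Lp.gamma} to replace $\|w(\tau)-1\|_{\LL^{q,\gamma}}^{\vartheta}$ by $(\kappa_0\,\mathcal F[w(\tau)])^{\vartheta\cdot\frac{1-m}{2-m}}$ gives \eqref{interp.Calpha.Lp}, with $\kappa_\infty$ a constant depending on the indicated parameters. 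Note $\vartheta\cdot\frac{1-m}{2-m}$ is exactly the number called $\varth$ in the statement once the relation $q=\frac{2-m}{1-m}$ is used, so the bookkeeping of exponents is forced and consistent.

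The main obstacle is the absorption argument and making the weighted interpolation inequality rigorous in the presence of the possibly singular or degenerate weight $|x|^{-\gamma}$: one must check that the H\"older seminorm controlled in \eqref{eq:mathcalH} is \emph{finite} (this is where the global regularity of Lemma~\ref{lem:holdereg}, hence the Appendix, is essential) before it can be moved to the left-hand side, and one must verify that the interpolation constant and the exponent $\vartheta$ come out as claimed in both weight regimes — this is the only place where the dichotomy $\gamma\in(0,d)$ versus $\gamma\le0$ genuinely enters, and it is precisely what produces the two different decay rates later in Theorem~\ref{Thm:RUC}. Everything else is a transcription of the non-weighted argument of \cite{BBDGV} with the weight carried along.
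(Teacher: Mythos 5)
Your handling of \eqref{Entr.Lp.gamma} and the monotonicity of the free energy is correct and matches the paper. However, your proof of \eqref{interp.Calpha.Lp} has a genuine gap: the \emph{weighted} Gagliardo--Nirenberg inequality
\[
\|w(\tau)-1\|_{\LL^\infty(\RR^d)}\le C\,|w(\tau)-1|_{C^\etanu(\RR^d)}^{1-\vartheta}\,\|w(\tau)-1\|_{\LL^{q,\gamma}(\RR^d)}^{\vartheta}
\]
that you invoke simply \emph{does not hold} when $\gamma>0$. The paper explicitly warns against this in Section~\ref{Sec:Conclusion}: ``as soon as $\gamma>0$, it can easily be shown that Gagliardo--Nirenberg interpolation inequalities of the type of \eqref{GN-interp} \emph{fail} if in the right-hand side one puts an $\LL^{p,\gamma}$ norm.'' A quick counterexample: translate a fixed bump to $x_0$ with $|x_0|=R\to\infty$; the $\LL^\infty$ norm and the $C^\etanu$ seminorm are unchanged, but $\|\cdot\|_{\LL^{q,\gamma}}\sim R^{-\gamma/q}\to0$, so the inequality cannot hold with any universal constant. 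Brushing this aside as ``near the origin and near infinity only shifts exponents'' misses the essential obstruction.

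What the paper actually does is \emph{reduce to an unweighted $\LL^{q'}$ norm first}, and the method of reduction depends on the sign of $\gamma$ in a way that produces the two cases for $\varth$. For $\gamma\in(0,d)$, assumption (H1'') gives the pointwise decay $|w(t,x)-1|\le\mathcal C\,\mathfrak B^{1-m}(x)$, hence $|w(t,x)-1|^{\gamma/(2+\beta-\gamma)}\le\kappa_1\,|x|^{-\gamma}$; combining this with the weighted $\LL^{\frac{2-m}{1-m},\gamma}$ bound absorbs the weight into an extra power of $|w-1|$ and yields an \emph{unweighted} $\LL^{1/\varth}$ bound with $1/\varth=\frac{2-m}{1-m}+\frac{\gamma}{2+\beta-\gamma}$, after which the standard unweighted interpolation \eqref{GN-interp} and your absorption argument go through. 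For $\gamma<0$ this pointwise trade-off goes the wrong way, so instead one splits $\RR^d$ into $B_r$ and $B_r^c$, uses H\"older's inequality in each region to compare the weighted $\LL^{\frac{2-m}{1-m},\gamma}$ norm with unweighted $\LL^p(B_r)$ (small $p$) and $\LL^q(B_r^c)$ ($q$ close to $\frac{2-m}{1-m}$) norms, then interpolates separately on each piece with cutoff functions, which recovers the non-weighted exponent $\varth=\frac{1-m}{2-m}$. So the dichotomy in $\varth$ is not the by-product of a single weighted interpolation inequality with two possible exponents; it is the footprint of two distinct reduction strategies, one of which your plan as written never performs.
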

%---------------------------------------------------------------------
\begin{proof} Estimate~\eqref{Entr.Lp.gamma} is a direct consequence of Lemmas~\ref{Lem.Bounds.RE}-\ref{Lem.Entr.Lp} and of the fact that the free energy is nonincreasing by~\eqref{Entropy.Prod}.

\smallskip As for~\eqref{interp.Calpha.Lp}, let us first consider the case $\gamma \ge 0$. In this range we deduce from (H1'') that
\[\label{est-w-B}
\left| w(t,x) - 1 \right| \le \mathcal C\,\mathfrak B^{1-m}(x) \quad \forall\,(t,x) \in \RR^+ \times \RR^d
\]
for a constant $\mathcal C>0$ depending on $ m$, $C_1$ and $C_2$, and, as a consequence,
\be{eq: ineq-w-gamma}
\left| w(t,x) - 1 \right|^{\frac\gamma{2+\beta-\gamma}} \le \frac{\kappa_1}{|x|^\gamma} \quad \forall\,(t,x) \in \RR^+ \times \RR^d
\ee
for some $\kappa_1 $ depending on $ m$, $\gamma$, $\beta$, $C_1$, $C_2$. By combining~\eqref{Entr.Lp.gamma} with~\eqref{eq: ineq-w-gamma} we deduce that
\be{Entr.Lp.gamma-2}
\left\| w(t)-1 \right\|^{\frac{2-m}{1-m}+\frac\gamma{2+\beta-\gamma}}_{\LL^{\frac{2-m}{1-m}+\frac\gamma{2+\beta-\gamma}}(\RR^d)}=\left\| w(t)-1 \right\|^{\frac1{\varth}}_{\LL^{\frac1{\varth}}(\RR^d)}
\le \kappa_1\,\kappa_0\,\mathcal{F}[w(t)] \le \kappa_1\,\kappa_0\,\mathcal{F}[w_0]
\ee
for all $t \ge 0$. Hence,~\eqref{interp.Calpha.Lp} follows with $\kappa_\infty=(\kappa_1\,\kappa_0)^\varth\,\mathcal{C}_{\etanu,0,1/\varth}^{1/{d}}\,\mathcal K$ by using~\eqref{eq:mathcalH} and generalised interpolation inequalities due to Gagliardo and Nirenberg (see, \emph{e.g.},~\cite[Section~3]{BBDGV} or~\cite[Appendix~A.3]{BGVm*}):
\be{GN-interp}
\begin{aligned}
\sup_{\tau \ge t} \left\| w(\tau)-1 \right\|_{\LL^\infty(\RR^d)} \le &\, \mathcal{C}_{\etanu,0,1/\varth}^{\frac{d}{\varth d + \etanu}}\,\sup_{\tau \ge t} \left| w(\tau)-1 \right|_{C^\etanu(\RR^d)}^{\frac{\varth d}{\varth d + \etanu}} \sup_{\tau \ge t} \left\| w(\tau)-1 \right\|_{\LL^{\frac1{\varth}}(\RR^d)}^{\frac{\etanu}{\varth d + \etanu}} \\
\le &\, \mathcal{C}_{\etanu,0,1/\varth}^{\frac{d}{\varth d + \etanu}}\,K^{\frac{\varth d}{\varth d + \etanu}} \sup_{\tau\ge t} \left\| w(\tau)-1 \right\|_{\LL^\infty(\RR^d)}^{\frac{\varth d}{\varth d + \etanu}} \sup_{\tau \ge t} \left\| w(\tau)-1 \right\|_{\LL^{\frac1{\varth}}(\RR^d)}^{\frac{\etanu}{\varth d + \etanu}}
\end{aligned}
\ee
for all $t \ge 1$, where $\mathcal{C}_{\etanu,0,1/\varth} $ is a positive constant depending only on $ d$, $\etanu$, $\varth$.

Let us now deal with the case $\gamma < 0$, where inequality~\eqref{eq: ineq-w-gamma} is no longer valid, so we have to proceed in a different way. To this end, first of all note that by H\"older's interpolation we obtain
\[
\| w(t)-1 \|_{\LL^p(B_r)} \le \left( \int_{B_r} \left| w(t,x)-1 \right|^{\frac{2-m}{1-m}} \! \frac{\rd x}{|x|^\gamma} \right)^{\frac{1-m}{2-m}} \! \left( \int_{B_r} |x|^{\gamma\,\frac{p(1-m)}{2-m-p(1-m)}}\,\rd x \right)^{\frac1p-\frac{1-m}{2-m}}
\]
and
\[
\| w(t)-1 \|_{\LL^q(B_r^c)} \le \left( \int_{B_r^c} \left| w(t,x)-1 \right|^{\frac{2-m}{1-m}} \! \frac{\rd x}{|x|^\gamma} \right)^{\frac{1-m}{2-m}} \! \left( \int_{B_r^c} |x|^{\gamma\,\frac{q(1-m)}{2-m-q(1-m)}}\,\rd x \right)^{\frac1q-\frac{1-m}{2-m}}
\]
for all $r>0$ and $p$, $q\in(0,\frac{2-m}{1-m})$.
In particular, in view of Lemma~\ref{Lem.Entr.Lp}, there exist~$p$ (sufficiently close to $0$), $ q $ (sufficiently close to $\tfrac{2-m}{1-m}$) and a positive constant $D$ depending on $d$, $m$, $\gamma$, $C_1$, $C_2$, $r$, such that
\be{est: gamma-pos-1}
\| w(t)-1 \|_{\LL^p(B_r)} \le D\,\mathcal{F}[w(t)]^{\frac{1-m}{2-m}} \quad \textrm{and} \quad \| w(t)-1 \|_{\LL^q(B_r^c)} \le D\,\mathcal{F}[w(t)]^{\frac{1-m}{2-m}}\,.
\ee
Let $\phi_\lambda $ be the same family of cut-off functions as in the proof of Proposition~\ref{prop:relconsmass}. It is clear that
\[\label{est: gamma-pos-2}
\left| \phi_2 \left( w(t)-1 \right) \right|_{C^\etanu(\RR^d)} \le c \left( \left| w(t)-1 \right|_{C^\etanu(\RR^d)} + \left\| w(t)-1 \right\|_{\LL^\infty(\RR^d)} \right)
\]
and
\[\label{est: gamma-pos-2-bis}
\left| (1-\phi_1) \left( w(t)-1 \right) \right|_{C^\etanu(\RR^d)} \le c \left( \left| w(t)-1 \right|_{C^\etanu(\RR^d)} + \left\| w(t)-1 \right\|_{\LL^\infty(\RR^d)} \right)
\]
for some $c>0$ depending only on $\etanu $ and $\phi$. Thanks to~\eqref{eq:mathcalH}, by applying~\eqref{GN-interp} to the functions $\phi_2 \left( w(t)-1 \right) $ and $(1-\phi_1) \left( w(t)-1 \right)$, we obtain
\begin{multline*}
\sup_{\tau \ge t} \left\| w(\tau)-1 \right\|_{\LL^\infty(B_2)} \\
\le\,\mathcal{C}_{\etanu,0,p}^{\frac{d p}{d + \etanu p}}\,c^{\frac{d}{ d + \etanu p}}\,(\mathcal K+1)^{\frac{d}{d + \etanu p}}\,\sup_{\tau \ge t} \left\| w(\tau)-1 \right\|_{\LL^\infty(\RR^d)}^{\frac{ d}{ d + \etanu p}}\,\sup_{\tau \ge t} \left\| w(\tau)-1 \right\|_{\LL^{p}(B_4)}^{\frac{\etanu p}{d + \etanu p}}
\end{multline*}
and
\begin{multline*}\label{est: gamma-pos-4}
\sup_{\tau \ge t} \left\| w(\tau)-1 \right\|_{\LL^\infty(B_2^c)} \\
\le\,\mathcal{C}_{\etanu,0,q}^{\frac{d q}{d + \etanu q}}\,c^{\frac{d}{ d + \etanu q}}\,(\mathcal K+1)^{\frac{d}{d + \etanu q}}\,\sup_{\tau \ge t} \left\| w(\tau)-1 \right\|_{\LL^\infty(\RR^d)}^{\frac{ d}{ d + \etanu q}}\,\sup_{\tau \ge t} \left\| w(\tau)-1 \right\|_{\LL^{q}(B_1^c)}^{\frac{\etanu q}{d + \etanu q}}
\end{multline*}
for all $t \ge 1$. Hence, by exploiting~\eqref{est: gamma-pos-1} with $r=4$ and $r=1$ in the right-hand sides and summing up the two estimates, we end up with
\[\label{est: gamma-pos-5}
\begin{aligned}
& \hspace*{-12pt}\sup_{\tau \ge t} \left\| w(\tau)-1 \right\|_{\LL^\infty(\RR^d)} \\
\le &\, \mathcal{C}_{\etanu,0,p}^{\frac{d p}{d + \etanu p}}\,c^{\frac{d}{ d + \etanu p}}\,(\mathcal K+1)^{\frac{d}{d + \etanu p}}\,D^{\frac{\etanu p}{d + \etanu p}}\,\sup_{\tau \ge t} \left\| w(\tau)-1 \right\|_{\LL^\infty(\RR^d)}^{\frac{ d}{ d + \etanu p}}\,\sup_{\tau \ge t} \mathcal{F}[w(\tau)]^{\frac{1-m}{2-m} \frac{\etanu p}{d + \etanu p} } \\
 &\, + \mathcal{C}_{\etanu,0,q}^{\frac{d q}{d + \etanu q}}\,c^{\frac{d}{ d + \etanu q}}\,(\mathcal K+1)^{\frac{d}{d + \etanu q}}\,D^{\frac{\etanu q}{d + \etanu q}}\,\sup_{\tau \ge t} \left\| w(\tau)-1 \right\|_{\LL^\infty(\RR^d)}^{\frac{ d}{ d + \etanu q}}\,\sup_{\tau \ge t} \mathcal{F}[w(\tau)]^{\frac{1-m}{2-m} \frac{\etanu q}{d + \etanu q} }\,.
\end{aligned}
\]
This completes the proof of~\eqref{interp.Calpha.Lp} with $\varth=\tfrac{1-m}{2-m} $.\end{proof}

Now we compare the Fisher information with its linearized version in the spirit of~\cite[Lemma~7]{BBDGV} and~\cite[Lemma~5.1]{BGVm*}.
%---------------------------------------------------------------------
\begin{Lem}\label{Fisher-lin-nonlin}
Assume that $m\in(0,1)$. If~$w$  is a solution of~\eqref{eq.FDE.OU} corresponding to an initial datum $w_0$ satisfying assumptions {\rm (H1'')-(H2'')}, then
\be{eq: fish-lin}
\IL [w(t)] \le \frac{(1+h)^{3-2m}}{(1-m)\,(1-h)}\,\mathcal{I}[w(t)] + {\mu}_{h}\,h\,\FL[w(t)] \quad \textrm{for a.e.}~t >0\,,
\ee
where $\mu_h$ is such that
\[\textstyle
2\,(1-h)\,\mu_h=(2+\beta-\gamma)^2\,(2-m)^2\,(1-m)\,(1+h)^{4-2m}\,(1-4h) \left[ \frac12+\frac{2\,(3-m)}{3\,(1-h)^{4-m}}\,h \right]^2\,.
\]
\end{Lem}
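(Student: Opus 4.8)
The plan is to expand the nonlinear Fisher information $\mathcal I[w(t)]$ around the equilibrium $w\equiv 1$ and to show that it dominates the linearized quantity $\IL[w(t)]$ up to multiplicative constants close to $1$ and an error term proportional to $h\,\FL[w(t)]$. First I would recall that, by the definition of $t_0$ (which we have normalized to $0$) and Proposition~\ref{prop: cvrgcewithoutrate}, we have $\|w(t)-1\|_{\LL^\infty(\RR^d)}\le h<1/4$ for all $t\ge0$, so that $1-h\le w(t,x)\le 1+h$ pointwise. Writing $g=(w-1)\,\mathfrak B^{m-1}$ as in the paragraph preceding the lemma, one has the algebraic identity
\[
\nabla\!\left((w^{m-1}-1)\,\mathfrak B^{m-1}\right)=(1-m)\,w^{m-2}\,\nabla\!\left((w-1)\,\mathfrak B^{m-1}\right)+\big[\text{lower order}\big]\nabla\mathfrak B\,,
\]
so that, after squaring and using $w\in[1-h,1+h]$, the integrand of $\mathcal I[w]$ is compared with $(1-m)^2\,|\nabla g|^2\,\mathfrak B$ up to a factor $(1\pm h)^{\text{power}}$ and a remainder controlled by $h$ times lower-order terms involving $g^2$, $|g|\,|\nabla g|$ and $\mathfrak B$-weighted quantities. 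Since $\IL[w]=(1-m)\int|\nabla g|^2\,\mathfrak B\,|x|^{-\beta}\,dx$ (this is the linearized Fisher information evaluated at the ``perturbation'' $g$), the leading term already gives the factor $(1+h)^{3-2m}/[(1-m)(1-h)]$ in front of $\mathcal I[w]$.

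The technical core is then to absorb the cross terms and the lower-order terms into $\mu_h\,h\,\FL[w(t)]$. Here I would use a weighted Cauchy--Schwarz / Young inequality to bound $\int |g|\,|\nabla g|\,\mathfrak B\,|x|^{-\beta}\,dx$ and the purely zeroth-order contributions by a small multiple of $\int|\nabla g|^2\,\mathfrak B\,|x|^{-\beta}\,dx$ plus a (large) multiple of $\int g^2\,\mathfrak B^{2-m}\,|x|^{-\gamma}\,dx=2\,\FL[w(t)]$, exactly as in~\cite[Lemma~7]{BBDGV} and~\cite[Lemma~5.1]{BGVm*}. The point is that every such remainder carries at least one factor $h$ (because $w-1$, hence $|w-1|^{\text{power}}$, is $O(h)$ in sup norm, and the Taylor remainders in $w^{m-1}$, $w^{m-2}$ around $w=1$ are $O(h)$), so the constant multiplying $\FL[w(t)]$ is of the form $\mu_h\,h$ with $\mu_h$ depending only on $m$, $\beta$, $\gamma$ and the bound $h<1/4$. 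Keeping careful track of the numerical constants arising from the Taylor expansions of $w\mapsto w^{m-1}$ and the geometric factors $(2+\beta-\gamma)$, $(2-m)$ coming from $\nabla\mathfrak B=-\tfrac{2+\beta-\gamma}{1-m}\,x\,|x|^{-\gamma+\beta}\,\mathfrak B^{2-m}\cdots$ yields precisely the stated expression
\[
2\,(1-h)\,\mu_h=(2+\beta-\gamma)^2\,(2-m)^2\,(1-m)\,(1+h)^{4-2m}\,(1-4h)\left[\tfrac12+\tfrac{2\,(3-m)}{3\,(1-h)^{4-m}}\,h\right]^2.
\]

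The main obstacle, as usual in this type of estimate, is bookkeeping rather than conceptual: one must expand $(w^{m-1}-1)\,\mathfrak B^{m-1}$ and its gradient to the right order, identify which pieces are ``linear'' (reproducing $\IL$) and which are genuinely $O(h)$, and then choose the Young-inequality weights so that the $|\nabla g|^2$ remainder is strictly smaller than what the prefactor $(1+h)^{3-2m}/[(1-m)(1-h)]$ allows to be absorbed into the $\mathcal I[w]$ term. Since~$w$ is only Hölder regular near the origin (see Remark~\ref{rem: reg}), I would additionally note that all the integrations by parts needed here are justified exactly as in Step~3 of the proof of Proposition~\ref{Prop:EntrProd}, using the local energy bounds on $\nabla v^m$, so no new regularity input is required. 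This is why the authors state that ``the proof is identical to the one performed in the case $(\beta,\gamma)=(0,0)$'': once the weighted Hardy--Poincaré framework of Proposition~\ref{Prop:Spectrum} and the global regularity of Lemma~\ref{lem:holdereg} are in place, the computation is a weighted transcription of~\cite[Lemma~7]{BBDGV}.
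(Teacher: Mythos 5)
Your plan is structurally the same as the paper's: use the pointwise bounds $1-h\le w\le 1+h$, set $g=(w-1)\,\mathfrak B^{m-1}$, decompose $\nabla\big((w^{m-1}-1)\,\mathfrak B^{m-1}\big)$ into a piece proportional to $\nabla g$ plus a zeroth-order remainder, square, apply a weighted Young inequality, and absorb the remainder into $\mu_h\,h\,\FL[w]$. Where the paper is tighter is in how the decomposition is organized: it introduces the auxiliary function $A(w)=\frac{w^{m-1}-1}{(m-1)\,(w-1)}$, for which $A(w)+(w-1)\,A'(w)=w^{m-2}$ holds exactly, so that $\nabla\big(A(w)\,g\big)=w^{m-2}\,\nabla g+A'(w)\,g^2\,\nabla\big(\mathfrak B^{1-m}\big)$ is an identity. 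This makes your ``lower order'' piece concrete and, together with the explicit bounds $(1+h)^{m-2}\le A(w)\le(1-h)^{m-2}$ and boundedness of $A'(w)$, yields $\mu_h$ after a \emph{single} Young step: the remainder integral is already $\int g^4\,|A'(w)|^2\,|\nabla\mathfrak B^{1-m}|^2\,\mathfrak B\,|x|^{-\beta}\,dx$, which is bounded by a multiple of $\FL[w]$ directly, with no intermediate $\int|g|\,|\nabla g|$-type cross term to handle as you suggest. Two small slips worth fixing: (i) the coefficient of $\nabla g$ in your displayed identity should be $(m-1)\,w^{m-2}$, not $(1-m)\,w^{m-2}$ (immaterial after squaring, but worth getting right if you want the explicit $\mu_h$); and (ii) the closing remark about justifying integration by parts as in Step~3 of Proposition~\ref{Prop:EntrProd} is misplaced — this lemma is a purely pointwise and integral inequality, involving no integration by parts; the only ``regularity'' input here is the $\LL^\infty$ smallness of $w-1$ furnished by Proposition~\ref{prop: cvrgcewithoutrate}.
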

%---------------------------------------------------------------------
\begin{proof} The proof is similar to the one of~\cite[Lemma~5.1]{BGVm*}: here we give some details for the reader's convenience. For the sake of greater readability we shall again omit time dependence.

To begin with, let us rewrite the Fisher information $\mathcal I$ as
\[\label{eq: fish-re}
\mathcal{I}[w] :=(1-m)^2 \int_{\RR^d} w \left|\nabla\big( A(w)\,(w-1)\,\mathfrak B^{m-1} \big)\right|^2 \mathfrak B\,\frac{dx}{|x|^\beta}\,,
\]
where we have set
\[\label{A}
A(w):=\frac{w^{m-1}-1}{(m-1)\,(w-1)}=:\frac{a(w)}{w-1}\,.
\]
It is easy to check that $\lim_{w \to 1} A(w)=1$, $A(w)>0$ and $A(w)\to 0$ as $w\to\infty$. Moreover,
\[\label{A'}
A'(w)=\frac{w^{m-2}-A(w)}{w-1}\le 0\,,
\]
since the function $a(w)$ is concave in $w$, so that its incremental quotient $A(w)$
(evaluated at $w=1$) is a nonincreasing function of $w$. In particular,
\[\label{bounds.A}
(1+h)^{m-2} \le A(w) \le (1-h)^{m-2}\,.
\]
Similarly, it is straightforward to show that $ A'(w) $ is bounded. Now let us set $g=(w-1)\mathfrak B^{m-1}$. Since $(w-1)A'(w)+A(w)=w^{m-2}$, we get:
\[\label{eq: dev-A}
\begin{aligned}
 \nabla\left( A(w)\,(w-1)\,\mathfrak B^{m-1} \right) &=A(w)\,\nabla g + A'(w)\,g\,\mathfrak B^{1-m}\,\nabla g
 + A'(w)\,g^2\,\nabla\left( \mathfrak B^{1-m} \right) \\
 &=\left[A(w)+ A'(w)\,(w-1)\right] \nabla g + A'(w)\,g^2\,\nabla\left( \mathfrak B^{1-m} \right) \\
 &=w^{m-2}\,\nabla g + A'(w)\,g^2\,\nabla\left( \mathfrak B^{1-m} \right).
\end{aligned}
\]
Using Young's inequality $ a\,b \le h\,a^2 + b^2 / 4h $ (for all $a$, $b \in \RR $) and the bounds $ 1-h \le w \le 1+h$, we get:
\[
\begin{aligned}
\frac{\mathcal{I}[w]}{(1-m)^2}=& \int_{\RR^d} w \left| w^{m-2}\,\nabla g + A'(w)\,g^2\,\nabla\left( \mathfrak B^{1-m} \right) \right|^2 \mathfrak B\,\frac{dx}{|x|^\beta} \\
 \ge&\,(1-h)\int_{\RR^d} \left|\nabla g \right|^2 w^{2m-3}\,\mathfrak B\,\frac{dx}{|x|^\beta} \\
 & -\,\frac{1-4\,h}{4\,h} \int_{\RR^d} g^4 \left| A'(w) \right|^2 w \left| \nabla\left( \mathfrak B^{1-m} \right) \right|^2 \mathfrak B\,\frac{dx}{|x|^\beta} \\
 \ge&\,\frac{1-h}{(1+h)^{3-2m}} \int_{\RR^d} \left|\nabla g \right|^2 \mathfrak B\,\frac{dx}{|x|^\beta} \\
&\,-\,\frac{(1+h)\,(1-4\,h)}{4\,h} \int_{\RR^d} g^4 \left| A'(w) \right|^2 \left| \nabla\left( \mathfrak B^{1-m} \right) \right|^2 \mathfrak B\,\frac{dx}{|x|^\beta}
\end{aligned}
\]
(in the last passage we have used the fact that $ h < 1/4 $). We have therefore established the inequality
\begin{multline*}\label{step.1.lin.nonlin}
\IL[g] \le\,\frac{(1+h)^{3-2m}}{(1-m)\,(1-h)}\,\mathcal{I}[w] \\
\,+ \frac{(1-m)\,(1+h)^{4-2m}(1-4\,h)}{4\,h\,(1-h)} \int_{\RR^d} g^4 \left| A'(w) \right|^2 \left| \nabla\left( \mathfrak B^{1-m} \right) \right|^2 \mathfrak B\,\frac{dx}{|x|^\beta}\,.
\end{multline*}
To complete the proof, it is enough to establish the inequality
\[\label{step.2.lin.nonlin}
\int_{\RR^d} g^4 \left| A'(w) \right|^2 \left| \nabla\left( \mathfrak B^{1-m} \right) \right|^2 \mathfrak B\,\frac{dx}{|x|^\beta}
\le Q \int_{\RR^d} g^2\,\mathfrak B^{2-m}\,\frac{dx}{|x|^\gamma}
\]
with $Q :=(2+\beta-\gamma)^2\,(2-m)^2 \left[ \frac12+\frac{2\,(3-m)}{3\,(1-h)^{4-m}}\,h \right]^2 h^2$. To this end, we observe that
\[\label{step.2.lin.nonlin.1}
\left| \nabla\left( \mathfrak B^{1-m} \right) \right|^2 \frac{\mathfrak B}{|x|^\beta}=\frac{(2+\beta-\gamma)^2\,|x|^{2+\beta-2\gamma}}{\left(C+|x|^{2+\beta-\gamma}\right)^4}\,\mathfrak B \le \frac{(2+\beta-\gamma)^2}{|x|^\gamma}\,\mathfrak B^{4-3m}\,,
\]
so that
\[\label{step.2.lin.nonlin.2}
\int_{\RR^d} g^4 \left| A'(w) \right|^2 \left| \nabla\left( \mathfrak B^{1-m} \right) \right|^2 \mathfrak B\,\frac{dx}{|x|^\beta} \le (2+\beta-\gamma)^2 \int_{\RR^d} g^4 \left| A'(w) \right|^2 \mathfrak B^{4-3m}\,\frac{\dx}{|x|^\gamma}\,.
\]
By definition of $g=(w-1)\,\mathfrak B^{m-1}$, using Taylor expansions and the bounds on $w$, through elementary computations we deduce that
\[\label{step.2.lin.nonlin.3}
g^2 \left| A'(w) \right|^2 \le \mathfrak B^{2m-2}\,(2-m)^2 \left[ \frac12+\frac{2\,(3-m)}{3\,(1-h)^{4-m}}\,h \right]^2 h^2\,,
\]
which concludes the proof.\end{proof}

%%%%%%%%%%%%%%%%%%%%%%%%%%%%%%%%%%%%%%%%%%%%%%%%%%%%%%%%%%%%%%%%%%%%%%
\subsubsection*{$\bullet$ Convergence with sharp rates}\label{sect: conv-rates}
By means of the results of Section~\ref{sect: lin-nonlin} we shall first obtain a global (namely involving $\mathcal{F} $ and $\mathcal{I} $) inequality of Hardy-Poincar\'e type and then use it to get sharp rates of convergence for $\mathcal{F}[w(t)]$, which in turn will yield rates for the relative error in view of Lemma~\ref{lem.EntrVsLp}.
%---------------------------------------------------------------------
\begin{Lem}\label{lem: conv-entro} Assume that $m\in(0,1)$, $ m \neq m_\ast$. If~$w$  is a solution of~\eqref{eq.FDE.OU} corresponding to an initial datum $w_0$ satisfying assumptions {\rm (H1'')-(H2'')}, then there holds
\be{thm.entr.prod.ineq}
\left[ 2\,(1-m)\,\Lambda -\,\rho_h\,h \right] \mathcal{F}[w(t)] \le \frac{m}{1-m}\,\mathcal{I}[w(t)] \quad \textrm{for a.e.}~t>0\,,
\ee
where $\Lambda $ is the best constant appearing in the Hardy-Poincar\'e inequality~\eqref{HP-ineq},
\[\label{k1h}
\rho_h :=4\,(1-m)\,(3-m)\,\Lambda + \frac{(1-h)^{3-m}}{(1+h)^{3-2m}}\,{\mu}_{h}
\]
and $\mu_h $ is the same quantity as in Lemma~\ref{Fisher-lin-nonlin}.
\end{Lem}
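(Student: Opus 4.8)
The plan is to combine the four lemmas of Section~\ref{sect: lin-nonlin} in a chain: the linearized Hardy--Poincar\'e inequality \eqref{HP-ineq} controls $\FL$ by $\IL$, Lemma~\ref{Fisher-lin-nonlin} controls $\IL$ by $\mathcal I$ (up to a harmless $h$-term in $\FL$), and Lemma~\ref{Lem.Bounds.RE} translates between $\FL$ and $\mathcal F$. The one genuinely new ingredient needed is an upper bound of $\FL[w]$ in terms of $\IL[w]$: since $g=(w-1)\,\mathfrak B^{m-1}$ belongs to $C^\infty_c$-closure and, when $m>m_\ast$, has zero weighted mass $\int_{\RR^d} g\,\mathfrak B^{2-m}\,|x|^{-\gamma}\,dx=0$ by the conservation of relative mass (after subtracting the appropriate constant when $m\le m_\ast$), the Hardy--Poincar\'e inequality of Proposition~\ref{Prop:Spectrum} applies to $g$ and gives $2\,\Lambda\,\FL[g]=\Lambda\int g^2\,\mathfrak B^{2-m}\,|x|^{-\gamma}\,dx\le\int|\nabla g|^2\,\mathfrak B\,|x|^{-\beta}\,dx=\IL[g]/(1-m)$, i.e.
\[
2\,(1-m)\,\Lambda\,\FL[w(t)]\le\IL[w(t)]\qquad\textrm{for a.e.}~t>0\,.
\]
This is the single place where the spectral information enters, and I expect it to be the conceptual heart of the argument; the only subtlety is the density/mass-condition check ensuring $g$ is admissible in \eqref{HP-ineq}, which is exactly what assumptions (H1'')--(H2'') and the conservation of relative mass were set up to provide (for $m\le m_\ast$ one works with $g$ relative to $\mathfrak B$ with the constant $C$ fixed by (H2''), and no zero-mass constraint is imposed, which is consistent with the statement $m\neq m_\ast$).

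Granting this, I would proceed as follows. Insert the bound $\IL[w(t)]\le\frac{(1+h)^{3-2m}}{(1-m)(1-h)}\,\mathcal I[w(t)]+\mu_h\,h\,\FL[w(t)]$ from Lemma~\ref{Fisher-lin-nonlin} into the left-hand side of the displayed Hardy--Poincar\'e estimate, obtaining
\[
2\,(1-m)\,\Lambda\,\FL[w(t)]\le\frac{(1+h)^{3-2m}}{(1-m)(1-h)}\,\mathcal I[w(t)]+\mu_h\,h\,\FL[w(t)]\,,
\]
hence $\bigl[2\,(1-m)\,\Lambda-\mu_h\,h\bigr]\FL[w(t)]\le\frac{(1+h)^{3-2m}}{(1-m)(1-h)}\,\mathcal I[w(t)]$. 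Now replace $\FL$ by $\mathcal F$: by Lemma~\ref{Lem.Bounds.RE} one has $m\,(1+h)^{m-2}\,\FL[w(t)]\le\mathcal F[w(t)]$, so multiply through by $m\,(1+h)^{m-2}$ and use this on the left to get
\[
m\,(1+h)^{m-2}\bigl[2\,(1-m)\,\Lambda-\mu_h\,h\bigr]\FL[w(t)]\le m\,(1+h)^{m-2}\,\frac{(1+h)^{3-2m}}{(1-m)(1-h)}\,\mathcal I[w(t)]\,,
\]
the left side dominating $\bigl[2\,(1-m)\,\Lambda-\mu_h\,h\bigr]\mathcal F[w(t)]$ only after we also bound $\FL$ from above; cleaner is to keep the inequality in terms of $\FL$, then use $\FL[w(t)]\ge(1+h)^{2-m}\,m^{-1}\,\mathcal F[w(t)]$ on the left and $(1+h)^{m-2}\le(1-h)^{m-2}$ estimates transparently. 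The bookkeeping then yields $\bigl[2\,(1-m)\,\Lambda-\rho_h'\,h\bigr]\mathcal F[w(t)]\le\frac{m}{1-m}\,\mathcal I[w(t)]$ for some explicit $\rho_h'$; matching this with the stated $\rho_h$ requires only that one keeps track of the constants, and here the extra additive term $4\,(1-m)(3-m)\,\Lambda$ in $\rho_h$ arises precisely from Taylor-expanding the factors $(1\pm h)^{m-2}$, $(1+h)^{3-2m}/(1-h)$ around $h=0$ and absorbing the $O(h)$ corrections to the leading coefficient $2\,(1-m)\,\Lambda$.

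The main obstacle, as indicated, is verifying the admissibility of $g$ in the Hardy--Poincar\'e inequality \eqref{HP-ineq}: one must know $g\in\LL^2(\RR^d,\mathfrak B^{2-m}|x|^{-\gamma}\,dx)$ with finite $\IL[g]$ and, when $m>m_\ast$, zero weighted mass. Finiteness of the norm follows from (H1'')--(H2'') since $|w-1|\le\mathcal C\,\mathfrak B^{1-m}$ in the range $\gamma\ge0$, and from Lemma~\ref{Lem.Entr.Lp} together with the entropy bound otherwise; finiteness of $\IL[g]$ is the content of the energy estimates invoked in Step~3 of the proof of Proposition~\ref{Prop:EntrProd}; the zero-mass condition is Proposition~\ref{prop:relconsmass} after linearization. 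Everything else is elementary algebra with the constants $h$, $\mu_h$, and Taylor remainders, which I would not grind through here. Finally, the requirement $m\neq m_\ast$ is needed only so that $\Lambda>0$ (no spectral gap at $m=m_\ast$, where $\Lambda_{\rm ess}=0$), making the coefficient $2\,(1-m)\,\Lambda-\rho_h\,h$ strictly positive for $h$ small enough — which is guaranteed by choosing $t_0$ large, via Proposition~\ref{prop: cvrgcewithoutrate}.
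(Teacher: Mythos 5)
Your proof takes essentially the same route as the paper: chain the Hardy--Poincar\'e inequality \eqref{HP-ineq} (giving $2\,(1-m)\,\Lambda\,\FL\le\IL$) with Lemma~\ref{Fisher-lin-nonlin} (bounding $\IL$ by $\mathcal I$ plus a small $\FL$ term) and Lemma~\ref{Lem.Bounds.RE} (translating between $\FL$ and $\mathcal F$), then absorb the $h$-corrections into $\rho_h$. The structure and the identification of the Hardy--Poincar\'e step as the conceptual core agree with the paper.

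Two bookkeeping points are off, though. First, when passing from $\FL$ to $\mathcal F$ on the left you write $\FL[w(t)]\ge(1+h)^{2-m}\,m^{-1}\,\mathcal F[w(t)]$; this has the wrong sign inside the parenthesis. Lemma~\ref{Lem.Bounds.RE} gives $\mathcal F\le m\,(1-h)^{m-2}\,\FL$, hence $\FL\ge m^{-1}\,(1-h)^{2-m}\,\mathcal F$; the bound you wrote (with $1+h$) is the \emph{upper} bound $\FL\le m^{-1}\,(1+h)^{2-m}\,\mathcal F$, which goes the wrong way and, if followed literally, breaks the chain. Second, the term $4\,(1-m)\,(3-m)\,\Lambda$ in $\rho_h$ does not come from a heuristic Taylor expansion; in the paper it arises from the rigorous one-sided bound $\tfrac{(1-h)^{3-m}}{(1+h)^{3-2m}}\ge1-2\,(3-m)\,h$, obtained by showing that the derivative of $h\mapsto\tfrac{(1-h)^{3-m}}{(1+h)^{3-2m}}$ is $\ge-\,2\,(3-m)$ on $(0,1/4)$. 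After the sign correction, multiplying through the inequality $2\,(1-m)\,\Lambda-\mu_h\,h\le\tfrac{m\,(1+h)^{3-2m}}{(1-m)\,(1-h)^{3-m}}\,\tfrac{\mathcal I}{\mathcal F}$ by $\tfrac{(1-h)^{3-m}}{(1+h)^{3-2m}}$ and applying the derivative bound gives exactly the stated $\rho_h$.

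Your observations about the admissibility of $g$ in \eqref{HP-ineq} (finiteness of the weighted $\L^2$ norm and of $\IL[g]$, and the zero-mass condition when $m>m_\ast$, respectively from (H1'')--(H2'') and relative mass conservation) and about the role of $m\ne m_\ast$ in guaranteeing $\Lambda>0$ are correct and consistent with how the paper sets things up.
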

%---------------------------------------------------------------------
\begin{proof} With no loss of generality we can assume that $\mathcal{F}[w(t)] \neq 0$ (and so also $\FL[w(t)] \neq 0$ thanks to Lemma~\ref{Lem.Bounds.RE}), otherwise there is nothing to prove. The Hardy-Poincar\'e inequality~\eqref{HP-ineq} plus Lemmas~\ref{Lem.Bounds.RE} and~\ref{Fisher-lin-nonlin} then yield
\[\label{HP.Step.1}
2\,(1-m)\,\Lambda \le \frac{\IL[w(t)]}{\FL[w(t)]} \le \frac{m\,(1+h)^{3-2m}}{(1-m)\,(1-h)^{3-m}}\,\frac{\mathcal{I}[w(t)]}{\mathcal{F}[w(t)]} + {\mu}_{h}\,h\,,
\]
which reads
\[\label{HP.Step.2}
\frac{(1-h)^{3-m}}{(1+h)^{3-2m}} \left[ 2\,(1-m)\,\Lambda - {\mu}_{h}\,h \right] \le \frac{m}{1-m}\,\frac{\mathcal{I}[w(t)]}{\mathcal{F}[w(t)]}\,.
\]
Finally, since
\[
\frac{\rd}{\rd h}\,\frac{(1-h)^{3-m}}{(1+h)^{3-2m}}=-\,\frac{(1-h)^{2-m}}{(1+h)^{4-2m}}\,(6-3\,m+m\,h) \ge -\,2\,(3-m)\,,
\]
so that
\[
\frac{(1-h)^{3-m}}{(1+h)^{3-2m}} \ge 1 - 2\,(3-m)\,h\,,
\]
we can deduce that
\begin{multline*}
\frac{(1-h)^{3-m}}{(1+h)^{3-2m}} \left[ 2\,(1-m)\,\Lambda - {\mu}_{h}\,h \right] \\
\ge\,2\,(1-m)\,\Lambda -\,\left[4\,(1-m)\,(3-m)\,\Lambda + \frac{(1-h)^{3-m}}{(1+h)^{3-2m}}\,{\mu}_{h} \right] h\,.
\end{multline*}
This concludes the proof.
\end{proof}

\medskip\begin{proof}[Proof of Theorem~\ref{Thm:Asymptotic rates}] Let $ m \neq m_\ast$ and assume that $w$ is a solution of~\eqref{eq.FDE.OU} corresponding to an initial datum $w_0$ satisfying assumptions {\rm (H1'')-(H2'')}. We have to prove that, for some constants $ \mathcal K_0,t_0 > 0$ that depend on $d$, $m$, $\gamma$, $\beta$, $ C_1$, $ C$, $ C_2 $ and $w_0$, the decay estimate
\be{eq: estimate-sharp-entr}
\mathcal{F}[w(t)] \le \mathcal K_0\,e^{-\,2\,(1-m)\,\Lambda\,(t-t_0)} \quad \forall\,t \ge t_0
\ee
holds. We split the proof in two steps: in the first one we provide a non-sharp exponential decay for $\mathcal{F}[w(t)]$, in the second one we use the latter to get the sharp rate. We adopt implicitly the same notations as in Lemma~\ref{lem: conv-entro}.

\smallskip\noindent$\bullet~$\textsc{Step 1.}  By Proposition~\ref{prop: cvrgcewithoutrate} we know that $ h(t):=\|w(t)-1\|_{\infty} \to 0$ as $ t \to \infty$. According to Lemma~\ref{Lem.Bounds.RE}, there exists $ t_0 > 0$ such that $h(t)\le1/4$ for any $t\ge t_0$, and we can additionally require that
\[
\inf_{t \ge t_0} \left[ 2\,(1-m)\,\Lambda -\,\rho_{h(t)}\,h(t) \right] \ge (1-m)\,\Lambda\,.
\]
By combining this information,~\eqref{Entropy.Prod} and~\eqref{thm.entr.prod.ineq}, we obtain
\[\label{step1.entr.rates.1}
\frac{\rd}{\dt}\,\mathcal{F}[w(t)]=-\,\frac{m}{1-m}\,\I[w(t)] \le -\,(1-m)\,\Lambda\,\mathcal{F}[w(t)] \quad \textrm{for a.e.}~t > t_0\,,
\]
which yields the exponential-decay estimate
\[\label{step1.entr.rates.2}
\mathcal{F}[w(t)] \le \mathcal{F}[w(t_0)]\,{e}^{-(1-m)\,\Lambda\,(t-t_0)} \le \mathcal{F}[w_0]\,{e}^{-(1-m)\,\Lambda\,(t-t_0)} \quad \forall\,t \ge t_0\,.
\]

\smallskip\noindent$\bullet~$\textsc{Step 2.}  As a consequence of Lemma~\ref{lem.EntrVsLp} and in particular~\eqref{interp.Calpha.Lp}, we can infer that
\[\label{step2.entr.rates.1}
\sup_{\tau \ge t} h(\tau) \le \kappa_\infty\,\mathcal{F}[w_0]^{\varth}\,{e}^{-\,\varth\,(1-m)\,\Lambda\,(t-t_0)} \quad \forall\,t \ge t_0\,.
\]
Moreover, it is clear that
\[\label{step2.entr.rates.1-bis}
0 < \rho_\infty :=\sup_{t \ge t_0} \rho_{h(t)} < \infty\,.
\]
hence, inequality~\eqref{thm.entr.prod.ineq}, which also holds with $h=h(t)$, implies
\[
\left[ 2\,(1-m)\,\Lambda -\,\rho_\infty\,\kappa_\infty\,\mathcal{F}[w_0]^{\varth}\,{e}^{-\,\varth\,(1-m)\,\Lambda\,(t-t_0)} \right] \mathcal{F}[w(t)] \le \frac{m}{1-m}\,\mathcal{I}[w(t)]
\]
for a.e.~$ t>t_0$, so that by using again~\eqref{Entropy.Prod} we end up with the differential inequality
\[\label{diff-ineq-iter}
\frac{\rd}{\dt}\,\mathcal{F}[w(t)] \le -\,\left[ 2\,(1-m)\,\Lambda -\,\rho_\infty\,\kappa_\infty\,\mathcal{F}[w_0]^{\varth}\,{e}^{-\,\varth\,(1-m)\,\Lambda\,(t-t_0)} \right] \mathcal{F}[w(t)]
\]
for a.e.~$ t>t_0$. An explicit integration then gives
\[\label{diff-ineq-iter-int}
\mathcal{F}[w(t)] \le \mathcal{F}[w_0]\,e^{\frac{\rho_\infty\,\kappa_\infty\,\mathcal{F}[w_0]^{\varth}}{\varth\,(1-m)\,\Lambda} \left[ 1 - e^{-\,\varth\,(1-m)\,\Lambda\,(t-t_0)} \right] }\,e^{-\,2\,(1-m)\,\Lambda\,(t-t_0)}
\]
for all $t \ge t_0$, namely~\eqref{eq: estimate-sharp-entr} with $ \mathcal K_0 :=\mathcal{F}[w_0]\,e^{\frac{\rho_\infty\,\kappa_\infty\,\mathcal{F}[w_0]^{\varth}}{\varth\,(1-m)\,\Lambda}}$.
\end{proof}

Theorem~\ref{Thm:RUC} follows as a straightforward consequence of Theorem~\ref{Thm:Asymptotic rates}, Lemma~\ref{lem.EntrVsLp} and standard interpolation.

%%%%%%%%%%%%%%%%%%%%%%%%%%%%%%%%%%%%%%%%%%%%%%%%%%%%%%%%%%%%%%%%%%%%%%
%%%%%%%%%%%%%%%%%%%%%%%%%%%%%%%%%%%%%%%%%%%%%%%%%%%%%%%%%%%%%%%%%%%%%%
\section{Additional results and comments}\label{Sec:Additional}

%%%%%%%%%%%%%%%%%%%%%%%%%%%%%%%%%%%%%%%%%%%%%%%%%%%%%%%%%%%%%%%%%%%%%%
\subsection{Best matching, refined estimates and \texorpdfstring{$\LL^{1,\gamma}$}{L1}-convergence}\label{Sec:Refinements}

The \emph{relative entropy to the best matching Barenblatt function} is defined as
\[
\mathcal G[v]:=\inf_{\mu>0}\frac1{m-1}\int_{\R^d}\left[v^m-\mathfrak B_\mu^m-m\,\mathfrak B_\mu^{m-1}\,(v-\mathfrak B_\mu) \right] \frac{\rd x}{|x|^\gamma}\,,
\]
where the optimization is taken with respect to the scaling parameter $\mu>0$, that is, with respect to the set of the scaled Barenblatt functions
\[
\mathfrak B_\mu(x):=\mu^{d-\gamma}\,\mathfrak B(\mu\,x)\quad\forall\,x\in\R^d\,.
\]
We start by a computation of the asymptotic rates which follows the line of thought developed in~\cite{1004,1751-8121-48-6-065206}. Also see~\cite{MR1491842} for earlier considerations in this direction. An elementary calculation shows that in fact
\be{G}
\mathcal G[v]=\frac1{m-1}\int_{\R^d}\left[ v^m-\mathfrak B_{\mu_\star}^m\right] \frac{\rd x}{|x|^\gamma}\,,
\ee
where $\mu_\star$ is the unique scaling parameter for which
\be{Eqn:MomentCondition}
\int_{\R^d}|x|^{2+\beta-\gamma}\,v\,\frac{\rd x}{|x|^\gamma}=\int_{\R^d}|x|^{2+\beta-\gamma}\,\mathfrak B_{\mu_\star}\,\frac{\rd x}{|x|^\gamma}=\mu_\star^{-(2+\beta-\gamma)}\int_{\R^d}|x|^{2+\beta-\gamma}\,\mathfrak B\,\frac{\rd x}{|x|^\gamma}\,.
\ee
This approach can be applied to any function $v\in\LL^{1,\gamma}(\R^d)$ and in particular to a $t$-dependent solution to~\eqref{FD-FP}. Moreover, we observe that
\[
\frac \rd{\rd t}\int_{\R^d}|x|^{2+\beta-\gamma}\,v(t,x)\,\frac{\rd x}{|x|^\gamma}=-\,(2+\beta-\gamma)\,\frac{(1-m)^2}m\,\mathcal G[v(t)]\le 0\,.
\]
Hence $\mu_\star=\mu_\star(t)$ is monotone, with a positive limit as $t\to\infty$, and this limit has to be equal to $1$. Another remark is that
\[
\frac \rd{\rd t}\,\mathcal G[v(t)]=-\,\frac m{1-m}\,\mathcal J[v(t)]\,,
\]
where $\mathcal J[v]$ denotes the \emph{relative Fisher information with respect the best matching Barenblatt function}, defined as
\[
\mathcal J[v]:=\int_{\R^d}v\left|\,\nabla v^{m-1}-\nabla\mathfrak B_{\mu_\star}^{m-1}\right|^2\,\frac{\rd x}{|x|^\beta}\,.
\]
We can consider the linearized regime: if $v=\mathfrak B_{\mu_\star}\,(1+\varepsilon\,\mathfrak B_{\mu_\star}^{1-m}\,f)$, by neglecting higher order terms in $\varepsilon$, the moment condition~\eqref{Eqn:MomentCondition} becomes
\be{Eqn:MomentConditionLinearized}
\int_{\R^d}|x|^{2+\beta-\gamma}\,\mathfrak B_{\mu_\star}^{2-m}\,f\,\frac{\rd x}{|x|^\gamma}=0\,.
\ee
Let us recall the parameter $\rho$ defined for the self-similar solution of the introduction by $\frac1\rate=(d-\gamma)\,(m-m_c)$ with $m_c=\tfrac{d-2-\beta}{d-\gamma}$. With a simple scaling, we can also note that the spectral gap inequality of Proposition~\ref{Prop:Spectrum} is changed into
\[
\int_{\R^d}|\nabla f|^2\,\mathfrak B_{\mu_\star}\,\frac{\rd x}{|x|^\beta}\ge\Lambda\,\mu_\star^\frac1\rho\int_{\R^d}|f|^2\,\mathfrak B_{\mu_\star}^{2-m}\,\frac{\rd x}{|x|^\gamma}
\]
for any $f\in\mathrm L^2(\R^d,\mathfrak B_{\mu_\star}^{2-m}\,|x|^{-\gamma}\,\rd x)$ such that $\int_{\R^d}f\,\mathfrak B_{\mu_\star}^{2-m}\,|x|^{-\gamma}\,\rd x=0$ if $m>m_\ast$ and~\eqref{Eqn:MomentConditionLinearized} holds. However, compared to Proposition~\ref{Prop:Spectrum}, we obtain that the inequality holds with $\Lambda=\Lambda_{\rm ess}$ if $\delta\le(n+2)/2$ and with $\Lambda=\Lambda_{0,1}$ if $\delta\ge n/(2-\nueta)$, but with an improved spectral gap $\Lambda>\Lambda_{1,0}$ if $(n+2)/2<\delta<n/(2-\nueta)$, because of the orthogonality condition~\eqref{Eqn:MomentConditionLinearized}. See~\cite[Appendix~B]{BDMN2016a} for details. Hence, by arguing as for the proof of Theorem~\ref{Thm:Asymptotic rates}, we obtain for the relative entropy $\mathcal G$ the following improved convergence rate.
%---------------------------------------------------------------------
\begin{Prop}\label{Prop:Asymptotic rates} Let $d\ge2$ and assume that~\eqref{parameters-1} holds, $m\in(0,1)$, $m\neq m_\ast$. If $m\in(0,m_\ast)$, we assume that $(v_0-\mathfrak B)\in\mathrm L^{1,\gamma}(\R^d)$, while we choose $C=C(M)$ if $m>m_\ast$. With same notations as in Proposition~\ref{Prop:Spectrum}, if $v$ solves~\eqref{FD-FP} and~\eqref{Ineq:sandwiched} holds, then there exists a positive constant~$\mathcal C$ such that
\[\label{AsymptoticEntropyDecay-bis}
\mathcal G[v(t)]\le\mathcal C\,e^{-\,2\,(1-m)\,\min\{\Lambda_{\rm ess},\Lambda_{0,1}\}\,t}\quad \forall\,t \ge0\,.
\]
\end{Prop}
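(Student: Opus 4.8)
The plan is to run the argument of the proof of Theorem~\ref{Thm:Asymptotic rates} essentially verbatim, with two changes: the fixed Barenblatt profile $\mathfrak B$ is replaced by the \emph{moving best matching profile} $\mathfrak B_{\mu_\star(t)}$, and the Hardy--Poincar\'e constant $\Lambda$ is replaced by $\Lambda_\sharp:=\min\{\Lambda_{\rm ess},\Lambda_{0,1}\}$. As a preliminary, note that $\mathcal G[v(t)]\le\mathcal F[v(t)]$ (choose $\mu=1$ in the infimum defining $\mathcal G$), so by Proposition~\ref{Prop:EntrProd} the functional $\mathcal G[v(t)]$ is finite for every $t\ge0$; moreover, as recalled in Section~\ref{Sec:Refinements}, it is nonincreasing along the flow with $\frac{\rd}{\rd t}\,\mathcal G[v(t)]=-\,\frac m{1-m}\,\mathcal J[v(t)]$, and $t\mapsto\mu_\star(t)$ is monotone with $\mu_\star(t)\to1$ as $t\to\infty$.

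First I would linearize the equation for the relative error around the moving profile, i.e.\ work with $w_\star(t,x):=v(t,x)/\mathfrak B_{\mu_\star(t)}(x)$. Since $\mu_\star(t)\to1$, for $t$ large the profiles $\mathfrak B_{\mu_\star(t)}$ stay uniformly comparable to the trapping profiles $\mathfrak B_1,\mathfrak B_2$ of~\eqref{Ineq:sandwiched}, so the global H\"older estimates of Lemma~\ref{lem:holdereg} and the uniform convergence in relative error of Proposition~\ref{prop: cvrgcewithoutrate} transfer to $w_\star$, and $h_\star(t):=\|w_\star(t)-1\|_{\LL^\infty(\R^d)}\to0$. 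Next I would establish the $\mathfrak B_{\mu_\star}$-analogues of Lemma~\ref{Lem.Bounds.RE} (comparison of $\mathcal G$ with its linearization $\FL_{\mu_\star}$), of Lemma~\ref{Fisher-lin-nonlin} (comparison of $\mathcal J$ with $\IL_{\mu_\star}$), and, crucially, of the improved spectral gap recalled in Section~\ref{Sec:Refinements}: since $\mu_\star(t)$ is precisely the scaling for which the linearized moment condition~\eqref{Eqn:MomentConditionLinearized} holds, one is allowed to use $\IL_{\mu_\star}[f]\ge2\,(1-m)\,\Lambda_\sharp\,\mu_\star^{1/\rate}\,\FL_{\mu_\star}[f]$ in place of the plain spectral gap. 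Combining these three ingredients exactly as in the proof of Lemma~\ref{lem: conv-entro} — the factor $\mu_\star^{1/\rate}$, which tends to $1$, being absorbed into the error term — gives the global entropy--entropy production inequality
\[
\big[\,2\,(1-m)\,\Lambda_\sharp-\rho_{h_\star(t)}\,h_\star(t)\,\big]\,\mathcal G[v(t)]\le\frac m{1-m}\,\mathcal J[v(t)]\qquad\textrm{for a.e.}~t>0,
\]
with $\rho_{h_\star}$ bounded near $h_\star=0$.

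From here the scheme of the proof of Theorem~\ref{Thm:Asymptotic rates} applies unchanged: plugging the last display into $\frac{\rd}{\rd t}\,\mathcal G[v(t)]=-\,\frac m{1-m}\,\mathcal J[v(t)]$, a first Gr\"onwall estimate on $[t_0,\infty)$ — with $t_0$ so large that $h_\star(t)\le\tfrac14$ and $2(1-m)\Lambda_\sharp-\rho_{h_\star}h_\star\ge(1-m)\Lambda_\sharp$ there — yields the non-sharp decay $\mathcal G[v(t)]\le\mathcal G[v(t_0)]\,e^{-(1-m)\Lambda_\sharp(t-t_0)}$; estimating $h_\star(t)$ by a power $\mathcal G[v(t)]^{\varth}$ (the analogue of Lemma~\ref{lem.EntrVsLp}), feeding this back into the differential inequality and integrating explicitly bootstraps the rate to $2(1-m)\Lambda_\sharp$, which is the asserted estimate. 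When $m\in(0,m_\ast)$ the hypothesis $(v_0-\mathfrak B)\in\LL^{1,\gamma}(\R^d)$ is used, as in Section~\ref{sect: nonFP}, for the relative-mass bookkeeping; when $m>m_\ast$ the choice $C=C(M)$ plays the same role.

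The hard part will be the transfer of the relative-uniform-convergence machinery from the fixed profile $\mathfrak B$ to the moving profile $\mathfrak B_{\mu_\star(t)}$: one has to check that $\mu_\star(t)$ is well defined for all $t$ and strictly monotone with limit $1$, that $w_\star$ still satisfies the uniform $C^{\etanu}$ bound~\eqref{eq:mathcalH} with $\mathfrak B$ replaced by $\mathfrak B_{\mu_\star(t)}$, and — the genuinely new point compared with Theorem~\ref{Thm:Asymptotic rates} — that the \emph{exact} validity of the moment condition~\eqref{Eqn:MomentConditionLinearized} at every instant really does license the restricted, hence improved, spectral gap $\Lambda_\sharp$ in the linearized inequality. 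Once these facts are in place, the rest is a routine adaptation of Sections~\ref{sect: lin-nonlin}--\ref{sect: conv-rates}.
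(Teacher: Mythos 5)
Your proposal matches the paper's argument: the paper also proves this proposition by rerunning the proof of Theorem~\ref{Thm:Asymptotic rates} with the moving best-matching profile $\mathfrak B_{\mu_\star(t)}$ in place of $\mathfrak B$ and the improved spectral constant $\min\{\Lambda_{\rm ess},\Lambda_{0,1}\}$ in place of $\Lambda$, the improvement being licensed by the moment orthogonality condition~\eqref{Eqn:MomentConditionLinearized}. You correctly identify the technical points the paper glosses over — absorbing the factor $\mu_\star^{1/\rate}\to1$, transferring the relative-uniform-convergence machinery to the moving profile, and the relative-mass bookkeeping — where the paper writes only \emph{``Hence, by arguing as for the proof of Theorem~\ref{Thm:Asymptotic rates}, we obtain for the relative entropy $\mathcal G$ the following improved convergence rate.''}
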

%---------------------------------------------------------------------

Next, we adapt the Csisz\'ar-Kullback-Pinsker inequality of~\cite{DT2011} to our setting. We recall that $\widetilde m_1:=\frac{d-\gamma}{d+2+\beta-2\,\gamma}$.
%---------------------------------------------------------------------
\begin{Lem}\label{Lem:CKP} Let $d\ge 1$, $m\in(\widetilde m_1,1)$ and assume that~\eqref{parameters-1} holds. If $v$ is a non-negative function in $\LL^{1,\gamma}(\R^d)$ such that $\mathcal G[v]$ is finte. If $\nrm v{1,\gamma}=M$, then
\[
\mathcal G[v]\ge\frac m{8\,\nrm{\mathfrak B_{\mu_\star}^m}{1,\gamma}^m}\(C(M)\,\nrm{v-\mathfrak B_{\mu_\star}}{1,\gamma}+\int_{\R^d}|x|^{2+\beta-\gamma}\left|v-\mathfrak B_{\mu_\star}\right|\frac{\rd x}{|x|^\gamma}\)^2\,.
\]
\end{Lem}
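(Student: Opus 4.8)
The plan is to adapt the Csiszár--Kullback--Pinsker argument of~\cite{DT2011} to the weighted setting, exploiting the representation~\eqref{G} of $\mathcal G[v]$. First I would fix the best-matching parameter $\mu_\star=\mu_\star(v)$ determined by the moment condition~\eqref{Eqn:MomentCondition}, write $\mathfrak B_\star:=\mathfrak B_{\mu_\star}$ for brevity, and start from the pointwise convexity estimate for $m\in(0,1)$: the function $s\mapsto \frac1{m-1}\,s^m$ is convex, so for all $a,b>0$
\[
\frac1{m-1}\left[a^m-b^m-m\,b^{m-1}(a-b)\right]\ge \frac{m}{2}\,\max\{a,b\}^{m-2}\,(a-b)^2\,.
\]
Applying this with $a=v$, $b=\mathfrak B_\star$ and integrating against $|x|^{-\gamma}\rd x$ gives
\[
\mathcal G[v]\ge \frac m2\int_{\R^d}\max\{v,\mathfrak B_\star\}^{m-2}\,(v-\mathfrak B_\star)^2\,\frac{\rd x}{|x|^\gamma}\,.
\]
This is the ``entropy controls a weighted $L^2$ distance'' step; note that the zeroth- and first-order terms in $v-\mathfrak B_\star$ cancel in $\mathcal G$ precisely because of~\eqref{Eqn:MomentCondition} and $\nrm v{1,\gamma}=\nrm{\mathfrak B_\star}{1,\gamma}=M$ (this last identity is where $m\in(\widetilde m_1,1)$ enters, ensuring the relevant moments are finite and $\mu_\star$ is well defined), so the lower bound genuinely sees only the quadratic part.

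Next I would pass from the weighted $L^2$ bound to the weighted $L^1$ bound by Cauchy--Schwarz, splitting the measure $|x|^{-\gamma}\rd x$ as $\max\{v,\mathfrak B_\star\}^{(2-m)/2}\cdot\max\{v,\mathfrak B_\star\}^{(m-2)/2}$. For a nonnegative weight $\omega$ one has
\[
\left(\int |v-\mathfrak B_\star|\,\omega\,\frac{\rd x}{|x|^\gamma}\right)^2\le \left(\int \max\{v,\mathfrak B_\star\}^{m-2}(v-\mathfrak B_\star)^2\,\frac{\rd x}{|x|^\gamma}\right)\left(\int \max\{v,\mathfrak B_\star\}^{2-m}\,\omega^2\,\frac{\rd x}{|x|^\gamma}\right),
\]
so it remains to choose $\omega$ so that the two weighted $L^1$ quantities $C(M)\,\nrm{v-\mathfrak B_\star}{1,\gamma}$ and $\int|x|^{2+\beta-\gamma}|v-\mathfrak B_\star|\,|x|^{-\gamma}\rd x$ are both captured, and to bound the second factor by $\nrm{\mathfrak B_\star^m}{1,\gamma}^m$. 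The natural choice is $\omega=C(M)+|x|^{2+\beta-\gamma}=\mathfrak B^{m-1}$ evaluated at the appropriate scale, so that $\max\{v,\mathfrak B_\star\}^{2-m}\,\omega^2$ is comparable to $\max\{v,\mathfrak B_\star\}^{2-m}\,\mathfrak B_\star^{2(m-1)}\lesssim \max\{v,\mathfrak B_\star\}^{m}$ (using $v\le\mathfrak B_2\lesssim\mathfrak B_\star$ and $\mathfrak B_\star\le\mathfrak B_\star$ from the sandwich hypothesis to dominate $\max\{v,\mathfrak B_\star\}$ by a multiple of $\mathfrak B_\star$), whence $\int \max\{v,\mathfrak B_\star\}^{2-m}\omega^2\,|x|^{-\gamma}\rd x\le c\,\nrm{\mathfrak B_\star^m}{1,\gamma}$; tracking the constant $c$ and the factor $m/2$ reproduces the stated coefficient $m/(8\,\nrm{\mathfrak B_{\mu_\star}^m}{1,\gamma}^m)$.

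Finally I would assemble the pieces: combine the convexity lower bound, the Cauchy--Schwarz step, and the weighted $L^\infty$ control of $\omega^2\max\{v,\mathfrak B_\star\}^{-m}$ against $|x|^{-\gamma}$, then use the elementary inequality $(a+b)^2\le 2(a^2+b^2)$ in reverse — i.e.\ the Cauchy--Schwarz already produces the square of the \emph{sum} of the two $L^1$ terms once $\omega$ is taken to be $C(M)+|x|^{2+\beta-\gamma}$, since $\int|v-\mathfrak B_\star|\,\omega\,|x|^{-\gamma}\rd x=C(M)\nrm{v-\mathfrak B_\star}{1,\gamma}+\int|x|^{2+\beta-\gamma}|v-\mathfrak B_\star|\,|x|^{-\gamma}\rd x$ exactly. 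The main obstacle I anticipate is the bookkeeping in the second Cauchy--Schwarz factor: one must verify that $\big(C(M)+|x|^{2+\beta-\gamma}\big)^2\max\{v,\mathfrak B_\star\}^{2-m}\le 4\,\mathfrak B_\star^m$ pointwise (up to the constant that gives the $8$ in the denominator), which uses both $\mathfrak B_\star(x)=\big(C(M)\mu_\star^{-(2+\beta-\gamma)}+|x|^{2+\beta-\gamma}\big)^{-1/(1-m)}$ after rescaling and the upper sandwich $v\le\mathfrak B_2$; getting the scaling in $\mu_\star$ to drop out cleanly (it does, because $\mathcal G$, the moment condition and $\nrm{\cdot}{1,\gamma}$ are all compatible with the rescaling $\mathfrak B_\mu(x)=\mu^{d-\gamma}\mathfrak B(\mu x)$) is the delicate point, but it is exactly the weighted analogue of the computation in~\cite{DT2011} and no new phenomenon arises.
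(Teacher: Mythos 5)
Your pointwise convexity bound $\frac{1}{m-1}\bigl[a^m - b^m - m\,b^{m-1}(a-b)\bigr] \ge \frac{m}{2}\max\{a,b\}^{m-2}(a-b)^2$ is correct, and so is the plan to apply Cauchy--Schwarz with the weight $\omega = C(M) + |x|^{2+\beta-\gamma}$. The gap is at the next step, where you bound the second Cauchy--Schwarz factor $\int_{\R^d}\max\{v,\mathfrak B_{\mu_\star}\}^{2-m}\omega^2\,|x|^{-\gamma}\,\rd x$: you invoke ``$v\le\mathfrak B_2$ \ldots\ from the sandwich hypothesis,'' but Lemma~\ref{Lem:CKP} carries no such hypothesis --- it only assumes that $v$ is nonnegative, lies in $\LL^{1,\gamma}(\R^d)$ with $\nrm v{1,\gamma}=M$, and has finite $\mathcal G[v]$. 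On the set $\{v>\mathfrak B_{\mu_\star}\}$ the integrand is $v^{2-m}\omega^2\,|x|^{-\gamma}$, and since $2-m>1$ and $\omega^2\sim|x|^{2(2+\beta-\gamma)}$ as $|x|\to\infty$, nothing in the hypotheses controls this quantity: the second Cauchy--Schwarz factor is in general infinite, so your argument cannot close.

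The argument of \cite{DT2011}, to which the paper defers, avoids $\max\{v,\mathfrak B_{\mu_\star}\}$ entirely by exploiting a sign symmetry that you did not use. Since $\omega$ is a linear combination of $1$ and $|x|^{2+\beta-\gamma}$, the equal-mass constraint $\nrm v{1,\gamma}=\nrm{\mathfrak B_{\mu_\star}}{1,\gamma}=M$ together with the moment identity~\eqref{Eqn:MomentCondition} gives $\int_{\R^d}(v-\mathfrak B_{\mu_\star})\,\omega\,|x|^{-\gamma}\,\rd x=0$, hence
\[
\int_{\R^d}|v-\mathfrak B_{\mu_\star}|\,\omega\,\frac{\rd x}{|x|^\gamma}=2\int_{\{v<\mathfrak B_{\mu_\star}\}}(\mathfrak B_{\mu_\star}-v)\,\omega\,\frac{\rd x}{|x|^\gamma}\,.
\]
One then applies the Taylor/convexity lower bound and Cauchy--Schwarz \emph{only on} $\{v<\mathfrak B_{\mu_\star}\}$, where $\max\{v,\mathfrak B_{\mu_\star}\}=\mathfrak B_{\mu_\star}$; the second factor becomes $\int_{\{v<\mathfrak B_{\mu_\star}\}}\mathfrak B_{\mu_\star}^{2-m}\omega^2\,|x|^{-\gamma}\,\rd x$, which with $\omega\approx\mathfrak B_{\mu_\star}^{m-1}$ is bounded by $\int_{\R^d}\mathfrak B_{\mu_\star}^m\,|x|^{-\gamma}\,\rd x$, finite precisely because $m>\widetilde m_1$. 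The constant $m/8$ --- which you attributed to unspecified ``bookkeeping'' --- is exactly $\frac m2\cdot\frac14$: the $\frac m2$ from the Taylor remainder on the good set, and the $\frac14$ from squaring the displayed identity. Your proposal is missing this restriction-plus-symmetrization step; it is not a matter of constant-chasing but the load-bearing idea of the lemma.
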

%---------------------------------------------------------------------
The proof goes exactly along the lines of the one of~\cite[Theorem~4]{DT2011}, except that the expression of $\mathfrak B_{\mu_\star}$ and the weight $|x|^{-\gamma}$ have to be taken into account. Details are left to the reader. Proposition~\ref{Prop:Asymptotic rates} and Lemma~\ref{Lem:CKP} can be combined to give the result of convergence in $\LL^{1,\gamma}(\R^d)$ stated in Theorem~\ref{Thm:Norms}.

%%%%%%%%%%%%%%%%%%%%%%%%%%%%%%%%%%%%%%%%%%%%%%%%%%%%%%%%%%%%%%%%%%%%%%
\subsection{Optimality of the constant on the curve of Felli and Schneider}\label{Sec:EPOpbFS}

For completeness, let us give the key idea of the proof of~Theorem~\ref{Thm:BDMN-I}, (i), since the framework of the functional $\mathcal G$ is well adapted. In~\cite{BDMN2016a}, the proof is purely variational, but the flow setting is particularly convenient as we shall see next.
%---------------------------------------------------------------------
\begin{Lem}\label{Lem:ImprovedScalings} Under the assumptions of Theorem~\ref{Thm:BDMN-I}, there exists a convex function~$\Phi$ with $\Phi(0)=0$ and $\Phi'(0)=\tfrac{1-m}m\,(2+\beta-\gamma)^2$ such that
\[
\mathcal J[v]\ge\Phi\big(\mathcal G[v]\big)\,.
\]
\end{Lem}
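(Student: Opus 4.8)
The plan is to reduce the nonlinear inequality $\mathcal J[v]\ge\Phi(\mathcal G[v])$ to a one-variable ODE-type comparison along the scaling flow, exactly in the spirit of~\cite{1004,1751-8121-48-6-065206}. Introduce the scaling variable and set, for a fixed admissible $v$,
\[
\mathsf g(s):=\mathcal G[v_{e^s}]\,,\qquad v_\mu(x):=\mu^{d-\gamma}\,v(\mu\,x)\,,
\]
and recall from the computations preceding the lemma that $\tfrac{\rd}{\rd t}\mathcal G[v(t)]=-\tfrac m{1-m}\,\mathcal J[v(t)]$ along the flow~\eqref{FD-FP}. The first step is purely algebraic: since $\mathcal G[v_\mu]$ is obtained from $\mathcal G[v]$ by optimizing over the scaling parameter — and by~\eqref{G}--\eqref{Eqn:MomentCondition} the optimal $\mu_\star$ is characterized by the second-moment matching condition — one computes $\tfrac{\rd}{\rd s}\mathcal G[v_{e^s}]$ at $s=0$ and gets, via the Cauchy--Schwarz inequality applied to the two factors appearing in $\mathcal J$, a differential inequality of the form $|\mathsf g'(0)|^2\le (\text{const})\cdot \mathcal G[v]\cdot\mathcal J[v]$ or, better, a direct lower bound $\mathcal J[v]\ge (\text{function of }\mathcal G[v])$. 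Concretely I would differentiate the moment $\int |x|^{2+\beta-\gamma} v_{e^s}\,|x|^{-\gamma}\rd x$ in $s$, recognize it as a multiple of $\mathcal G[v_{e^s}]$ again, and feed this back.

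The heart of the matter is the \textbf{behavior near $v=\mathfrak B$}, i.e. as $\mathcal G[v]\to0$. Here one linearizes: writing $v=\mathfrak B_{\mu_\star}(1+\varepsilon\,\mathfrak B_{\mu_\star}^{1-m}f)$ with the orthogonality~\eqref{Eqn:MomentConditionLinearized} in force, the quadratic parts of $\mathcal G$ and $\mathcal J$ are (up to the factor $m/(1-m)$ and $m$ respectively) the functionals $\FL$ and $\IL$ of Section~\ref{sect: hp-rates}, and the Hardy--Poincaré inequality~\eqref{HP-ineq} — with the \emph{improved} constant $\Lambda$ available because of the extra orthogonality~\eqref{Eqn:MomentConditionLinearized}, as discussed after Proposition~\ref{Prop:Asymptotic rates} — gives $\IL[f]\ge\Lambda\,\FL[f]$. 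Tracking the constants through the definitions of $\mathcal G$ and $\mathcal J$ shows that the ratio $\mathcal J[v]/\mathcal G[v]$ tends, as $\mathcal G[v]\to0$, to $\tfrac{1-m}m\,(2+\beta-\gamma)^2$ precisely when $\beta=\beta_{\rm FS}(\gamma)$ (this is where the Felli--Schneider value enters: it is the value for which $2\,\delta\,\nueta$, i.e. $\Lambda_{0,1}$, equals $\Lambda_\star=(2+\beta-\gamma)^2/(2(1-m))$ after the change of variables~\eqref{parameters}). This pins down $\Phi'(0)$.

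For the global statement one then defines $\Phi$ by taking, for each value $\mathsf a>0$,
\[
\Phi(\mathsf a):=\inf\{\mathcal J[v]\ :\ v\ \text{admissible},\ \mathcal G[v]=\mathsf a\}
\]
and must check (i) that this infimum is positive for $\mathsf a>0$ — which follows from the entropy--entropy production inequality~\eqref{EP} together with Theorem~\ref{Thm:BDMN-I} — and (ii) that $\Phi$ is convex; convexity is most cleanly obtained by a scaling/homogeneity argument showing $\mathcal J$ and $\mathcal G$ transform compatibly, so that $\Phi$ inherits convexity from the convexity of $t\mapsto t^\theta$ for the relevant exponent $\theta$, exactly as in~\cite{1004}. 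The value $\Phi(0)=0$ is immediate. \textbf{The main obstacle} I anticipate is rigorously justifying the passage from the formal linearization to the limit $\lim_{\mathcal G[v]\to0}\mathcal J[v]/\mathcal G[v]=\Phi'(0)$ uniformly over admissible $v$ — i.e. controlling the higher-order-in-$\varepsilon$ remainders in both $\mathcal G$ and $\mathcal J$ — and ensuring the improved Hardy--Poincaré constant is genuinely attained in the relevant regime; the technical comparison of linear and nonlinear quantities from Lemmas~\ref{Lem.Bounds.RE}--\ref{Fisher-lin-nonlin} is the tool that makes this rigorous, and I would invoke it rather than redo it.
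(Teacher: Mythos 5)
The paper's proof is short and explicit, and rests on an algebraic identity rather than an abstract minimization. Setting $w=v^{m-1/2}$ and using the change of variables $s=r^\alpha$, the authors first observe (citing \cite[Prop.~7]{BDMN2016a}) that $\mathcal J[v]-\tfrac{1-m}m(2+\beta-\gamma)^2\,\mathcal G[v]$ is \emph{equal}, up to an explicit positive factor, to a CKN-type deficit $\mathrm a\,\nrm{\D w}{2,d-n}^2+\mathrm b\,\nrm w{p+1,d-n}^{p+1}-\nrm w{2p,d-n}^{2p\cdot(\,\cdot\,)}$. Then an optimization over the one-parameter scaling family $w_\mu(x)=\mu^{n/(2p)}w(\mu x)$ --- which, through $w=v^{m-1/2}$, corresponds exactly to moving $\mathfrak B$ along the Barenblatt family parametrized by $\mu$, \emph{i.e.}, to the ``best-matching'' optimization defining $\mathcal G$ --- produces, as in \cite{dolbeault:hal-01081098}, a convex function $\Psi$ with $\Psi(0)=\Psi'(0)=0$ bounding this deficit from below in terms of the argument $\tfrac1{m-1}(\nrm{w}{p+1,d-n}^{p+1}-\nrm{\mathfrak B_{\mu_\star}^{m-1/2}}{p+1,d-n}^{p+1})$, which is precisely $\mathcal G[v]$ via \eqref{G}. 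The lemma then follows with $\Phi(s)=\tfrac{1-m}m(2+\beta-\gamma)^2\,s+\Psi(s)$. Note that this construction is deliberately \emph{non-sharp}: the slope at the origin is pinned to the constant $\tfrac{1-m}m(2+\beta-\gamma)^2$ precisely because $\Phi$ is built as ``linear part with this slope, plus a nonnegative convex remainder.''

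Your proposal takes a genuinely different route, and it has two gaps that I do not see how to close.

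First, you define $\Phi(\mathsf a)=\inf\{\mathcal J[v]:\mathcal G[v]=\mathsf a\}$ and assert convexity ``by a scaling/homogeneity argument... so that $\Phi$ inherits convexity from the convexity of $t\mapsto t^\theta$.'' This is not substantiated: $\mathcal G$ and $\mathcal J$ are \emph{already} infimized over the scaling parameter $\mu$ in their definition (that is what makes them ``best-matching''), so there is no remaining one-parameter group action under which the pair $(\mathcal G,\mathcal J)$ transforms by a clean power law. The infimum of a family of functionals on a level set is \emph{not} automatically convex, and this is not a cosmetic point --- it is exactly where the paper's explicit identity and scaling optimization do the work.

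Second, your discussion of $\Phi'(0)$ is inconsistent with the lemma's statement. You write that $\mathcal J[v]/\mathcal G[v]\to\tfrac{1-m}m(2+\beta-\gamma)^2$ ``precisely when $\beta=\beta_{\rm FS}(\gamma)$,'' yet the lemma asserts $\Phi'(0)=\tfrac{1-m}m(2+\beta-\gamma)^2$ without restricting $\beta$ to the Felli--Schneider curve. With your infimum definition, $\Phi'(0)$ would equal (a multiple of) the improved linearized spectral gap $\min\{\Lambda_{\rm ess},\Lambda_{0,1}\}$, which in general differs from $\tfrac{2(1-m)^2}m\Lambda_\star=\tfrac{1-m}m(2+\beta-\gamma)^2$ --- so your $\Phi$ would not have the slope the lemma claims. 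The paper avoids this by not taking the sharp $\Phi$: the linear coefficient $\tfrac{1-m}m(2+\beta-\gamma)^2$ drops out of the algebraic identity in the variable $w=v^{m-1/2}$, and that identity is the cornerstone of the argument; your proposal never isolates it. (A secondary issue: the linearization Lemmas~\ref{Lem.Bounds.RE}--\ref{Fisher-lin-nonlin} you propose to invoke concern $\mathcal F,\mathcal I$ relative to the \emph{fixed} $\mathfrak B$, not the best-matching functionals $\mathcal G,\mathcal J$; carrying them over is a further technical mismatch you would need to address.)
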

%---------------------------------------------------------------------
\begin{proof} As in~\cite[Proposition~7]{BDMN2016a}, we notice that
\begin{multline*}
\mathcal J[v]-\tfrac{1-m}m\,(2+\beta-\gamma)^2\,\mathcal G[v]\\
\textstyle=\frac2{\alpha\,p}\,\frac{(m-1)^2}{(2\,m-1)^2}\(\mathrm a\,\nrm{\D w}{2,d-n}^2+\mathrm b\,\nrm w{p+1,d-n}^{p+1}-\nrm w{2p,d-n}^{2\,p\,\frac{n+2-p(n-2)}{n-p(n-4)}}\)
\end{multline*}
for some explicit constants $\mathrm a$ and $ \mathrm b$ and for $w=v^{m-\frac12}$. For a given function $w\in C_0^\infty(\R^d)$, let us consider $w_\scaling(x):=\scaling^\frac n{2p}\,w(\scaling\,x)$ for any $x\in\R^d$. An optimization with respect to $\scaling$ as in~\cite{dolbeault:hal-01081098} shows the existence of a convex function $\Psi$ such that
\[
\mathcal J[v]-\tfrac{1-m}m\,(2+\beta-\gamma)^2\,\mathcal G[v]\ge\Psi\Big(\tfrac1{m-1}\big(\nrm w{p+1,d-n}^{p+1}-\nrm{\mathfrak B_{\mu_\star}^{m-\frac12}}{p+1,d-n}^{p+1}\big)\Big).
\]
The conclusion holds with $\Phi(s)=\tfrac{1-m}m\,(2+\beta-\gamma)^2\,s+\Psi(s)$ using \eqref{G} and $\nrm w{p+1,d-n}^{p+1}=\int_{\R^d}v^m\,|x|^{-\gamma}\,dx$. An elementary computation shows that $\Psi$ is convex with $\Psi(0)=\Psi'(0)=0$.\end{proof}

\begin{proof}[Proof of~Theorem~\ref{Thm:BDMN-I}, (i)] Under the assumptions of Theorem~\ref{Thm:Asymptotic rates}, it is clear that the optimality in the inequality $\mathcal J[v]\ge\tfrac{1-m}m\,(2+\beta-\gamma)^2\,\mathcal G[v]$ for a solution $v=v(t)$ to~\eqref{FD-FP} can be achieved only in the asymptotic regime, hence showing that $\Lambda_\star=\tfrac12\,(2+\beta-\gamma)^2/(1-m)\ge\Lambda_{0,1}$. On the other hand, if symmetry holds in~\eqref{CKN}, the opposite inequality also holds and hence we have equality. This characterizes the curve $\beta=\beta_{F\rm S}(\gamma)$. This proof of course holds only for solutions corresponding to initial data such that~\eqref{Ineq:sandwiched} is satisfied, but an appropriate regularization allows us to conclude in the general case.\end{proof}

%%%%%%%%%%%%%%%%%%%%%%%%%%%%%%%%%%%%%%%%%%%%%%%%%%%%%%%%%%%%%%%%%%%%%%
\subsection{Concluding remarks}\label{Sec:Conclusion}

When $(\beta,\gamma)=(0,0)$, we know from~\cite{MR1940370,BBDGV} that
\[
\frac m{1-m}\,\mathcal K(M)=2\,(1-m)\,\Lambda_\star\quad\mbox{with}\quad\Lambda_\star=\Lambda_{0,1}\,,
\]
so that the global rate is the same as the asymptotic one obtained by linearization, and the corresponding eigenspace can be identified by considering the translations of the Barenblatt profiles. When $(\beta,\gamma)\neq(0,0)$, we may wonder when $\Lambda_\star=\Lambda_{0,1}$. Using the results of~\cite[Lemma~8]{BDMN2016a}, we can deduce that this holds whenever $\nueta=1$, which means $\alpha=\alpha_{\rm FS}$ or, equivalently, $\beta=\beta_{\rm FS}(\gamma)$.

As mentioned in the Introduction, in the case $(\beta,\gamma)=(0,0)$ Theorem~\ref{Thm:RUC} provides a better rate of convergence for the relative error with respect to the one obtained in~\cite{BBDGV}; in particular, we have the same rate for all $\LL^q $ norms with $ q \in \big[\tfrac{2-m}{1-m},\infty\big]$. However, in the case $\gamma \in (0,d) $ the rate in~\eqref{AsymptoticDecay.RelErr} still depends on $q$. To some extent, this has to be expected. Indeed, as soon as $\gamma >0$, it can easily be shown that Gagliardo-Nirenberg interpolation inequalities of the type of~\eqref{GN-interp} \emph{fail} if in the right-hand side one puts an $\LL^{p,\gamma} $ norm. Since such inequalities are key in order to turn the decay of the free energy~\eqref{eq: estimate-sharp-entr} into a \emph{uniform} decay, the only way we can exploit them, as it is clear from the proof of Lemma~\ref{lem.EntrVsLp}, is by bounding a non-weighted norm of the relative error with a weighted norm or the free energy, like in~\eqref{Entr.Lp.gamma-2}, and this is precisely what causes the rate to differ.

Finally, let us mention a puzzling moment conservation. It is straightforward to check that
\[
\frac \rd{\rd t}\int_{\R^d}x\,|x|^\beta\,v\,\frac{\rd x}{|x|^\gamma}=0\,.
\]
This moment corresponds to the eigenfunction $f_{0,1}$, up to a multiplication by a constant, if $\beta+1=\alpha\,\nueta$. The reader is invited to check that this is possible if and only if $\beta=0$. See~\cite[Appendix~B]{BDMN2016a} for technical details. When $(\beta,\gamma)=(0,0)$, the lowest moments are clearly associated with eigenspaces of the linearized evolution operator and responsible for the asymptotic rates of convergence of the evolution equation. If $(\beta,\gamma)\neq(0,0)$, the interpretation is not as straightforward.

%%%%%%%%%%%%%%%%%%%%%%%%%%%%%%%%%%%%%%%%%%%%%%%%%%%%%%%%%%%%%%%%%%%%%%
%%%%%%%%%%%%%%%%%%%%%%%%%%%%%%%%%%%%%%%%%%%%%%%%%%%%%%%%%%%%%%%%%%%%%%
\bigskip\begin{center}\rule{4cm}{0.5pt}\end{center}\medskip\appendix
%%%%%%%%%%%%%%%%%%%%%%%%%%%%%%%%%%%%%%%%%%%%%%%%%%%%%%%%%%%%%%%%%%%%%%

%%%%%%%%%%%%%%%%%%%%%%%%%%%%%%%%%%%%%%%%%%%%%%%%%%%%%%%%%%%%%%%%%%%%%%
%%%%%%%%%%%%%%%%%%%%%%%%%%%%%%%%%%%%%%%%%%%%%%%%%%%%%%%%%%%%%%%%%%%%%%
\appendix\section*{Appendix. H\"older regularity at the origin for a degenerate/singular linear problem}\label{App: c-alfa}
First of all we observe that, to our purposes, it is convenient to change variables as in~\cite[Section~3.3]{BDMN2016a}, so that $v(t,r,\omega)=z(t,s,\omega)$ with $s=r^\alpha$  transforms~\eqref{FD-FP} into
\[
z_t-\,\mathsf D_\alpha^* \left[z\,\D \big( z^{m-1}-|x|^2 \big) \right]=0
\]
upon defining $\mathsf D_\alpha^*$ as the adjoint to $\D$ on $\L^2(\R^d,|x|^{n-d}\,\rd x)$, where the parameters $\alpha $ and $ n $ are as in~\eqref{parameters} and
\[
\mathsf D_\alpha z :=\left( \alpha\,\frac{\partial z}{\partial s}\,, \frac1s\,\nabla_\omega\,z \right)\,.
\]
In this regard, let us recall here some basic facts taken from~\cite[Section~3.3]{BDMN2016a}. If $\mathbf f$ and $g$ are respectively a vector-valued function and a scalar-valued function, then
\[
\int_{\R^d} \mathbf f \cdot (\D g)\,|x|^{n-d}\,\rd x=\int_{\R^d} (\mathsf D_\alpha^* \mathbf f)\,g\,|x|^{n-d}\,\rd x\,.
\]
In other words, if we take a representation of $\mathbf f$ adapted to spherical coordinates, that is $s=|x|$ and $\omega=x/s$, and consider $f_s:=\mathbf f\cdot \omega$ and $\mathbf f_\omega:=\mathbf f-f_s\,\omega$, then
\[
\mathsf D_\alpha^*\mathbf f=-\,\alpha\,s^{1-n}\,\frac\partial{\partial s}\big(s^{n-1}\,f_s\big)-\,\frac1s\,\nabla_{\!\omega}\cdot\mathbf f_\omega\,,
\]
where $\nabla_{\!\omega}$ denotes the gradient with respect to angular derivatives only. In particular,
\[
\mathsf D_\alpha^* \left[z_1\,\D z_2 \right]=-\,\D z_1 \cdot \D z_2 + z_1\,\mathsf D_\alpha^*\(\D z_2\)
\]
with
\[
-\,\mathsf D_\alpha^*\(\D z_2\)=\frac{\alpha^2}{s^{n-1}}\,\frac\partial{\partial s}\(s^{n-1}\,\frac{\partial z_2}{\partial s}\)+\frac1{s^2}\,\Delta_\omega z_2
\]
where $\Delta_\omega$ represents the Laplace-Beltrami operator acting on $\omega\in\S^{d-1}$.

\medskip The advantage of resorting to this change of variables is that we can transform a problem with two different weights $ |x|^{-\gamma}$ and $ |x|^{-\beta} $ into a problem with two weights that are equal to $ |x|^{n-d}$. It is remarkable that Barenblatt-type stationary solutions~\eqref{Barenblatt-intro2} are transformed into the standard Barenblatt profiles
\[
\mathcal B=\(C+|x|^2\)^\frac1{m-1}\quad\forall\,x\in\RR^d\,.
\]
Details on the change of variables can be found in~\cite[Section~2.3]{BDMN2016a}. With regards to the purpose of this Appendix, the main interest of the change of variables is that it allows to use standard intrinsic cylinders. Given $ (t_0,x_0) \in \RR^+ \times \RR^d $ and $r>0$, let
\[\label{def.intr.cyl}
\begin{aligned}
Q_r(t_0,x_0) & :=\left\{(t,x) \in \RR^+ \times \RR^d :\,t_0-2\,r^2 <t<t_0\,, \ |x-x_0|<2\,r \right\}, \\
Q^+_r(t_0,x_0) &:=\left\{(t,x) \in \RR^+ \times \RR^d :\,t_0-\tfrac14\,r^2 < t < t_0\,,\ |x-x_0|<\tfrac12\,r \right\}, \\
Q^-_r(t_0,x_0) &:=\left\{(t,x)\in \RR^+ \times \RR^d :\,t_0-\tfrac78\,r^2 < t < t_0-\tfrac58\,r^2\,, \ |x-x_0|<\tfrac12\,r \right\}.
\end{aligned}
\]
%---------------------------------------------------------------------
\begin{figure}[ht]
\hspace*{-6pt}\includegraphics[width=6.25cm]{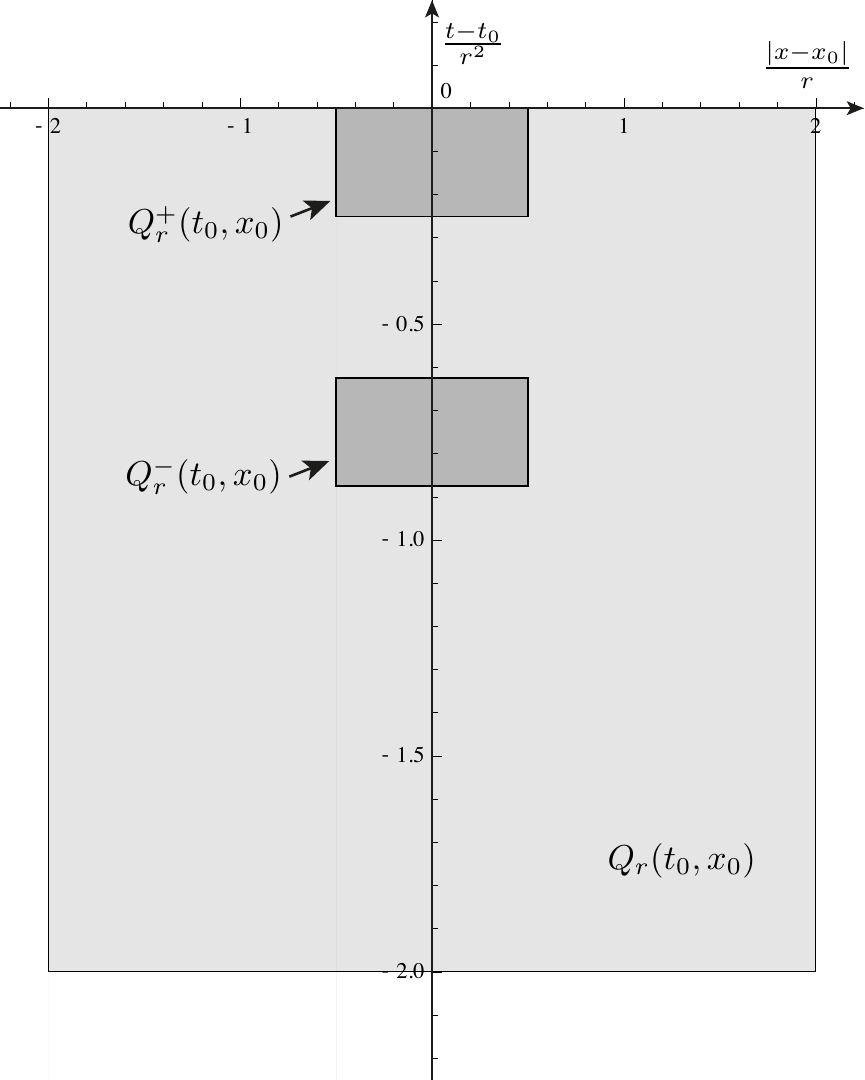}
\caption{\label{Fig2} The intrinsic cylinders $Q_r(t_0,x_0)$, $Q^+_r(t_0,x_0)$ and $Q^-_r(t_0,x_0)$.}
\end{figure}
%---------------------------------------------------------------------
See Fig.~\ref{Fig2}. As a straightforward consequence of the above definitions, there holds $Q_{r/4}(t_0,x_0)\subset Q^+_r(t_0,x_0)$. The above cylinders are the same as the classical parabolic cylinders: having same weights gives the same scaling properties as in the non-weighted case, as first remarked in~\cite{ChSe85}.

\medskip Our aim here is to study the local H\"older regularity for solutions to a weighted linear problem of the form
\be{eq.lin.ab}
u_t+\,\mathsf D_\alpha^*\Big[\,\mathsf a\,(\D u+\mathsf B\,u) \Big]=0\quad\mbox{in}\quad\RR^+ \times \RR^d
\ee
for some functions $\mathsf a$ and $\mathsf B$ which depend on $(t,x)\in\RR^+\times\RR^d$. By following the ideas of F.~Chiarenza and R.~Serapioni in~\cite{ChSe85}, we start by establishing a parabolic Harnack inequality, through a weighted Moser iteration.
%---------------------------------------------------------------------
\begin{Prop}[A parabolic Harnack inequality]\label{Thm.Harnack.lin} Assume that $\mathsf a$ is locally bounded and bounded away from zero and that $\mathsf B$ is locally bounded in $\RR^+\times\RR^d$. Let $d\ge2$, $\alpha>0$ and $n>d$. If $u$ is a bounded positive solution of~\eqref{eq.lin.ab}, then for all $(t_0,x_0)\in\RR^+\times\RR^d$ and $r>0$ such that $Q_r(t_0,x_0)\subset\RR^+\times B_1$, we have
\[\label{Thm.Harnack.lin.1}
\sup_{Q_r^-(t_0,x_0)} u \le H \inf_{Q_r^+(t_0,x_0)} u\,.
\]
The constant $H>1$ depends only on the local bounds on the coefficients $\mathsf a$, $\mathsf B$ and on $d$, $\alpha$, and $n$.\end{Prop}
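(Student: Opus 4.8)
The plan is to mimic the classical Moser iteration scheme, but adapted to the weighted differential operator $\mathsf D_\alpha$ and the measure $|x|^{n-d}\,\rd x$, exactly as F.~Chiarenza and R.~Serapioni did in~\cite{ChSe85} for degenerate parabolic equations with $A_2$-weights. The crucial structural point, already emphasized before the statement, is that after the change of variables $s=r^\alpha$ \emph{both} weights have collapsed to the \emph{same} power $|x|^{n-d}$; consequently the intrinsic cylinders $Q_r$, $Q_r^\pm$ scale precisely as ordinary parabolic cylinders, and the weight $\mu(\rd x):=|x|^{n-d}\,\rd x$ is doubling and satisfies a Poincaré inequality on balls (it is an $A_2$-Muckenhoupt weight for $n>d\ge2$, or more simply a power weight with a weighted Sobolev inequality at our disposal). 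So the whole machinery of~\cite{ChSe85} applies once the weighted Sobolev/Poincaré inequalities relative to $\mu$ are in place.

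\textbf{Step 1: weighted energy estimates.} First I would test the equation~\eqref{eq.lin.ab} against $\eta^2\,u^{p-1}$ (for $p\neq 0$) and against $\eta^2\,\log u$, where $\eta$ is a cut-off supported in an intrinsic cylinder. Using that $\mathsf a$ is bounded above and below and $\mathsf B$ is bounded, together with Young's inequality to absorb the first-order term $\mathsf a\,\mathsf B\,u$ and the cut-off error terms, one obtains the Caccioppoli-type inequalities controlling $\iint \eta^2\,|\D(u^{p/2})|^2\,\mu\,\rd x\,\rd t$ and $\sup_t\int\eta^2\,u^p\,\mu\,\rd x$ in terms of lower-order quantities. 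Because $\mathsf B$ is only \emph{locally} bounded, all constants here depend on the local bounds of $\mathsf a$ and $\mathsf B$ on the relevant cylinder; this is where the dependence claimed in the statement enters.

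\textbf{Step 2: from energy to reverse Hölder (Moser iteration).} Combining the energy estimates with the weighted parabolic Sobolev inequality — interpolating the $\mathrm L^\infty_t\mathrm L^2_x(\mu)$ bound with the $\mathrm L^2_t\dot{\mathrm H}^1_x(\mu)$ bound and invoking the Sobolev embedding for $\mu$ — yields the standard self-improving inequality $\|u\|_{\mathrm L^{\chi p}(Q'')}\le (C/(r-r')^{\kappa})^{1/p}\,\|u\|_{\mathrm L^{p}(Q')}$ with a gain factor $\chi>1$ depending only on $d$, $\alpha$, $n$. Iterating over a sequence of shrinking cylinders gives, for $p>0$, $\sup_{Q_r^-}u\le C\,\big(\fint_{Q_r}u^{p_0}\,\rd\mu\,\rd t\big)^{1/p_0}$ for some small $p_0>0$, and symmetrically (iterating with negative powers, using positivity of $u$ so that $u^{-1}$ is also a subsolution of a similar equation) $\inf_{Q_r^+}u\ge c\,\big(\fint_{Q_r}u^{-p_0}\,\rd\mu\,\rd t\big)^{-1/p_0}$.

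\textbf{Step 3: the crossover lemma (Bombieri–Giusti / John–Nirenberg).} The remaining gap is to connect the positive and negative small moments of $u$ across the ``time shift'' between $Q_r^+$ and $Q_r^-$. This is done via the logarithmic estimate from Step~1: testing against $\eta^2/u$ shows that $\log u$ has bounded oscillation in a parabolic-BMO sense with respect to $\mu$, and the Bombieri–Giusti lemma (a parabolic John–Nirenberg argument, valid in spaces of homogeneous type, which $(\mathbb R^+\times\mathbb R^d,\mu\otimes\rd t)$ is) yields $\big(\fint_{Q_r^-}u^{p_0}\big)^{1/p_0}\le H'\,\big(\fint_{Q_r^+}u^{-p_0}\big)^{-1/p_0}$. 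Chaining the three displays gives the Harnack inequality with $H$ depending only on $d$, $\alpha$, $n$ and the local bounds on $\mathsf a$, $\mathsf B$, as asserted.

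\textbf{Main obstacle.} The genuinely delicate point is \emph{not} any single inequality but checking that every ingredient that is classically invoked for the Lebesgue measure — the parabolic Sobolev embedding, the doubling property, and above all the parabolic John–Nirenberg / Bombieri–Giusti lemma — holds \emph{with the weight $\mu=|x|^{n-d}\,\rd x$} and \emph{uniformly down to and across the origin}, which is precisely the singular point of the weight. Since $n>d$, $\mu$ is a locally finite doubling measure and an $A_2$ weight, so in principle all of this follows from the abstract theory on spaces of homogeneous type; but verifying the constants are uniform as the cylinder approaches $x=0$ (rather than blowing up) is the heart of the matter and is exactly the adaptation of~\cite{ChSe85} that the authors defer to this Appendix. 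I would therefore isolate, as a preliminary lemma, the weighted Sobolev inequality $\|f\|_{\mathrm L^{2\chi}(B,\mu)}\le C\,\|\D f\|_{\mathrm L^2(B,\mu)}$ (for $f$ compactly supported, $\chi=\tfrac{N}{N-2}$ with $N$ the ``dimension'' attached to $\mu$, here $N=n$), prove it by the usual representation-formula argument exploiting that $|x|^{n-d}$ is radial and behaves like the Jacobian of an $n$-dimensional space, and then feed it into the otherwise routine Moser scheme.
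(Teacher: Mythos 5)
Your proposal follows the same overall route as the paper: a weighted Moser iteration \`a la Chiarenza--Serapioni, with energy estimates, reverse H\"older iteration, and a Bombieri--Giusti crossover, and you correctly single out the weighted Sobolev inequality for $\mathsf D_\alpha$ with respect to the measure $|x|^{n-d}\,\rd x$ as the one genuinely nontrivial ingredient. The place where you and the paper diverge is precisely on how that inequality is obtained, and here your sketch is weaker than what the paper does.

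The paper observes that the needed inequality,
\[
\left(\int_{\R^d}|w|^{\frac{2n}{n-2}}\,|x|^{n-d}\,\rd x\right)^{\frac{n-2}{n}}\le \mathsf K_{n,\alpha}\int_{\R^d}|\mathsf D_\alpha w|^2\,|x|^{n-d}\,\rd x\,,
\]
is nothing other than the \emph{critical} Caffarelli--Kohn--Nirenberg inequality~\eqref{CKN} (with $p=p_\star$) rewritten after the change of variables $s=r^\alpha$; it is scale-invariant, holds for all admissible parameters, and the constant is explicitly controlled via~\cite{DEL2015}. This is both the cleaner and the safer route. Your proposal instead offers two alternative justifications, neither of which is quite watertight. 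First, you invoke Muckenhoupt $A_2$ theory, but $|x|^{n-d}$ is $A_2$ on $\R^d$ only when $0<n<2d$, whereas the Proposition assumes only $n>d$; for the parameters allowed by~\eqref{parameters-1} one can indeed have $n\ge 2d$ (e.g.~for $\gamma$ sufficiently negative with $\beta$ close to $\gamma-2$), so the $A_2$ framing does not cover the whole range. Second, your representation-formula fallback treats $|x|^{n-d}$ as ``the Jacobian of an $n$-dimensional space,'' which is accurate for \emph{radial} functions but glosses over the fact that the angular variable still lives on $\S^{d-1}$ and that $\mathsf D_\alpha$ mixes radial and angular derivatives with different coefficients; the Moser iteration needs the inequality for general (non-radial) test functions, so this heuristic does not directly produce the estimate. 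Recognizing the inequality as a reparametrized~\eqref{CKN} sidesteps both of these issues at once, which is precisely the ``main adaptation'' the paper's proof emphasizes. The rest of your outline (Caccioppoli, iteration, logarithmic estimate and parabolic John--Nirenberg, time translation between $Q_r^-$ and $Q_r^+$, absorption of the lower-order term $\mathsf B\,u$) matches the paper's intent and is correctly stated.
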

%---------------------------------------------------------------------
\begin{proof} The proof follows the lines of~\cite[Theorem~2.1]{ChSe85} with minor modifications. Let us emphasize the main adaptations. We observe that a critical Caffarelli-Kohn-Nirenberg inequality can be rewritten after the change of variables $s=r^\alpha$ as
\[
\(\int_{\R^d}|w|^\frac{2\,n}{n-2}\,|x|^{n-d}\,dx\)^\frac{n-2}n\le\mathsf K_{n,\alpha}\int_{\R^d}|\mathsf D_\alpha w|^2\,|x|^{n-d}\,dx\quad\forall\,w\in C^\infty_0(\RR^d)
\]
and is actually scale invariant. See~\cite[Inequality~3.2]{DEL2015} for details, including symmetry issues and the computation of $\mathsf K_{n,\alpha}$ in the symmetry range. This inequality plays the same role as the one of~\cite[Lemma~1.1]{ChSe85}. Then the proof follows upon replacing $\nabla$ by $\D$. The term $\mathsf B\,u$ is in fact of lower order, since it is locally bounded: it can easily be reabsorbed into the energy estimates. By translating the intrinsic cylinders with respect to~$t$ by $r^2$, we achieve the conclusion.\end{proof}

The Harnack inequality of Proposition~\ref{Thm.Harnack.lin} implies a H\"older continuity, by adapting the classical method \emph{\`a la De Giorgi} to our weighted framework.
%---------------------------------------------------------------------
\begin{Cor}[H\"older regularity at the origin I]\label{Thm.C.alpha.lin}
Under the same assumptions as in Proposition~\ref{Thm.Harnack.lin}, there exist $\etanu\in (0,1)$ and $\mathcal K>0$ such that
\[\label{Thm.C.alpha.lin.1}
|u(t)|_{C^\etanu(B_{1/2})} \le\mathcal K \left\| u \right\|_{\LL^\infty((t+2,t+3)\times B_4)} \quad \forall\,t \ge 1\,,
\]
where $\etanu$ and $\mathcal K$ depend only on the constant $H>1$ of Proposition~\ref{Thm.Harnack.lin}.
\end{Cor}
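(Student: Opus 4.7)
The plan is to derive Hölder continuity from the parabolic Harnack inequality of Proposition~\ref{Thm.Harnack.lin} by the classical De Giorgi--Nash--Moser oscillation-reduction argument. Because the intrinsic cylinders $Q_r^\pm$ used in Proposition~\ref{Thm.Harnack.lin} scale exactly like ordinary parabolic cylinders---this was the whole purpose of the change of variables $s=r^\alpha$---the textbook argument goes through with essentially no modification, and in fact delivers a full parabolic Hölder modulus in space and time; freezing the time variable then recovers the spatial statement in the corollary.

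In more detail, I would fix a base point $(t_0,x_0)$ in a smaller cylinder compactly contained in $(t+2,t+3)\times B_4$, and set $Q_k:=Q_{r_k}(t_0,x_0)$ with $r_k=r_0\,4^{-k}$ chosen so that $Q_0$ still lies inside that region. Writing $M_k:=\sup_{Q_k}u$, $m_k:=\inf_{Q_k}u$, $\omega_k:=M_k-m_k$, I would apply Proposition~\ref{Thm.Harnack.lin} to the two nonnegative functions $M_k-u$ and $u-m_k$ on the smaller cylinder $Q_{k+1}$, and sum the resulting estimates at the level of $Q_{k+1}^\pm$; this would yield an oscillation contraction of the form
\[
\omega_{k+1}\;\le\;\theta\,\omega_k+C\,r_k\,\|u\|_{\LL^\infty(Q_0)},\qquad \theta:=\tfrac{H-1}{H+1}\in(0,1),
\]
where the additive term absorbs the fact that $M_k-u$ and $u-m_k$ satisfy the same equation as $u$ only up to a zero-order source. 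Iterating this linear recurrence gives $\omega_k\le C\,r_k^{\nu}\,\|u\|_{\LL^\infty(Q_0)}$ with $\nu:=-\log_4\theta\in(0,1)$, which is a parabolic Hölder estimate at $(t_0,x_0)$; a finite covering of $(t+2,t+3)\times B_{1/2}$ by such base cylinders, and restriction to a fixed time, would produce the spatial bound $|u(t)|_{C^\nu(B_{1/2})}\le\mathcal K\,\|u\|_{\LL^\infty((t+2,t+3)\times B_4)}$ stated in the corollary.

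The main obstacle I anticipate is that $M_k-u$ and $u-m_k$ are \emph{not} solutions of the homogeneous equation~\eqref{eq.lin.ab}: they satisfy the same equation with an inhomogeneous right-hand side of order $|M_k|$ or $|m_k|$ coming from the zero-order term $\mathsf B\,u$. To make the Harnack step rigorous, I would lean on an inhomogeneous version of Proposition~\ref{Thm.Harnack.lin}, obtained by re-running the Moser iteration of~\cite{ChSe85} with a bounded source entered linearly on the right and absorbed, via Hölder and Cauchy--Schwarz, into the critical weighted Caffarelli--Kohn--Nirenberg inequality that already underpins Proposition~\ref{Thm.Harnack.lin}; this produces precisely the additive $C\,r_k\,\|u\|_{\LL^\infty}$ in the recurrence above. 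The verification in the weighted setting adds nothing beyond checking that the weight $|x|^{n-d}$ is the correct Muckenhoupt-type weight for the scale-invariant Sobolev inequality, which is exactly what the proof of Proposition~\ref{Thm.Harnack.lin} already relies on. An alternative route, if one wishes to avoid re-proving Harnack with a source, is to apply Harnack directly to the shifted functions $M_k-u+c_k$ and $u-m_k+c_k$ for a suitably chosen constant $c_k\propto r_k$, which amounts to the same additive error in the oscillation recurrence.
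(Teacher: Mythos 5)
Your proposal follows essentially the same route as the paper's proof: apply Proposition~\ref{Thm.Harnack.lin} to $M_{2r}-u$ and $u-m_{2r}$, sum the two resulting estimates to obtain an oscillation contraction ($\osc_{Q_{r/4}}u\le\tfrac{H-1}{H}\osc_{Q_{2r}}u$ in the paper's version), iterate via the standard lemma of Giusti, and finish with a covering argument. Two points of comparison. First, the paper simply calls $M_{2r}-u$ a ``nonnegative solution'' and does not address the zero-order term at all; your observation that $M_{2r}-u$ satisfies~\eqref{eq.lin.ab} only up to the divergence-form source $\mathsf D_\alpha^*\left[\,\mathsf a\,\mathsf B\,M_{2r}\right]$ is correct, and your remedy (an inhomogeneous Harnack inequality producing the additive term $C\,r_k\,\|u\|_{\LL^\infty}$ in the recurrence, absorbed by the corresponding version of the iteration lemma at the cost of a possibly smaller exponent) is the standard and legitimate way to make that step rigorous; so here you are being more careful than the sketch in the Appendix. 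Second, your final step has a gap in the time bookkeeping: covering $(t+2,t+3)\times B_{1/2}$ by base cylinders and restricting to a fixed time yields a bound on $|u(s)|_{C^\nu(B_{1/2})}$ for $s$ \emph{inside} that window, with the $\LL^\infty$ norm taken over cylinders that lie at or \emph{before} the evaluation time, whereas the statement controls $|u(t)|_{C^\nu(B_{1/2})}$ by the supremum over the strictly \emph{later} window $(t+2,t+3)$. The paper closes this by one further application of the Harnack inequality, $\sup_{Q_r^-}u\le H\inf_{Q_r^+}u\le H\sup_{Q_r^+}u$, which transports the $\LL^\infty$ norm forward in time from a neighbourhood of $t$ to $(t+2,t+3)\times B_4$; you should add this step to arrive at the inequality as stated.
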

%---------------------------------------------------------------------
\begin{proof} We fix $t_0 \ge 1$, $ r \in (0,1/2) $ and denote for simplicity $Q_{r}:=Q_r(t_0,0)$ and $Q_r^{\pm}:=Q_r^{\pm}(t_0,0)$. Let us introduce the following quantities:
\[
M_r :=\sup_{Q_r} u\,, \quad M_r^{\pm}:=\sup_{Q^{\pm}_r} u\,, \quad m_r:=\inf_{Q_r} u\,, \quad m_r^{\pm}:=\inf_{Q^{\pm}_r}u\,.
\]
We apply Proposition~\ref{Thm.Harnack.lin} to the nonnegative solution $M_{2r}-u$ to obtain
\[\label{Thm.C.alpha.lin.2}
M_{2r}-m_{r}^-=\sup_{Q_{r}^-}(M_{2r}-u) \le H\,\inf_{Q_{r}^+}(M_{2r}-u)=H\,(M_{2r}-M^+_{r})\,.
\]
Similarly, by using $u-m_{2r}$ we obtain the inequality $M_{r}^- - m_{2r}\le H\,(m^+_{r}-m_{2r})$ which, summed up with the previous inequality, gives
\[\label{Thm.C.alpha.lin.3}
H\,(M^+_{r}-m^+_{r}) + M_{r}^- - m_{r}^-\le (H-1)\,(M_{2r}-m_{2r})\,.
\]
Notice that we can always assume that $H>1$. Using $Q_{r/4}\subset Q^+_r$, we conclude that
\[\label{Thm.C.alpha.lin.5}
\osc_{Q_{r/4}}u\le \osc_{Q^+_r}u=M^+_{r}-m^+_{r} \le \frac{H-1}{H}\,(M_{2r}-m_{2r})=\frac{H-1}{H}\,\osc_{Q_{2r}}u\,.
\]
Without loss of generality we can assume that $H/(H-1)\le 8$: a well-known iteration technique (see, \emph{e.g.},~\cite[Lemma~6.1]{Giusti}) then shows that
\[\label{Thm.C.alpha.lin.5b}
\osc_{Q_{r}}u\le C\,(2\,r)^{\etanu}\,\osc_{Q_{1/2}}u\quad\forall\,r\in(0,1/2]\,,
\]
with $\etanu:=\log({H}/(H-1))/\log8\in(0,1)$ and $C>0$ depending only on $H$. A standard covering argument thus yields uniform H\"older continuity on smaller cylinders, namely
\[\label{Thm.C.alpha.lin.6}
|u|_{C^{\etanu,\etanu/2}(Q_{r})} \le 2^\etanu\,K\,\|u\|_{\LL^\infty(Q_{1/2})}\quad\forall\,r\in(0,1/4]\,,
\]
where $K>0$ is another constant that depends only on $H$ and we set
\[
|u|_{C^{\etanu,\etanu/2}(Q_{r})}:=\sup_{(t,x),\,(\tau,y) \in Q_{r}} \frac{\left| u(t,x)-u(\tau,y) \right|}{(|x-y|^2+|t-\tau|)^{\etanu/2}}\,.
\]
In particular we deduce that
\[\label{Thm.C.alpha.lin.1-proof}
| u(t_0) |_{C^\etanu(B_{1/2})} \le K \left\| u \right\|_{\LL^\infty((t_0-1/2, t_0) \times B_1)}\,.
\]
Now note that, as a trivial consequence of Proposition~\ref{Thm.Harnack.lin} (just replace the $\inf $ with the $\sup $ in the r.h.s.), there holds
\[
\left\| u \right\|_{\LL^\infty((t_0-1/2, t_0) \times B_1 )} \le H \left\| u \right\|_{\LL^\infty((t+2,t+3)\times B_4)}\,,
\]
which concludes the proof with $\mathcal K=K\,H$.\end{proof}

Since the change of variables $s=r^\alpha$ transforms H\"older functions into H\"older functions (but of course not $C^1$), as a direct consequence of Corollary~\ref{Thm.C.alpha.lin} we have an analogous result for the original (linear) equation.
%---------------------------------------------------------------------
\begin{Cor}[H\"older regularity at the origin II]\label{Thm.C.alpha.lin-2} Assume that $d$, $\beta$ and $\gamma$ comply with~\eqref{parameters-1}. If $u$ is a bounded positive solution of
\[
|x|^{-\gamma}\,u_t=\nabla \cdot \left[ |x|^{-\beta}\,a(t,x) \left( \nabla u + B(t,x)\,u \right) \right] \quad\mbox{in}\quad\RR^+ \times \RR^d\,,
\]
where $ a $ is locally bounded and bounded away from $0$, $B$ is locally bounded in $\RR^+ \times \RR^d$. Then there exist $\etanu\in (0,1)$ and $\mathcal K>0$ such that
\[\label{Thm.C.alpha.lin.orig}
| u(t) |_{C^\etanu(B_{2^{-1/\alpha}})} \le \mathcal K \left\| u \right\|_{\LL^\infty ((t+2, t+3) \times B_{2^{2/\alpha}})} \quad \forall\,t \ge 1\,,
\]
where $\etanu$ and $\mathcal K$ depend only on the local bounds on the coefficients $a$, $B$ and on $d$, $\gamma$, $\beta$. Here $\alpha$ is given by~\eqref{parameters}.\end{Cor}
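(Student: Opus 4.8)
The plan is to reduce the statement to Corollary~\ref{Thm.C.alpha.lin} via the change of variables $s=r^\alpha$ (equivalently $y=x\,|x|^{\alpha-1}$) already used in this Appendix and in~\cite[Section~3.3]{BDMN2016a}. Write $u$ in spherical coordinates $(t,r,\omega)$ and set $\tilde u(t,s,\omega):=u(t,s^{1/\alpha},\omega)$. Using $\partial_r=\alpha\,s^{(\alpha-1)/\alpha}\,\partial_s$, so that $\nabla_x u$ corresponds to $|x|^{\alpha-1}\,\D{\tilde u}$, one checks by a computation of the same nature as the ones recalled at the beginning of the Appendix (with $\nabla$ systematically replaced by $\D{}$) that $\tilde u$ solves a weighted linear equation of the form~\eqref{eq.lin.ab},
\[
\tilde u_t+\mathsf D_\alpha^*\big[\,\mathsf a\,\big(\D{\tilde u}+\mathsf B\,\tilde u\big)\,\big]=0\,,
\]
where $\mathsf a$ is $a$ composed with the inverse change of variables, up to a fixed positive constant, and $\mathsf B$ is the drift $B$ transported through $x\mapsto y$, multiplied by the explicit factor $|y|^{(1-\alpha)/\alpha}$. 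The decisive algebraic point, and the reason why $\alpha$ and $n$ are defined as in~\eqref{parameters}, is that the Jacobian of $x\mapsto y$ (a fixed power of $|x|$ times $\alpha$) turns \emph{both} of the \emph{a priori} distinct weights $|x|^{-\gamma}$, appearing in front of $u_t$, and $|x|^{-\beta}$, appearing in the elliptic part, into the \emph{same} weight $|y|^{n-d}$ --- precisely the weight for which $\mathsf D_\alpha^*$ is the formal adjoint of $\D{}$.

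I then verify that the transformed coefficients are admissible in the sense of Proposition~\ref{Thm.Harnack.lin}, hence of Corollary~\ref{Thm.C.alpha.lin}. Hypotheses~\eqref{parameters-1} are exactly what guarantees $\alpha>0$ (from $\beta>\gamma-2$) and $n>d$ (from $\beta<\tfrac{d-2}d\,\gamma$). Since $x\mapsto y$ is a homeomorphism of $\RR^d$ taking bounded sets to bounded sets, $\mathsf a$ inherits local boundedness and a positive local lower bound from $a$; and the factor $|y|^{(1-\alpha)/\alpha}$ multiplying the transported drift is locally bounded whenever $\alpha\le1$ --- in particular whenever $\gamma\ge0$, since then $\beta<\gamma$ --- while in the complementary range the specific form of the drift in the applications (in Lemma~\ref{lem:holdereg}, $B$ vanishes at the origin at a rate making $\mathsf B$ locally bounded, because $v$ is bounded and bounded away from zero near $0$) again yields a locally bounded $\mathsf B$. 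Granting this, Corollary~\ref{Thm.C.alpha.lin} applied to $\tilde u$ gives, for some $\etanu\in(0,1)$ and $\mathcal K>0$ depending only on the local bounds on $\mathsf a$, $\mathsf B$ and on $d$, $\alpha$, $n$,
\[
|\tilde u(t)|_{C^\etanu(B_{1/2})}\le\mathcal K\,\|\tilde u\|_{\LL^\infty((t+2,t+3)\times B_4)}\qquad\forall\,t\ge1\,.
\]

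Finally I transfer this estimate back to $u$. The map $r\mapsto s=r^\alpha$ is smooth and invertible on $(0,\infty)$ and, near the origin, bi-H\"older: one of $r\mapsto r^\alpha$ and $s\mapsto s^{1/\alpha}$ is locally Lipschitz and the other locally H\"older continuous. Consequently composition with this change of variables sends $C^\etanu$ functions to $C^{\etanu'}$ functions for some $\etanu'\in(0,1)$ depending on $\etanu$ and $\alpha$ (though not to $C^1$ functions, which is why the change of variables cannot be avoided). Since $s<1/2$ corresponds to $r<2^{-1/\alpha}$ and $s<4$ to $r<2^{2/\alpha}$, the previous display becomes
\[
|u(t)|_{C^{\etanu'}(B_{2^{-1/\alpha}})}\le\mathcal K'\,\|u\|_{\LL^\infty((t+2,t+3)\times B_{2^{2/\alpha}})}\qquad\forall\,t\ge1\,,
\]
which is the assertion after renaming $\etanu'$ and $\mathcal K'$. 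The main obstacle is not a single deep step but rather the careful bookkeeping of the first paragraph --- keeping track of how the two weights, the Jacobian and the coefficients recombine under $x\mapsto y$ --- together with the verification in the second paragraph that the transported drift $\mathsf B$ still satisfies the hypotheses of Corollary~\ref{Thm.C.alpha.lin}; the rest is a direct appeal to results already proved in this Appendix.
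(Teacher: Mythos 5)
Your argument is precisely the one the paper uses: the paper's proof is the one-sentence remark preceding the statement, which invokes the change of variables $s=r^\alpha$ to reduce to Corollary~\ref{Thm.C.alpha.lin} and observes that this map transforms H\"older functions into H\"older functions (but of course not $C^1$). Your write-up fills in the details the paper omits --- in particular, you correctly flag that the transported drift $\mathsf B=|x|^{1-\alpha}B$ is not automatically locally bounded when $\alpha>1$ (possible only when $\gamma<0$ and $\beta>\gamma$), so that the blind reduction requires $B$ to vanish suitably at the origin, which you then verify for the drifts actually arising in the proof of Lemma~\ref{lem:holdereg}; the paper passes over this point silently.
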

%---------------------------------------------------------------------

%%%%%%%%%%%%%%%%%%%%%%%%%%%%%%%%%%%%%%%%%%%%%%%%%%%%%%%%%%%%%%%%%%%%%%
%%%%%%%%%%%%%%%%%%%%%%%%%%%%%%%%%%%%%%%%%%%%%%%%%%%%%%%%%%%%%%%%%%%%%%
\section*{Acknowledgments} This research has been partially supported by the projects \emph{STAB} (J.D., B.N.) and \emph{Kibord} (J.D.) of the French National Research Agency (ANR). M.B.~has been funded by Project MTM2011-24696 and MTM2014-52240-P (Spain). This work has begun while M.B.~and M.M.~were visiting J.D.~and B.N.~in 2014. M.B.~thanks the University of Paris 1 for inviting him. M.M.~has been partially funded by the National Research Project ``Calculus of Variations'' (PRIN 2010-11, Italy) and by the ``Universit\`a Italo-Francese / Universit\'e Franco-Italienne'' (Bando Vinci 2013). J.D.~also thanks the University of Pavia for support.

\smallskip\noindent {\sl\small\copyright~2016 by the authors. This paper may be reproduced, in its entirety, for non-commercial purposes.}
%%%%%%%%%%%%%%%%%%%%%%%%%%%%%%%%%%%%%%%%%%%%%%%%%%%%%%%%%%%%%%%%%%%%%%
%%%%%%%%%%%%%%%%%%%%%%%%%%%%%%%%%%%%%%%%%%%%%%%%%%%%%%%%%%%%%%%%%%%%%%
%\bibliographystyle{AIMS}\bibliography{../BDMN}
\providecommand{\href}[2]{#2}
\providecommand{\arxiv}[1]{\href{http://arxiv.org/abs/#1}{arXiv:#1}}
\providecommand{\url}[1]{\texttt{#1}}
\providecommand{\urlprefix}{URL }

%%%%%%%%%%%%%%%%%%%%%%%%%%%%%%%%%%%%%%%%%%%%%%%%%%%%%%%%%%%%%%%%%%%%%%
%%%%%%%%%%%%%%%%%%%%%%%%%%%%%%%%%%%%%%%%%%%%%%%%%%%%%%%%%%%%%%%%%%%%%%
\end{document}